\newcommand{\stkout}[1]{\ifmmode\text{\sout{\ensuremath{#1}}}\else\sout{#1}\fi}
\theoremstyle{plain}
\newtheorem*{assumption*}{\protect\assumptionname}
\theoremstyle{remark}
\newtheorem*{remark*}{\protect\remarkname}
\theoremstyle{plain}
\newtheorem{remark}{\protect\remarkname}
\theoremstyle{plain}
\newtheorem{theorem}{\protect\theoremname}%[section]
\theoremstyle{definition}
\newtheorem{definition}{\protect\definitionname}
\theoremstyle{plain}
\theoremstyle{plain}
\newtheorem{proposition}{\protect\propositionname}
\theoremstyle{plain}
\newtheorem{lemma}{\protect\lemmaname}
\theoremstyle{plain}
\providecommand{\assumptionname}{Assumption}
\providecommand{\corollaryname}{Corollary}
\providecommand{\definitionname}{Definition}
\providecommand{\lemmaname}{Lemma}
\providecommand{\propositionname}{Proposition}
\providecommand{\remarkname}{Remark}
\providecommand{\theoremname}{Theorem}
\newcommand*{\Scale}[2][4]{\scalebox{#1}{$#2$}}
\numberwithin{equation}{section}
\newenvironment{keywords}{
  \par\noindent
  \textbf{Keywords:}
}{
  \par
}
\newif\ifsiam
\title{Regularized Operator Extrapolation Method For Stochastic Bilevel Variational Inequality Problems
}
\author{
Mohammad Khalafi\thanks{mohamadk@smu.edu, Operations Research and Engineering Management, Southern Methodist University}
\hspace{6em}
Digvijay Boob\thanks{dboob@smu.edu, Operations Research and Engineering Management, Southern Methodist University}
}
\date{}
\begin{document}
\allowdisplaybreaks
\global\long\def\vertiii#1{\left\vert \kern-0.25ex  \left\vert \kern-0.25ex  \left\vert #1\right\vert \kern-0.25ex  \right\vert \kern-0.25ex  \right\vert }%
\global\long\def\matr#1{\bm{#1}}%
\global\long\def\til#1{\tilde{#1}}%
\global\long\def\wt#1{\widetilde{#1}}%
\global\long\def\wh#1{\widehat{#1}}%
\global\long\def\wb#1{\widebar{#1}}%
\global\long\def\mcal#1{\mathcal{#1}}%
\global\long\def\mbb#1{\mathbb{#1}}%
\global\long\def\mtt#1{\mathtt{#1}}%
\global\long\def\ttt#1{\texttt{#1}}%
\global\long\def\inner#1#2{\langle#1,#2\rangle}%
\global\long\def\binner#1#2{\big\langle#1,#2\big\rangle}%
\global\long\def\Binner#1#2{\Big\langle#1,#2\Big\rangle}%
\global\long\def\br#1{\left(#1\right)}%
%\global\long\def\norm#1{\left\Vert #1\right\Vert }%
\global\long\def\bignorm#1{\bigl\Vert#1\bigr\Vert}%
\global\long\def\Bignorm#1{\Bigl\Vert#1\Bigr\Vert}%
\global\long\def\setnorm#1{\Vert#1\Vert_{-}}%
\global\long\def\rmn#1#2{\mathbb{R}^{#1\times#2}}%
\global\long\def\deri#1#2{\frac{d#1}{d#2}}%
\global\long\def\pderi#1#2{\frac{\partial#1}{\partial#2}}%
\global\long\def\onebf{\mathbf{1}}%
\global\long\def\zero{\mathbf{0}}%

\global\long\def\norm#1{\lVert#1\rVert}%
\global\long\def\bnorm#1{\big\Vert#1\big\Vert}%
\global\long\def\Bnorm#1{\Big\Vert#1\Big\Vert}%

% All kinds of brackets
\global\long\def\brbra#1{\big(#1\big)}%
\global\long\def\Brbra#1{\Big(#1\Big)}%
\global\long\def\rbra#1{(#1)}%
\global\long\def\sbra#1{[#1]}%
\global\long\def\bsbra#1{\big[#1\big]}%
\global\long\def\Bsbra#1{\Big[#1\Big]}%
\global\long\def\cbra#1{\{#1\}}%
\global\long\def\bcbra#1{\big\{#1\big\}}%
\global\long\def\Bcbra#1{\Big\{#1\Big\}}%

% expectation
\global\long\def\grad{\nabla}%
\global\long\def\Expe{\mathbb{E}}%
\global\long\def\rank{\text{rank}}%
\global\long\def\range{\text{range}}%
\global\long\def\diam{\text{diam}}%
\global\long\def\epi{\text{epi }}%
\global\long\def\inte{\operatornamewithlimits{int}}%
\global\long\def\cov{\text{Cov}}%
\global\long\def\argmin{\operatornamewithlimits{argmin}}%
\global\long\def\argmax{\operatornamewithlimits{argmax}}%
\global\long\def\tr{\operatornamewithlimits{tr}}%
\global\long\def\dis{\operatornamewithlimits{dist}}%
\global\long\def\sign{\operatornamewithlimits{sign}}%
\global\long\def\prob{\mathbb{P}}%
\global\long\def\st{\operatornamewithlimits{s.t.}}%
\global\long\def\dom{\text{dom}}%
\global\long\def\diag{\text{diag}}%
\global\long\def\and{\text{and}}%
\global\long\def\st{\text{s.t.}}%
\global\long\def\Var{\operatornamewithlimits{Var}}%
\global\long\def\raw{\rightarrow}%
\global\long\def\law{\leftarrow}%
\global\long\def\Raw{\Rightarrow}%
\global\long\def\Law{\Leftarrow}%
\global\long\def\vep{\varepsilon}%
\global\long\def\dom{\operatornamewithlimits{dom}}%

\global\long\def\Lbf{\mathbf{L}}%

\global\long\def\Ffrak{\mathfrak{F}}%
\global\long\def\Gfrak{\mathfrak{G}}%
\global\long\def\gfrak{\mathfrak{g}}%
\global\long\def\Hfrak{\mathfrak{H}}%
\global\long\def\Ofrak{\mathfrak{O}}%
\global\long\def\sfrak{\mathfrak{s}}%
\global\long\def\vfrak{\mathfrak{v}}%
\global\long\def\xibar{\bar{\xi}}%
\global\long\def\Cbb{\mathbb{C}}%
\global\long\def\Ebb{\mathbb{E}}%
\global\long\def\Fbb{\mathbb{F}}%
\global\long\def\Nbb{\mathbb{N}}%
\global\long\def\Rbb{\mathbb{R}}%
\global\long\def\extR{\widebar{\mathbb{R}}}%
\global\long\def\Pbb{\mathbb{P}}%
\global\long\def\Acal{\mathcal{A}}%
\global\long\def\Bcal{\mathcal{B}}%
\global\long\def\Ccal{\mathcal{C}}%
\global\long\def\Dcal{\mathcal{D}}%
\global\long\def\Fcal{\mathcal{F}}%
\global\long\def\Gcal{\mathcal{G}}%
\global\long\def\Hcal{\mathcal{H}}%
\global\long\def\Ical{\mathcal{I}}%
\global\long\def\Kcal{\mathcal{K}}%
\global\long\def\Lcal{\mathcal{L}}%
\global\long\def\Mcal{\mathcal{M}}%
\global\long\def\Ncal{\mathcal{N}}%
\global\long\def\Ocal{\mathcal{O}}%
\global\long\def\Pcal{\mathcal{P}}%
\global\long\def\Ucal{\mathcal{U}}%
\global\long\def\Scal{\mathcal{S}}%
\global\long\def\Tcal{\mathcal{T}}%
\global\long\def\Xcal{\mathcal{X}}%
\global\long\def\Ycal{\mathcal{Y}}%
\global\long\def\Ubf{\mathbf{U}}%
\global\long\def\bt{\mathbf{t}}
\global\long\def\bx{\mathbb{x}}
\global\long\def\Pbf{\mathbf{P}}%
\global\long\def\Ibf{\mathbf{I}}%
\global\long\def\Ebf{\mathbf{E}}%
\global\long\def\Abs{\boldsymbol{A}}%
\global\long\def\Qbs{\boldsymbol{Q}}%
\global\long\def\Lbs{\boldsymbol{L}}%
\global\long\def\Pbs{\boldsymbol{P}}%
\global\long\def\gap{\text{Gap}}%
\global\long\def\dist{\text{dist}}%
\newcommand{\proj}{\textbf{proj}}
\newcommand{\prox}{\textbf{prox}}
\newcommand{\proximal}{\text{prox}}
\def\bx{\mathbf{x}}
\global\long\def\i{i}%
\global\long\def\Ibb{\mathbb{I}}

%\global\long\def\inprod#1#2{\left\langle #1,#2\right\rangle }%
\DeclarePairedDelimiterX{\inprod}[2]{\langle}{\rangle}{#1, #2}
\DeclarePairedDelimiter\abs{\lvert}{\rvert}
\DeclarePairedDelimiter{\bracket}{ [ }{ ] }
\DeclarePairedDelimiter{\paran}{(}{)}
\DeclarePairedDelimiter{\braces}{\lbrace}{\rbrace}
\DeclarePairedDelimiterX{\gnorm}[3]{\lVert}{\rVert_{#2}^{#3}}{#1}
\DeclarePairedDelimiter{\floor}{\lfloor}{\rfloor}
\DeclarePairedDelimiter{\ceil}{\lceil}{\rceil}

\global\long\def\tsum{{\textstyle {\sum}}}

\newcommand{\opconex}{\texttt{OpConEx}}
\newcommand{\sopconex}{\texttt{S-OpConEx}}
\newcommand{\stochsopconex}{\texttt{S-StOpConEx}}
\newcommand{\adopconex}{\texttt{AdLagEx}}
\newcommand{\gconex}{\texttt{GradConEx}}
\newcommand{\stopconex}{\texttt{StOpConEx}}
\newcommand{\fstopconex}{\texttt{F-StOpConEx}}
\newcommand{\aconex}{\texttt{Aug-ConEx}}
\newcommand{\augconex}{\text{Aug-ConEx}}
\newcommand{\iropex}{\texttt{R-OpEx}}
\newcommand{\textiropex}{\text{IR-OpEx}}
\maketitle

\begin{abstract}
The bilevel variational inequality (BVI) problem is a general model that captures various optimization problems, including VI-constrained optimization and equilibrium problems with equilibrium constraints (EPECs). 
    This paper introduces a first-order method for smooth or nonsmooth BVI with stochastic monotone operators at inner and outer levels. Our novel method, called Regularized Operator Extrapolation $(\iropex)$, is a single-loop algorithm that combines Tikhonov's regularization with operator extrapolation. This method needs only one operator evaluation for each operator per iteration and tracks one sequence of iterates. We show that $\iropex$ gives $\mathcal{O}(\epsilon^{-4})$ complexity in nonsmooth stochastic monotone BVI, where $\epsilon$ is the error in the inner and outer levels. Using a mini-batching scheme, we improve the outer level complexity to $\mathcal{O}(\epsilon^{-2})$ while maintaining the  $\mathcal{O}(\epsilon^{-4})$ complexity in the inner level when the inner level is smooth and stochastic. Moreover, if the inner level is smooth and deterministic, we show complexity of $\mathcal{O}(\epsilon^{-2})$. Finally, in case the outer level is strongly monotone, we improve to $\mathcal{O}(\epsilon^{-4/5})$ for general BVI and $\mathcal{O}(\epsilon^{-2/3})$ when the inner level is smooth and deterministic. To our knowledge, this is the first work that investigates nonsmooth stochastic BVI with the best-known convergence guarantees. We verify our theoretical results with numerical experiments. 
 \vspace{-2mm}
% \noindent {\bf Keywords:} Variational Inequality, Function Constraints, Stochastic first-order methods, Saddle-point problems with coupled constraints
\end{abstract}

\begin{keywords}
    Bilevel Variational Inequalities, First-order Methods, Nonsmooth Problems, Stochastic Methods.
\end{keywords}
\vspace{-1mm}
% In this paper, we present novel stochastic first-order methods which can handle function constrained VI (FCVI) problems under deterministic, stochastic (operator only), and fully-stochastic (both stochastic operator and function constraints) cases where both the operator and function constraints can be either smooth, nonsmooth, or the composition of smooth and nonsmooth components. We show convergence rates/sample complexities for the proposed algorithms 
\section{Introduction}
%\vspace{-3mm}
The variational inequality (VI) problem associated with the monotone operator $\Tilde{F}$ over the convex set $\Tilde{X}$ is formally given by $\text{VI}(\tilde{F},\Tilde{X})$ as below
\begin{equation}\label{eq:VI}
    \text{Find } x^* \in \Tilde{X}: \quad \inprod{\Tilde{F}(x^*)}{x - x^*} \geq 0, \quad \forall x \in \Tilde{X}.
\end{equation} 
In this paper, we use the standard definition of VI in \eqref{eq:VI} and construct the following monotone bilevel variational inequality problem (BVI)
\begin{equation}\label{eq:bilevel-vi}
\begin{aligned}
    &\text{Find } && x^* \in X^*_F: &&\quad \inprod{H(x^*)}{x - x^*} \geq 0, &&\quad \forall x \in X^*_F, \\
    &\text{where } && x^*_F \in X: &&\quad \inprod{F(x^*_F)}{y - x^*_F} \geq 0, &&\quad \forall y \in X,
\end{aligned}
\end{equation}
where $X\subseteq \mathbb{R}^n$ is a closed convex set, $H:X\rightarrow \mathbb{R}^n$ and $F:X\rightarrow \mathbb{R}^n$ are possibly discontinuous monotone operators. We denote \textit{outer} and \textit{inner} VI problems as $\text{VI}(H,X^*_F)$ and $\text{VI}(F,X)$, respectively. BVIs have attracted a lot of interest as they encompass various classes of problems, such as the optimization
problem with variational inequality constraints, EPECs, complementarity problems, and optimal selection problems\cite{luo1996mathematical,mordukhovich2004equilibrium,beck2014first,kaushik2021method}. Considering both levels to be data-driven, i.e., the operators have the form of $F(x) =\Ebb_\xi[F(x,\xi)]$ and $H(x) = \Ebb_\zeta[H(x,\zeta)]$ for some random variables $\xi,\zeta$ defined on appropriate probability spaces, the problems above have numerous applications in machine learning such as model selection \cite{kunapuli2008classification}, hyperparameter tuning \cite{franceschi2018bilevel,feurer2019hyperparameter,lorraine2020optimizing}, lexicographic optimization \cite{jiang2023conditional}, meta-learning \cite{kim2025stochastic}, and sparse feature selection\cite{poon2021smooth}. There are some key challenges regarding BVIs. 
First, it is hard to project on the feasible set $X^*_F$ of the outer VI problem in many cases. Therefore, popular projection-based approaches, including projected gradient, extragradient (EG) \cite{korpelevich1976extragradient, nemirovski2004prox}, and optimistic gradient descent ascent \cite{Mokhtari2020}, are not applicable directly to the VI$(H, X^*_F)$. % since these methods require projection onto the solution set of the inner level. In fact, it appears no algorithm outputs the entire solution set \cite{feinstein2024characterizing}. 
Second, consider $H(x) = \grad f(x)$ for some convex function $f$; thus \eqref{eq:bilevel-vi} reduces to a VI-constrained convex minimization problem. However, unlike the usual bilevel optimization problems where we have finitely many functional constraints at the inner level, the corresponding constraint set is characterized by infinitely many inequalities. Hence, we cannot apply methods based on Lagrangian relaxation and  Karush–Kuhn–Tucker (KKT) conditions as in functional-constrained problems. In the following, we present a literature review on VIs and BVIs. 
\\
\textbf{Related work:}
\textbf{Variational inequalities.} The VI problem was first introduced by \citet{minty1962monotone} and \citet{stampacchia1964formes}. Since then, VIs have played a vital role in different engineering problems \cite{facchinei2003finite,pang2010design,shanbhag2013stochastic}. One popular solution method for the usual VIs is EG introduced by \citet{korpelevich1976extragradient} and developed in \cite{nemirovski2004prox,censor2011subgradient,iusem2011korpelevich,juditsky2011solving,iusem2017extragradient,chen2017accelerated}. In a seminal work, \citet{nemirovski2004prox} introduced a generalized EG-type algorithm called mirror-prox and showed $\mathcal{O}(\epsilon^{-1})$ complexity for smooth and deterministic VIs which improved the existing $\mathcal{O}(\epsilon^{-2})$ rate in projected gradient methods. This is the optimal complexity based on the lower bounds of first-order methods in bilinear saddle-point problems provided by \citet{ouyang2021lower}. Further, \citet{nesterov2006solving} obtained linear convergence for the strongly monotone VI problem. In addition, more simplistic algorithms such as projected reflected gradient methods \cite{malitsky2015projected} and operator extrapolation \cite{kotsalis2022simple} were used for solving monotone VIs. Recently, an operator constraint extrapolation method \cite{boob2024firstordermethodsstochasticvariational} was introduced for solving VI problems with complex functional constraints. Unlike EG methods, these algorithms need only one projection onto the ``simple" set $X$ in each iteration and track a single sequence of iterates. The VI problem becomes extremely challenging in the stochastic setting where we cannot access the operator's exact evaluation. \citet{jiang2008stochastic} established stochastic approximation (SA) to obtain the asymptotic convergence for the stochastic VIs with monotone and smooth operators. Later, papers on merely monotone and non-Lipschitz stochastic VIs were done based on the SA method \cite{juditsky2011solving,koshal2012regularized,yousefian2017smoothing}. The extended version of the mirror-prox method exhibited worst-case complexity of $\mathcal{O}(\epsilon^{-2})$ \cite{juditsky2011solving}. \citet{yousefian2014optimal} achieved $\mathcal{O}(\epsilon^{-1})$ complexity for stochastic VIs under weak sharpness assumption. \citet{boob2024firstordermethodsstochasticvariational} proved the optimal complexity of $\mathcal{O}(\epsilon^{-2})$ for VI problems with stochastic operator and expectation function constraints.
\\
\textbf{Bilevel variational inequalities.} The existing literature on BVIs and VI-constrained optimization problems is scarce. Only a few contributions have been made \cite{xu2004viscosity,yamada2011minimizing,facchinei2014vi,yousefian2017smoothing,kaushik2021method,kaushik2023incremental,samadi2025improved}. \citet{yamada2011minimizing} proposed a \textit{hybrid steepest descent method} (HSDM) to solve a nonsmooth bilevel optimization problem. \citet{xu2004viscosity} presented a sequential averaging method on the BVI problem. However, neither of those works has provided any convergence rate in inner or outer levels. Few works on BVIs and VI-constrained optimization provide explicit complexity results for optimality (outer-level) and feasibility (inner-level) gaps. \citet{kaushik2021method}  showed  $\mathcal{O}(\epsilon^{-4})$ convergence rate for optimality and feasibility gaps for a convex VI-constrained optimization problem. In the follow-up work, \citet{samadi2025improved} improved the rate to $\mathcal{O}(\epsilon^{-2})$ for a deterministic smooth BVI problem. To our knowledge, \cite{samadi2025improved} is the most recent work that constructs optimality and feasibility gaps for a bilevel variational inequality problem.
\\
\textbf{Regularization and gradient-based methods.} As discussed before, BVIs are difficult to solve given that (i) it is difficult to project onto the feasible set $X^*_F$, and (ii) $X^*_F$ consists of infinitely many functional constraints. Indeed, it implies the Lagrangian dual variable must have infinite dimension, which is impractical. A simple idea to address these challenges stems from the celebrated Tikhonov's regularization method introduced in \cite{tikhonov1963solution} where one uses iterative methods to solve the inner and outer problems in \eqref{eq:bilevel-vi} simultaneously. In the context of BVIs, Tikhonov's regularization uses an alternative operator ${O}(\cdot,\eta):=F(\cdot) + \eta H(\cdot) $  with regularization parameter $\eta>0$. \citet{tikhonov1963solution} showed that one can solve a `simple' bilevel optimization problem by obtaining the optimal solution of ${O}(\cdot,\eta^*)$ over the set $X$. However, since $\eta^*$ is not known, one can  use ${O}(\cdot,\eta_k)$ for a sequence $\{\eta_k\}_{k\geq 1}$ satisfying $\eta_k \rightarrow 0$ . Using $\eta_k \rightarrow 0$ and $\tsum_{k=1}^\infty \eta_k = \infty$, \citet{cabot2005proximal} and \citet{solodov2007explicit} proposed a projected gradient method and showed asymptotic convergence. Motivated by regularization-based approaches, some works combined standard first-order methods, such as block coordinate or extragradient, with an iteratively regularized scheme to solve deterministic BVI\cite{kaushik2021method,samadi2025improved}. A few papers address more general bilevel optimization problems. \citet{solodov2007bundle} used a bundle method for nonsmooth bilevel convex optimization problems to obtain asymptotic convergence. Recently, \citet{doron2023methodology} developed the ITerative Approximation
and Level-set EXpansion (ITALEX) approach with explicit convergence rates for inner and outer levels.\\
\textbf{Contributions:}
In this paper, we introduce a new method based on operator extrapolation \cite{kotsalis2022simple,boob2024firstordermethodsstochasticvariational} and Tikhonov's regularization for a general class of bilevel variational inequality problems where both operators $H(\cdot)$ and $F(\cdot)$ are nonsmooth and stochastic. We summarize our contributions below. 

\textbf{First}, we provide an explicit and best-known convergence rate for optimality and feasibility gaps for stochastic nonsmooth BVIs. Among the handful of works that measure suboptimality and infeasibility of the solutions \cite{kaushik2021method,kaushik2023incremental,samadi2025improved}, none of them study a fully-stochastic BVI problem with nonsmooth operators.
Specifically, we show $\mathcal{O}(\epsilon^{-4})$ oracle complexity for a general nonsmooth BVI.\\
\textbf{Second}, in case we have a smooth stochastic operator $F$, we show $\mathcal{O}(\epsilon^{-2})$ complexity in operator $H$ by doing mini-batching in operator $F$ and maintaining $\mathcal{O}(\epsilon^{-4})$ complexity for $F$. We further improve the overall complexity of $\mathcal{O}(\epsilon^{-2})$ given that the operator $F$ is smooth and deterministic.\\
\textbf{Third}, considering operator $H$ to be strongly monotone, we improve to $\mathcal{O}(\epsilon^{-4/5})$ in the general case and $\mathcal{O}(\epsilon^{-2/3})$ in the case with smooth and deterministic $F(\cdot)$. \\
\textbf{Finally}, unlike the state-of-the-art methods such as \cite{samadi2025improved}, we do not need additional operator evaluation, which is inevitable in the EG-based approaches. This is inherently rooted in the operator extrapolation method. 
%\end{itemize}
In Table \ref{tab:comparison}, we compare the most relevant methods in BVIs, namely \cite{kaushik2021method,kaushik2023incremental,samadi2025improved} with our algorithm regarding the used assumptions and settings and convergence rates.
\begin{table}[h!]
\centering
%\vspace{-5mm}
\caption{Comparison of algorithms under various settings and convergence rates.}
  \begin{adjustbox}{width=\textwidth}
\begin{tabular}{|c|cc|cc|cc|cc|}
\hline
\multirow{3}{*}{\textbf{Algorithm}} 
& \multicolumn{4}{c|}{\textbf{Setting}} 
& \multicolumn{4}{c|}{\textbf{Operator Complexity}} \\
\cline{2-9}
& \multicolumn{2}{c|}{Stochastic Operator} 
& \multicolumn{2}{c|}{Nonsmooth Operator} 
& \multicolumn{2}{c|}{Monotone} 
& \multicolumn{2}{c|}{Strongly Monotone} \\
\cline{2-9}
& $F$ & $H$ & $F$ & $H$ 
& $F$ & $H$ & $F$ & $H$ \\
\hline
$\text{aRB-IRG}^{\ast}$\cite{kaushik2021method} & $\times$ & $\times$ & $\times$ & $\times$ 
& $\mathcal{O}(1/\epsilon^4)$ & $\mathcal{O}(1/\epsilon^4)$
& $-$ & $-$ \\
$\text{pair-IG}^{\dagger}$\cite{kaushik2023incremental} & $\times$ & $\times$ & $\times$ & $\times$ 
& $\mathcal{O}(1/\epsilon^4)$ & $\mathcal{O}(1/\epsilon^4)$
& $-$ & $-$ \\
$\text{IR-EG}^{\ddagger}$\cite{samadi2025improved} & $\times$ & $\times$ & $\times$ & $\times$ 
& $\mathcal{O}(1/\epsilon^2)$ & $\mathcal{O}(1/\epsilon^2)$
& $\mathcal{O}(\ln(1/\epsilon)/\epsilon)$ & $\mathcal{O}(\ln(1/\epsilon)/\epsilon)$ \\
$\iropex$ & $\checkmark$ & $\checkmark$ & $\checkmark$ & $\checkmark$ 
& $\mathcal{O}(1/\epsilon^4)$ & $\mathcal{O}(1/\epsilon^4)$
& $\mathcal{O}(1/\epsilon^{\tfrac{4}{5}})$ & $\mathcal{O}(1/\epsilon^{\tfrac{4}{5}})$ \\
$\iropex$ (Mini-baching) & $\checkmark$ & $\checkmark$ & $\times$ & $\checkmark$ 
& $\mathcal{O}(1/\epsilon^4)$ & $\mathcal{O}(1/\epsilon^2)$
& $-$ & $-$ \\
$\iropex$ & $\times$ & $\checkmark$ & $\times$ & $\checkmark$ 
& $\mathcal{O}(1/\epsilon^2)$ & $\mathcal{O}(1/\epsilon^2)$
& $\mathcal{O}(1/\epsilon^{\tfrac{2}{3}})$ & $\mathcal{O}(1/\epsilon^{\tfrac{2}{3}})$ \\
\hline
\end{tabular}
\end{adjustbox}
\label{tab:comparison}
\begin{minipage}{\textwidth} % Adjust width to match your table
\footnotesize
\raggedright % aligns text inside minipage to the left
$^\ast$ Averaging randomized block iteratively regularized gradient.\\
$^\dagger$ Projected averaging regularized incremental subgradient. \\
$^\ddagger$ Iteratively regularized extragradient.
\end{minipage}
\end{table}
%\vspace{-7mm}
\paragraph{Outline:}
In Section \ref{sec:prem}, we discuss the notation, preliminaries and assumptions.Section \ref{sec:algorithm} comprehensively describes $\iropex$ in detail to solve problem \eqref{eq:bilevel-vi}. Section \ref{sec:convegence-analysis-iropex} provides a unified convergence analysis of the $\iropex$ method for different cases, such as general BVI or BVI with a smooth inner level. We illustrate numerical experiments in Section \ref{sec:numerical} and mention the conclusion and research discussion in Section \ref{sec:conclude}. 
%\vspace{-3mm}
\section{Notation and preliminaries}\label{sec:prem}%\vspace{-3mm}
%\subsection{Notation}\label{sec:notation}
We use the following notation in this paper. We denote $[m]:\{1,\cdots, m\}$. A vector $x\in \mathbb{R}^n$ is a column vector, and its transpose is shown as $x^\top$. Furthermore, $\mathbb{R}^n_+$ represents the non-negative orthant of an $n$-dimensional Euclidean space. Euclidean is denoted as $\gnorm{\cdot}{}{}$ and the standard inner product is defined as $\inprod{\cdot}{\cdot}$. We say the operator $F:X\rightarrow \mathbb{R}^n$ is monotone on the convex set $X\subseteq \mathbb{R}^n$ if $\inprod{F(x)-F(y)}{x-y}\geq 0$ for any $x,y \in X$. Moreover, the operator $F$ is said to be $\mu_F$-strongly monotone if $\inprod{F(x)-F(y)}{x-y}\geq \mu\gnorm{x-y}{}{}$ for any $x,y\in X$. We denote Euclidean projection of a vector onto a closed convex set $X$ as $\proj_X(x)$, where $\proj_X(x) := \argmin_{y\in X} \gnorm{x-y}{}{}$. Consequently, we define the distance function $\text{dist}(x,X)$ for a given vector $x$ and closed convex set $X$ as $\text{dist}(x,X): = \min_{y\in X}\gnorm{x-y}{}{} = \gnorm{x- \proj_X(x)}{}{}$. The operator $F$ is said to be $L_F$-\textit{Lipschitz smooth} if $\gnorm{F(x) - F(y)}{}{} \le L_F\gnorm{x-y}{}{} $, for all $x,y\in X$. We will delve into the idea of smoothness and nonsmoothness later. We define the diameter of a compact set $X$ by $D_X : = \max_{x,y\in X} \tfrac{\gnorm{x-y}{}{}}{2}$.
%\subsection{Preliminaries}\label{sec:premil}\\
Now, we discuss definitions and assumptions used throughout this paper. First, we denote the solution set of \eqref{eq:bilevel-vi} as $X_H^*$ and assume it is nonempty. Next, we consider the general case where $F$ and $H$ are possibly composed of Lipschitz continuous and bounded discontinuous monotone operators, satisfying
\begin{subequations}
    \begin{equation}\label{eq:F-Lipschitz-property}
	\gnorm{F(x_1) - F(x_2)}{}{} \le L_F\gnorm{x_1-x_2}{}{} + M_F, \quad \forall x_1, x_2\in X,
\end{equation}
\begin{equation}\label{eq:H-Lipschitz-property}
    \gnorm{H(x_1) - H(x_2)}{}{} \le L_H\gnorm{x_1-x_2}{}{} + M_H, \quad \forall x_1, x_2\in X,
\end{equation}
\end{subequations}
where the $L_F,L_H$ terms stem from the Lipschitz continuous components and the $M_F,M_H$ correspond to the discontinuous components of operators $F,H$, respectively. If $M_F=0$ or $M_H=0$, the corresponding operator is Lipschitz continuous, which is known as a "smooth operator" in the VI literature. Consequently, one can classify the bilevel VI problem in \eqref{eq:bilevel-vi} into smooth and nonsmooth cases. The problem \eqref{eq:bilevel-vi} is a \textit{smooth BVI} if both operators $F$ and $H$ are smooth, i.e., $M_F = M_H=0$. If at least one of $M_F$ or $M_H$ is positive, then \eqref{eq:bilevel-vi} is a \textit{nonsmooth BVI} problem - the main focus of this paper. Apart from smoothness and nonsmoothness assumptions, we say the operators are bounded by $C_F$ and $C_H$ in the set $X$ and by $B_F$ and $B_H$ for the set $X^*_F$ as follows 
\begin{subequations}
    \begin{equation}\label{eq:boundedness-F}
    \gnorm{F(x)}{}{}\leq C_F, \forall x\in X, \quad \gnorm{F(x)}{}{}\leq B_F, \forall x\in X_F^*,
    \end{equation}
    \begin{equation}\label{eq:boundedness-H}
    \gnorm{H(x)}{}{}\leq C_H, \forall x\in X, \quad \gnorm{H(x)}{}{}\leq B_H, \forall x\in X_F^*.
    \end{equation}
\end{subequations}
\paragraph{Convergence criteria.}
In this paper, we define three criteria to evaluate any obtained solutions from Algorithm \ref{alg:IRopex}. The first two measures correspond to the gap functions for each inner and outer VI problem. First, let us define the inner gap function or feasibility gap. 
\begin{definition}\label{def:feasib-gap}
    Let $X\subseteq \mathbb{R}^n$ be a nonempty, closed, and convex set and $F: X\rightarrow \mathbb{R}^n$ be a monotone operator. Then we say that $\Tilde{x}$ is an $\epsilon_f$-solution regarding the feasibility gap function associated with $\text{VI}(F,X)$ if 
    \begin{equation}\label{eq:feasib-gap}
        \gap(\Tilde{x},F,X):=\max_{x\in X} \inprod{F(x)}{\Tilde{x}-x}\leq \epsilon_f,
    \end{equation}
Similarly, we say $\Tilde{x}$ is a stochastic $\epsilon_f$-solution regarding the feasibility gap function if 
    \begin{equation*}\label{eq:feasib-gap-stoch}
        \Ebb[\gap(\Tilde{x},F,X)]:=\Ebb[\max_{x\in X} \inprod{F(x)}{\Tilde{x}-x}]\leq \epsilon_f,
    \end{equation*}
\end{definition}
%\vspace{-3mm}
Furthermore, one can define the outer gap function, also known as the optimality gap, as follows. 
\begin{definition}\label{def:optim-gap}
    Let $X\subseteq \mathbb{R}^n$ be a nonempty, closed, and convex set, and $H:X\rightarrow \mathbb{R}^n$ be a monotone operator. Then we say that $\Tilde{x}$ is an $\epsilon_o$-solution regarding the optimality gap function associated with $\text{VI}(H,X^*_F)$ if 
    \begin{equation}\label{eq:optim-gap}
        \gap(\Tilde{x},H,X^*_F):=\max_{x\in X^*_F} \inprod{H(x)}{\Tilde{x}-x}\leq \epsilon_o,
    \end{equation}
    Similarly, we say $\Tilde{x}$ is a stochastic $\epsilon_o$-solution regarding the optimality gap function if 
     \begin{equation*}\label{eq:optim-gap=stoch}
        \Ebb[\gap(\Tilde{x},H,X^*_F)]:=\Ebb[\max_{x\in X^*_F} \inprod{H(x)}{\Tilde{x}-x}]\leq \epsilon_o,
    \end{equation*}
\end{definition}
%\vspace{-3mm}
Henceforth, we maintain $\epsilon_f$ and $\epsilon_o$ of the same order and commonly refer to them as  $\epsilon$.
 Note that $\gap(\Tilde{x},F,X) = 0$ (respectively, $\gap(\Tilde{x},H,X^*_F) = 0$) if and only if $\Tilde{x}$ is a solution to $\text{VI}(F,X)$ (respectively, $\text{VI}(H,X^*_F)$). Moreover, it is easy to see $\gap(\Tilde{x}, F, X) \geq 0$ for all $\Tilde{x}\in X$ and $\gap(\Tilde{x}, H, X_F^*)\geq 0$ for all $\Tilde{x}\in X_F^*$.  However, if $\Tilde{x}\notin X$, then one might obtain $\gap(\Tilde{x},F,X) \leq 0 $. Similarly, if $\Tilde{x}\notin X_F^*$ then we might have  $\gap(\Tilde{x},H,X^*_F)\leq 0$. Due to the structure of our proposed method, the last case where we have negative values in optimality is likely to happen. Therefore, we also need to obtain lower bounds for the optimality gap. \\
The last measure of convergence is related to the distance between the solutions generated by Algorithm \ref{alg:IRopex} from the feasible set $X_F^*$. In particular, we are interested in the distance function $\text{dist}(\tilde{x}, X_F^*)$ where $\tilde{x} \in X$ is a given point.

{\bf Stochastic approximation of operators.}
We assume the operators $F$ and $H$ are in expectation form. We refer to this as a stochastic version of the problem \eqref{eq:bilevel-vi} as a fully-stochastic bilevel VI. We use stochastic oracles (SO) to generate random vectors estimating $F$ and $H$. In particular, given a random variable $\xi$ which is independent of the search point $x$, SOs generate random vectors of $\Ffrak (x,\xi)$ and $\Hfrak (x,\xi)$ such that 
\begin{subequations}
    \begin{equation}\label{eq:stoch-oracle-F}
        \forall x\in X, \quad F(x) = \Ebb[\Ffrak(x,\xi)],\quad \Ebb[\gnorm{F(x)-\Ffrak(x,\xi)}{}{2}]\leq \sigma_F^2,
    \end{equation}
    \begin{equation}\label{eq:stoch-oracle-H}
        \forall x\in X, \quad H(x) = \Ebb[\Hfrak(x,\xi)],\quad \Ebb[\gnorm{F(x)-\Hfrak(x,\xi)}{}{2}]\leq \sigma_H^2.
    \end{equation}
\end{subequations}
 %Basically, we call $\Ffrak (x,\xi)$ and $\Hfrak (x,\xi)$ the unbiased estimators of $F$ and $H$, respectively. 
 \paragraph{Weak sharpness of operators.}
 Throughout this paper, we rely on the weak sharpness assumption to establish some error bounds. 
\begin{definition}\label{def:weak-sharp}
         We say that $\text{VI}(F, X)$ is $\alpha$-weakly sharp if there exists a scalar $\alpha > 0$ such that for all $x \in X$ and $x^* \in X^*_F$, the following holds
         \begin{equation*}\label{eq:weakly-sharp}
             \inprod{F(x^*)}{x-x^*}\geq \alpha \text{dist}(x,X^*_F). 
         \end{equation*}
         \end{definition}

\section{The Regularized operator extrapolation method } \label{sec:algorithm}
%\vspace{-10pt}
In the $k$-th iteration of the $\iropex$ method with the search point $x_k$, we generate random vectors $\Ffrak_k:=\Ffrak(x_k,\xi_k)$ and $\Hfrak_k:=\Hfrak(x_k,\xi_k)$ using the SOs we define in \eqref{eq:stoch-oracle-F} and \eqref{eq:stoch-oracle-H}. Unlike the usual extragradient methods, $\iropex$ is a simple algorithm without intermediate steps. The most crucial idea in $\iropex$ is that it incorporates regularization techniques with operator extrapolation methods. Specifically, we extrapolate the regularized operator $O(\cdot,\eta_k):=F(\cdot) + \eta_k H(\cdot) $ with the extrapolation parameter $\theta_k$. Note that the regularization parameter $\eta_k$ plays a weighting role in how much information is used from each operator. Usually, to get convergence in the regularization method, we need $\tsum_{k=1}^\infty\eta_k =\infty$ and $\eta_k\rightarrow 0$. Moreover, $\gamma_k$ denotes the step-size for the proximal operator and $\tau_k$ is the averaging parameter to obtain $\bar{x}_K $. See Algorithm \ref{alg:IRopex} for more details. 
\begin{algorithm}[H]
	\caption{\textbf{R}egularized \textbf{Op}erator \textbf{Ex}trapolation (\iropex) method}\label{alg:IRopex}
	\begin{algorithmic}[1]
		\State {\bf Input:} $x_0 = x_1 \in X$, $\{\tau_1,\eta_0,\eta_{1},\gamma_1, \theta_1\}$, $K$
		\For{k = $1, \dots, K-1$}
		\begin{equation}\label{eq:x-update}
				x_{k+1} \gets\argmin_{x \in X} \inprod{\Ffrak_k+\eta_k\Hfrak_k + \theta_k\big[\Ffrak_k+\eta_{k-1}\Hfrak_k - [\Ffrak_{k-1}+\eta_{k-1}\Hfrak_{k-1}]\big]}{x} + \tfrac{1}{2\gamma_k}\gnorm{x-x_k}{}{2}
		\end{equation}
		\EndFor\\
		\Return $\bar{x}_K = \tsum_{k=0}^{K-1}\tfrac{\tau_k x_{k+1}}{\tau_k}$.
	\end{algorithmic}
\end{algorithm} 
\section{Convergence analysis of R-OpEx method} \label{sec:convegence-analysis-iropex}
This section provides an integrated convergence analysis for the $\iropex$ method. We consider two main subcategories to study the convergence rate. First, we provide convergence and complexity results for the general BVI with nonsmooth and stochastic operators. Second, we assume the inner problem $\text{VI}(F,X)$ is smooth and mentions the corresponding rates. In the following lemma, we provide a recursive inequality for each iteration of Algorithm \ref{alg:IRopex}.
 \begin{lemma}\label{lem:one-iteration-iropex}
     Consider the problem \eqref{eq:bilevel-vi} with $ \mu_H\geq 0$. Let $\{x_k\}_{k\geq 1}$ be a sequence generated by Algorithm \ref{alg:IRopex}. Then we have the following relation for any $x\in X$
     \begin{equation}\label{eq:one-iteration-iropex}
     \Scale[.95]{
     \begin{aligned}
         &\inprod{F(x)+ \eta_kH(x)}{x_{k+1}-x}\leq \tfrac{1}{2\gamma_k}[\gnorm{x-x_{k}}{}{2}-\gnorm{x_{k+1}-x_{k}}{}{2}] - (\tfrac{1}{2\gamma_k} + \eta_k\mu_H)\gnorm{x-x_{k+1}}{}{2}\\
         &+ \inprod{\Delta\Ofrak_{k+1}}{x_{k+1}-x} - \theta_k\inprod{\Delta\Ofrak_k}{x_k-x}
         + \theta_k (L_F + \eta_{k-1} L_H)\gnorm{x_k-x_{k-1}}{}{}\gnorm{x_{k+1}-x_{k}}{}{}\\
         &-\inprod{\delta_{k+1}^F + \eta_k \delta_{k+1}^H}{x_{k+1}-x}- \theta_k \inprod{\delta_k^F -\delta_{k-1}^F+ \eta_{k-1}[\delta_k^H -\delta_{k-1}^H]}{x_{k+1}-x_k}\\
          &+ \theta_k(M_F + \eta_{k-1}M_H)\gnorm{x_{k+1}-x_k}{}{},
     \end{aligned}}
     \end{equation}
     where $\Delta \Ofrak_k = \Ffrak_k - \Ffrak_{k-1} + \eta_{k-1}[\Hfrak_k-\Hfrak_{k-1}] $ and $\delta_k^F = \Ffrak_k - F(x_k)$, $\delta_k^F = \Hfrak_k - H(x_k)$.
     \end{lemma}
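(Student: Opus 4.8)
The plan is to start from the optimality condition of the proximal update \eqref{eq:x-update}. Since $x_{k+1}$ minimizes a $\tfrac{1}{\gamma_k}$-strongly convex objective over the convex set $X$, the first-order optimality (prox-inequality) gives, for any $x\in X$,
\[
\inprod{\Ffrak_k+\eta_k\Hfrak_k + \theta_k\Delta\Ofrak_k}{x_{k+1}-x} \le \tfrac{1}{2\gamma_k}\bigl[\gnorm{x-x_k}{}{2} - \gnorm{x_{k+1}-x_k}{}{2} - \gnorm{x-x_{k+1}}{}{2}\bigr],
\]
which is the three-point identity associated with the Bregman/Euclidean prox step; if $\mu_H\ge 0$ we can keep the extra curvature $\eta_k\mu_H$ term in play for the strongly monotone case by noting $\inprod{\eta_k H(x)}{x_{k+1}-x}$ absorbs an $\eta_k\mu_H\gnorm{x-x_{k+1}}{}{2}$ term via monotonicity of $H$ around the target point. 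The left-hand side I want is $\inprod{F(x)+\eta_k H(x)}{x_{k+1}-x}$, so the first real step is to swap the stochastic operator values evaluated at $x_k$ for the exact operator values evaluated at the running point $x$, writing $\Ffrak_k + \eta_k\Hfrak_k = [F(x)+\eta_k H(x)] + [F(x_k)-F(x)+\eta_k(H(x_k)-H(x))] + [\delta_k^F + \eta_k\delta_k^H]$.

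Next I would split the ``operator difference'' bracket. The term $\inprod{F(x_k)-F(x)}{x_{k+1}-x}$ plus its $\eta_k H$ analogue is handled by monotonicity: $\inprod{F(x_k)-F(x)}{x_k-x}\ge 0$, so only the residual $\inprod{F(x_k)-F(x)}{x_{k+1}-x_k}$ survives, and this is bounded using the Lipschitz-plus-bounded-discontinuous structure \eqref{eq:F-Lipschitz-property}–\eqref{eq:H-Lipschitz-property} as $(L_F+\eta_k L_H)\gnorm{x_k-x}{}{}\gnorm{x_{k+1}-x_k}{}{} + (M_F+\eta_k M_H)\gnorm{x_{k+1}-x_k}{}{}$ — but looking at the claimed inequality, the extrapolation actually rearranges things so these appear with index $k-1$ and factor $\theta_k$, which tells me the monotonicity/Lipschitz bookkeeping should instead be applied to the \emph{extrapolation} term $\theta_k\Delta\Ofrak_k$. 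So the cleaner route: treat $\Ffrak_k+\eta_k\Hfrak_k$ plus the $\eta_{k-1}$-vs-$\eta_k$ discrepancy by the standard operator-extrapolation telescoping trick, rewriting $\theta_k\Delta\Ofrak_k = \theta_k[O_k(\eta_{k-1}) - O_{k-1}(\eta_{k-1})]$ where $O_k(\eta) := \Ffrak_k + \eta\Hfrak_k$, and then add-and-subtract exact operator values inside $\Delta\Ofrak_k$ to produce the noise differences $\delta_k^F-\delta_{k-1}^F$ (and $\eta_{k-1}$-scaled $H$-version) and the deterministic difference $[F(x_k)-F(x_{k-1})] + \eta_{k-1}[H(x_k)-H(x_{k-1})]$, the latter bounded by $(L_F+\eta_{k-1}L_H)\gnorm{x_k-x_{k-1}}{}{}\cdot\gnorm{x_{k+1}-x_k}{}{} + (M_F+\eta_{k-1}M_H)\gnorm{x_{k+1}-x_k}{}{}$ after pairing against $x_{k+1}-x_k$.

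Then I collect terms. The deterministic exact part $\inprod{F(x)+\eta_k H(x)}{x_{k+1}-x}$ is what we moved to the left side. The prox three-point term gives the $\tfrac{1}{2\gamma_k}$ bracket and the negative $\gnorm{x-x_{k+1}}{}{2}$ (enlarged to include $\eta_k\mu_H$). The genuinely stochastic first-order noise at step $k$, namely $-\inprod{\delta_{k+1}^F + \eta_k\delta_{k+1}^H}{x_{k+1}-x}$, must come from reindexing: the bound is stated in terms of $\Delta\Ofrak_{k+1}$ and $\delta_{k+1}$, so the algorithm's update at iteration $k$ produces objects indexed $k$, and we shift the ``martingale'' extrapolation term forward so that what is left un-telescoped is $\inprod{\Delta\Ofrak_{k+1}}{x_{k+1}-x} - \theta_k\inprod{\Delta\Ofrak_k}{x_k-x}$ — this is exactly the consecutive-difference structure that will telescope when we later sum over $k$. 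I would verify this shift carefully: write $\theta_k\inprod{\Delta\Ofrak_k}{x_{k+1}-x} = \theta_k\inprod{\Delta\Ofrak_k}{x_k-x} + \theta_k\inprod{\Delta\Ofrak_k}{x_{k+1}-x_k}$, send the first piece to the right-hand side as $-\theta_k\inprod{\Delta\Ofrak_k}{x_k-x}$, and keep the second paired against $x_{k+1}-x_k$ where the Lipschitz and noise-difference bounds live. The $\inprod{\Delta\Ofrak_{k+1}}{x_{k+1}-x}$ term is then a re-labeling artifact that I'd have to reconcile with the statement's indexing convention (it may be that the paper writes the whole bound one index ahead, i.e., the displayed inequality is the $k$-th iteration's consequence written with $\Ffrak_{k+1}$ replaced by the extrapolated surrogate — I would follow whichever convention makes $\Delta\Ofrak_{k+1}$ appear, adjusting add/subtract accordingly).

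The main obstacle I anticipate is precisely the index bookkeeping: there are three ``time'' labels floating around ($k-1,k,k+1$), two regularization weights ($\eta_{k-1},\eta_k$), and two noise sequences ($\delta^F,\delta^H$), and the extrapolation identity must be arranged so that (i) the monotonicity inequalities for $F$ and $H$ are applied at the correct anchor point, (ii) the Lipschitz-bounded terms land with the $\theta_k(L_F+\eta_{k-1}L_H)\gnorm{x_k-x_{k-1}}{}{}\gnorm{x_{k+1}-x_k}{}{}$ shape, and (iii) the remaining stochastic terms split cleanly into a telescoping part $\inprod{\Delta\Ofrak_{k+1}}{x_{k+1}-x}-\theta_k\inprod{\Delta\Ofrak_k}{x_k-x}$ plus inner-product-against-$(x_{k+1}-x_k)$ remainders. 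Everything else — expanding the prox optimality condition, invoking monotonicity of $H$ for the $\eta_k\mu_H$ term, and applying \eqref{eq:F-Lipschitz-property}–\eqref{eq:H-Lipschitz-property} — is routine once the decomposition is set up correctly.
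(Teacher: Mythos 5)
Your overall scaffolding matches the paper's proof: the three-point/prox inequality for the update \eqref{eq:x-update}, the split $\theta_k\inprod{\Delta\Ofrak_k}{x_{k+1}-x} = \theta_k\inprod{\Delta\Ofrak_k}{x_k-x} + \theta_k\inprod{\Delta\Ofrak_k}{x_{k+1}-x_k}$, and the decomposition of $\Delta\Ofrak_k$ into the deterministic difference $\Delta O_k = F(x_k)-F(x_{k-1})+\eta_{k-1}[H(x_k)-H(x_{k-1})]$ (bounded via \eqref{eq:F-Lipschitz-property}--\eqref{eq:H-Lipschitz-property} against $\gnorm{x_{k+1}-x_k}{}{}$) plus the noise differences $\delta_k^F-\delta_{k-1}^F$ and $\eta_{k-1}[\delta_k^H-\delta_{k-1}^H]$ are all exactly what the paper does. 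But you leave the one genuinely load-bearing step unresolved: the term $\inprod{\Delta\Ofrak_{k+1}}{x_{k+1}-x}$ is not a ``re-labeling artifact'' to be reconciled with an indexing convention --- it is produced by adding $\inprod{\Delta\Ofrak_{k+1}}{x_{k+1}-x}$ to both sides of the prox inequality so that the left-hand side becomes $\inprod{\Ffrak_{k+1}+\eta_k\Hfrak_{k+1}}{x_{k+1}-x}$, since $\Ffrak_k+\eta_k\Hfrak_k+\Delta\Ofrak_{k+1}=\Ffrak_{k+1}+\eta_k\Hfrak_{k+1}$. Only after this promotion of the operator evaluation from $x_k$ to $x_{k+1}$ can you write $\Ffrak_{k+1}+\eta_k\Hfrak_{k+1} = F(x_{k+1})+\eta_k H(x_{k+1}) + \delta_{k+1}^F+\eta_k\delta_{k+1}^H$ and apply monotonicity of $F$ together with $\mu_H$-strong monotonicity of $H$ at the pair $(x_{k+1},x)$; that is what yields both the correctly indexed noise term $-\inprod{\delta_{k+1}^F+\eta_k\delta_{k+1}^H}{x_{k+1}-x}$ and the $-\eta_k\mu_H\gnorm{x-x_{k+1}}{}{2}$ improvement in \eqref{eq:one-iteration-iropex}.

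Your first decomposition attempt --- anchoring the operator swap at $x_k$, i.e.\ $\Ffrak_k+\eta_k\Hfrak_k = [F(x)+\eta_k H(x)]+[F(x_k)-F(x)+\eta_k(H(x_k)-H(x))]+[\delta_k^F+\eta_k\delta_k^H]$ --- would not deliver the lemma even after the course correction you make for the extrapolation term: it leaves the noise indexed by $k$ rather than $k+1$, it forces a residual $\inprod{F(x_k)-F(x)}{x_{k+1}-x_k}$ controlled by $\gnorm{x_k-x}{}{}$ rather than $\gnorm{x_k-x_{k-1}}{}{}$, and strong monotonicity applied at $(x_k,x)$ produces $\gnorm{x_k-x}{}{2}$ rather than the $\gnorm{x-x_{k+1}}{}{2}$ appearing in the statement. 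You correctly sensed that something was off and pivoted toward bookkeeping on the extrapolation term, but the pivot you actually need is the index promotion via $\Delta\Ofrak_{k+1}$ described above; once that is in place, the remainder of your plan goes through and coincides with the paper's argument.
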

     %\vspace{-2mm}
     Now, we are ready to state the results for the general nonsmooth and stochastic problem.
     %\vspace{-3mm}
     \subsection{General nonsmooth and stochastic BVI}\label{sec:general-Bilevel}
     %\vspace{-3mm}
     In this section, we consider the most general case where we assume both operators $F$ and $H$ are in nonsmooth and stochastic forms. Notably, we assume $M_F, \sigma_F$ and $M_H, \sigma_H$ have positive values. We discuss the convergence results for the monotone case in Theorems \ref{thm:convex-case-optim-feasib}-\ref{thm:lower-bound-mono} and discuss the similar results for the strongly monotone case in Section \ref{sec:strong-mono}.  
     \begin{theorem}\label{thm:convex-case-optim-feasib}
         Let us assume problem \eqref{eq:bilevel-vi} is monotone $( \mu_H = 0)$. Suppose we have the following step-size policy 
         \begin{equation}\label{eq:step-size-con-bivel}
         \Scale[.95]{
             \tau_k =\tau =1,\quad \theta_k =\theta=1,\quad \eta_k= \eta= K^{-\tfrac{1}{4}},\quad \gamma_k = \gamma =\tfrac{D_X}{8D_X(L_F + \eta L_H)+ \sqrt{K(M_F^2 +2\sigma_F^2+\eta^2 (M_H^2+2\sigma_H^2))}},}
         \end{equation}
         Then, Algorithm \ref{alg:IRopex} gives the following optimality and feasibility upper bounds for $K\geq 1$
         \begin{equation}\label{eq:mono-optim}
\Scale[.93]{
\begin{aligned}
    &-B_H\Ebb[\text{dist}(\bar{x}_K,X^*_F)]\leq \Ebb[\gap(\bar{x}_K, H, X^*_F)] \leq  D_X\bigg[16D_X(\tfrac{L_F}{K^{3/4}} +\tfrac{L_H}{K}) + \tfrac{2\sqrt{M_F^2+2\sigma_F^2+\eta^2(M_H^2+ 2\sigma_H^2)}}{K^{1/4}} \\
    &+\tfrac{5(M_F^2+2\sigma_F^2+\eta^2(M_H^2+2\sigma_H^2)) + \sigma_F^2 + \eta^2\sigma_H^2}{8D_XK^{-1/4}(L_F + \eta L_H) + K^{1/4}\sqrt{M_F^2+2\sigma_F^2+\eta^2(M_H^2+2\sigma_H^2)}}+ \tfrac{5(M_F^2+2\sigma_F^2+\eta^2(M_H^2+2\sigma_H^2))}{8D_XK^{3/4}(L_F + \eta L_H) + K^{5/4}\sqrt{M_F^2+2\sigma_F^2+\eta^2(M_H^2+\sigma_H^2)}}\bigg].
     \end{aligned}}
\end{equation}
%\vspace{-2mm}
and 
\begin{equation}\label{eq:mono-feasib}
\Scale[.9]{
    \begin{aligned}
        &\Ebb[\gap(\bar{x}_K,F,X)]\leq D_X\bigg[16D_X(\tfrac{L_F}{K} +\tfrac{L_H}{K^{5/4}}) + \tfrac{2\sqrt{M_F^2+2\sigma_F^2+\eta^2(M_H^2+2\sigma_H^2)}}{K^{1/2}} 
        \\&+\tfrac{5(M_F^2+2\sigma_F^2+\eta^2(M_H^2+2\sigma_H^2))+ \sigma_F^2 + \eta^2\sigma_H^2}{8D_X(L_F + \eta L_H) + K^{1/2}\sqrt{M_F^2+2\sigma_F^2+\eta^2(M_H^2+2\sigma_H^2)}}
         + \tfrac{5(M_F^2+2\sigma_F^2+\eta^2(M_H^2+\sigma_H^2))}{8D_XK(L_F + \eta L_H) + K^{3/2}\sqrt{M_F^2+2\sigma_F^2+\eta^2(M_H^2+2\sigma_H^2)}}+\tfrac{2 C_H}{K^{1/4}}\bigg].
    \end{aligned}}
\end{equation}
     \end{theorem}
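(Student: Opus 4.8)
The plan is to start from the one–iteration estimate of Lemma \ref{lem:one-iteration-iropex}, specialize it to the constant parameters of \eqref{eq:step-size-con-bivel} (so $\tau_k\equiv 1$, $\theta_k\equiv 1$, $\eta_k\equiv\eta$, $\gamma_k\equiv\gamma$, $\mu_H=0$), sum over $k$, and then extract the optimality gap at $\bar x_K$ by comparing against a point $\hat x\in X^*_F$ and the feasibility gap by comparing against a point $\hat x\in X$. The averaged iterate $\bar x_K=\tfrac1K\sum_k x_{k+1}$ enters through the linearity $\inprod{F(\hat x)+\eta H(\hat x)}{\bar x_K-\hat x}=\tfrac1K\sum_k\inprod{F(\hat x)+\eta H(\hat x)}{x_{k+1}-\hat x}$, so everything reduces to bounding the summed right–hand side of \eqref{eq:one-iteration-iropex}.

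For the deterministic part of that sum I would proceed as follows. The Bregman terms $\tfrac{1}{2\gamma}[\norm{x-x_k}^2-\norm{x-x_{k+1}}^2]$ telescope to at most $\tfrac{1}{2\gamma}\norm{x-x_1}^2\le\tfrac{2D_X^2}{\gamma}$, leaving a reservoir $-\tfrac{1}{2\gamma}\sum_k\norm{x_{k+1}-x_k}^2$ of negative quadratics. The extrapolation differences $\inprod{\Delta\Ofrak_{k+1}}{x_{k+1}-x}-\inprod{\Delta\Ofrak_k}{x_k-x}$ telescope to boundary terms at $k=1$ and $k=K$: the first vanishes since $x_0=x_1$ forces $\Delta\Ofrak_1=0$, and the last, $\inprod{\Delta\Ofrak_K}{x_K-x}=\inprod{\Delta\Ofrak_K}{x_K-x_{K-1}}+\inprod{\Delta\Ofrak_K}{x_{K-1}-x}$, is controlled by Young's inequality against part of the reservoir together with \eqref{eq:F-Lipschitz-property}--\eqref{eq:H-Lipschitz-property} and the boundedness of $X$, producing the lone $O(\gamma)$ term. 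The cross term $\theta_k(L_F+\eta L_H)\norm{x_k-x_{k-1}}\norm{x_{k+1}-x_k}$ and the bounded–discontinuity term $\theta_k(M_F+\eta M_H)\norm{x_{k+1}-x_k}$ are each split by Young's inequality so that the generated $\norm{x_{k+1}-x_k}^2$ (and $\norm{x_k-x_{k-1}}^2$) pieces are absorbed by the reservoir — this is exactly why $\gamma$ is taken no larger than $\tfrac{1}{8(L_F+\eta L_H)}$ in \eqref{eq:step-size-con-bivel} — while the $M_F^2+\eta^2M_H^2$ pieces accumulate into a term of order $\gamma K$.

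The crux is the stochastic terms, since when we evaluate the gap functions the comparison point $\hat x$ is a maximizer and therefore depends on the whole sample path, so conditional expectations cannot be taken naively. I would split $-\inprod{\delta^F_{k+1}+\eta\delta^H_{k+1}}{x_{k+1}-x}=-\inprod{\delta^F_{k+1}+\eta\delta^H_{k+1}}{x_{k+1}-v_k}-\inprod{\delta^F_{k+1}+\eta\delta^H_{k+1}}{v_k-x}$ along an auxiliary ``ghost'' sequence $v_0=x_1$, $v_{k+1}=\proj_X\big(v_k+\gamma(\delta^F_{k+1}+\eta\delta^H_{k+1})\big)$: the first piece is a martingale difference (both $x_{k+1}$ and $v_k$ are measurable with respect to $\xi_1,\dots,\xi_k$, while $\delta^F_{k+1},\delta^H_{k+1}$ have zero conditional mean by \eqref{eq:stoch-oracle-F}--\eqref{eq:stoch-oracle-H}), hence vanishes in expectation; the second piece, summed over $k$, is the regret of projected linear updates on a set of diameter $2D_X$, so at most $\tfrac{2D_X^2}{\gamma}+\tfrac{\gamma}{2}\sum_k\norm{\delta^F_{k+1}+\eta\delta^H_{k+1}}^2$ uniformly in $x$, i.e. $O\big(\tfrac{D_X^2}{\gamma}+\gamma K(\sigma_F^2+\eta^2\sigma_H^2)\big)$ in expectation. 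The remaining noise term $-\theta_k\inprod{\delta^F_k-\delta^F_{k-1}+\eta(\delta^H_k-\delta^H_{k-1})}{x_{k+1}-x_k}$ pairs with the reservoir via Young's inequality and contributes another $O\big(\gamma K(\sigma_F^2+\eta^2\sigma_H^2)\big)$; collecting all Young constants here and in the previous paragraph produces the coefficient $5\big(M_F^2+2\sigma_F^2+\eta^2(M_H^2+2\sigma_H^2)\big)+\sigma_F^2+\eta^2\sigma_H^2$ appearing in \eqref{eq:mono-optim}.

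Finally I would convert the resulting bound on $\sum_k\inprod{F(\hat x)+\eta H(\hat x)}{x_{k+1}-\hat x}$, namely $O\big(\tfrac{D_X^2}{\gamma}+\gamma K(\cdots)+\gamma(\cdots)\big)$ plus zero–mean terms, into the stated estimates. For the optimality gap, fix $\hat x\in X^*_F$; since $\hat x$ solves $\text{VI}(F,X)$ and $x_{k+1}\in X$ we have $\inprod{F(\hat x)}{x_{k+1}-\hat x}\ge 0$, hence $\eta\inprod{H(\hat x)}{x_{k+1}-\hat x}\le\inprod{F(\hat x)+\eta H(\hat x)}{x_{k+1}-\hat x}$; summing, dividing by $\eta K=K^{3/4}$, taking expectations, and substituting $\eta=K^{-1/4}$ and $\gamma$ from \eqref{eq:step-size-con-bivel} yields \eqref{eq:mono-optim} — the term $\tfrac1\gamma=8(L_F+\eta L_H)+\tfrac{1}{D_X}\sqrt{K(\cdots)}$ gives the $16D_X^2(\tfrac{L_F}{K^{3/4}}+\tfrac{L_H}{K})$ and $\tfrac{2D_X\sqrt{\cdots}}{K^{1/4}}$ summands after division, while the $\gamma K$ and $\gamma$ variance terms give the last two fractions. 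The matching lower bound is immediate: $\gap(\bar x_K,H,X^*_F)\ge\inprod{H(x^*)}{\bar x_K-x^*}\ge-B_H\,\text{dist}(\bar x_K,X^*_F)$ for $x^*=\proj_{X^*_F}(\bar x_K)$, using $\norm{H(x^*)}\le B_H$ from \eqref{eq:boundedness-H}, then take expectations. For the feasibility gap, fix $\hat x\in X$ and write $\inprod{F(\hat x)}{x_{k+1}-\hat x}=\inprod{F(\hat x)+\eta H(\hat x)}{x_{k+1}-\hat x}-\eta\inprod{H(\hat x)}{x_{k+1}-\hat x}$, bounding the last term by $2\eta C_H D_X$ via \eqref{eq:boundedness-H}; dividing by $K$ rather than $\eta K$ gives \eqref{eq:mono-feasib}, whose rates are a factor $K^{1/4}$ worse than those in \eqref{eq:mono-optim} plus the extra $\tfrac{2C_H}{K^{1/4}}$ term. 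The main obstacle throughout is the sample-path dependence of the comparison point in the stochastic terms (resolved by the ghost sequence) and the bookkeeping of Young's-inequality constants needed so that every generated $\norm{x_{k+1}-x_k}^2$ is absorbed by the reservoir and the final constants land exactly on \eqref{eq:mono-optim}--\eqref{eq:mono-feasib}.
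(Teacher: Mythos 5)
Your proposal is correct and follows essentially the same route as the paper: the one-iteration estimate of Lemma \ref{lem:one-iteration-iropex}, telescoping plus Young's-inequality absorption of the cross and discontinuity terms under the step-size restriction $\gamma\le\tfrac{1}{8(L_F+\eta L_H)}$, and the split of the noise inner products along an auxiliary sequence into a martingale-difference part and a projected-linear-update regret part. Your ``ghost'' sequence is exactly the paper's augmented sequence $\{x^a_k\}$ from Lemma \ref{lemma:aug-seq} and Proposition \ref{prop:tech_res1-mo} (the paper projects onto $X^*_F$ for the optimality bound while you project onto $X$, which is equally valid since the comparison points lie in $X^*_F\subseteq X$), and the remaining steps --- dividing by $\eta K$ versus $K$, the $2\eta C_H D_X$ correction for feasibility, and the $-B_H\,\mathrm{dist}$ lower bound via Cauchy--Schwarz --- match the paper's argument.
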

     %\vspace{-3mm}
     \begin{remark}
        Note that using step-size policy in \eqref{eq:step-size-con-bivel}, Algorithm \ref{alg:IRopex} gives overall rate of $\mathcal{O}\big(D_X(\tfrac{D_XL_F}{K^{3/4}}+\tfrac{D_XL_H}{K}+\tfrac{M_F +\sigma_F} {K^{1/4}}+\tfrac{M_H +\sigma_H }{K^{1/2}} )\big)$.  This is the best-known rate for a stochastic and nonsmooth  BVI problem.
     \end{remark}
     Next, observe that constructing a lower bound on the optimality gap is crucial, given that Algorithm \ref{alg:IRopex} uses a single projection onto set $X$ and not set $X_F^*$. Therefore, it is also possible that we obtain a negative optimality gap. In Theorem \ref{thm:lower-bound-mono}, we construct an explicit lower bound for the optimality gap. 
     \begin{theorem}\label{thm:lower-bound-mono}
           Suppose $\text{VI}(F,X)$ is $\alpha$-weakly sharp. Then we have 
           \begin{equation}\label{eq:lower-bound-mono}
           \Scale[0.9]{
           \begin{aligned}
               & \Ebb[\gap(\bar{x}_K, H, X^*_F)] \geq  -\tfrac{B_HD_X }{\alpha}\bigg[16D_X(\tfrac{L_F}{K} +\tfrac{L_H}{K^{5/4}}) + \tfrac{2\sqrt{M_F^2+2\sigma_F^2+\eta^2(M_H^2+2\sigma_H^2)}}{K^{1/2}}
        \\&+\tfrac{5(M_F^2+2\sigma_F^2+\eta^2(M_H^2+2\sigma_H^2))+ \sigma_F^2 + \eta^2\sigma_H^2}{8D_X(L_F + \eta L_H) + K^{1/2}\sqrt{M_F^2+2\sigma_F^2+\eta^2(M_H^2+2\sigma_H^2)}}
         + \tfrac{5(M_F^2+2\sigma_F^2+\eta^2(M_H^2+\sigma_H^2))}{8D_XK(L_F + \eta L_H) + K^{3/2}\sqrt{M_F^2+2\sigma_F^2+\eta^2(M_H^2+2\sigma_H^2)}}+\tfrac{2 C_H}{K^{1/4}}\bigg].
           \end{aligned}}
           \end{equation}
     \end{theorem}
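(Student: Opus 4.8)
The plan is to derive the lower bound on the optimality gap by combining the weak sharpness assumption on $\text{VI}(F,X)$ with the feasibility bound already established in Theorem \ref{thm:convex-case-optim-feasib}. The key observation is that the lower bound $-B_H \Ebb[\text{dist}(\bar x_K, X_F^*)]$ on the optimality gap, appearing on the left side of \eqref{eq:mono-optim}, is itself only useful once we can control $\Ebb[\text{dist}(\bar x_K, X_F^*)]$ in terms of quantities that vanish as $K \to \infty$. This is exactly what weak sharpness buys us: it converts the feasibility gap into a distance bound.

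\textbf{Step 1: Relate distance to the feasibility gap via weak sharpness.} Let $x_F^\ast := \proj_{X_F^\ast}(\bar x_K) \in X_F^\ast$. By Definition \ref{def:weak-sharp} applied with $x = \bar x_K$ and $x^\ast = x_F^\ast$, we have $\inprod{F(x_F^\ast)}{\bar x_K - x_F^\ast} \geq \alpha\, \text{dist}(\bar x_K, X_F^\ast)$. Since $x_F^\ast \in X$, the point $x_F^\ast$ is a valid competitor in the max defining $\gap(\bar x_K, F, X)$, so
\begin{equation*}
\alpha\, \text{dist}(\bar x_K, X_F^\ast) \leq \inprod{F(x_F^\ast)}{\bar x_K - x_F^\ast} \leq \max_{x\in X}\inprod{F(x)}{\bar x_K - x} = \gap(\bar x_K, F, X).
\end{equation*}
Taking expectations and dividing by $\alpha$ gives $\Ebb[\text{dist}(\bar x_K, X_F^\ast)] \leq \tfrac{1}{\alpha}\Ebb[\gap(\bar x_K, F, X)]$.

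\textbf{Step 2: Substitute the feasibility bound and the optimality lower bound.} Combining the left inequality in \eqref{eq:mono-optim}, namely $\Ebb[\gap(\bar x_K, H, X_F^\ast)] \geq -B_H\,\Ebb[\text{dist}(\bar x_K, X_F^\ast)]$, with Step 1 yields
\begin{equation*}
\Ebb[\gap(\bar x_K, H, X_F^\ast)] \geq -\tfrac{B_H}{\alpha}\,\Ebb[\gap(\bar x_K, F, X)].
\end{equation*}
Now I plug in the explicit feasibility upper bound \eqref{eq:mono-feasib} from Theorem \ref{thm:convex-case-optim-feasib}. Multiplying that entire bracketed expression by $-\tfrac{B_H}{\alpha}$ reproduces exactly the right-hand side of \eqref{eq:lower-bound-mono}; the factor $D_X$ is already present in \eqref{eq:mono-feasib}, so the prefactor becomes $-\tfrac{B_H D_X}{\alpha}$ multiplying the same bracket, matching the claimed expression term-for-term.

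\textbf{Main obstacle.} The mathematical content here is light — the only substantive ingredients are the weak sharpness inequality and the already-proven feasibility bound, so the proof is essentially a two-line chaining argument followed by substitution. The point requiring a little care is the justification that $x_F^\ast = \proj_{X_F^\ast}(\bar x_K)$ genuinely lies in $X$ (so that it is admissible in the feasibility gap's maximization); this holds because $X_F^\ast \subseteq X$ by definition of the solution set of $\text{VI}(F,X)$. Beyond that, one should confirm that the expectation manipulations are valid (finiteness of the quantities involved, which follows from boundedness of $X$ and the bounds \eqref{eq:boundedness-F}--\eqref{eq:boundedness-H}), but no concentration or martingale argument is needed since everything is already in expectation form. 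I would therefore expect the bulk of the write-up to be the display-equation bookkeeping rather than any conceptual difficulty.
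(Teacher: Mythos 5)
Your proposal is correct and follows essentially the same route as the paper: weak sharpness lower-bounds the feasibility gap by $\alpha\,\Ebb[\text{dist}(\bar{x}_K,X_F^*)]$, which is then chained with the $-B_H\,\Ebb[\text{dist}(\bar{x}_K,X_F^*)]$ lower bound from \eqref{eq:mono-optim} and the explicit feasibility bound \eqref{eq:mono-feasib}. Your write-up is in fact slightly cleaner than the paper's, since you keep the maximization inside the expectation when invoking the feasibility gap, whereas the paper informally swaps the two.
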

     %\vspace{-3mm}
     Additionally, we provide explicit oracle complexity in case the threshold $\tfrac{\alpha}{\gnorm{H(x^*)}{}{}}$ is available. Remark \ref{rem:alpha-H} states the convergence rate in the presence of $\tfrac{\alpha}{\gnorm{H(x^*)}{}{}}$. For more details, the reader is referred to Theorem \ref{thm:weakly-sharp-conv} in Section \ref{sec:analysis-monotone}. 
     \begin{remark}\label{rem:alpha-H}
         Consider we change $\eta = K^{-1/4}$ to a value satisfying $\eta\leq\tfrac{\alpha}{\gnorm{H(x^*)}{}{}}$ and keep the rest of parameters as \eqref{eq:step-size-con-bivel}. Then Algorithm \ref{alg:IRopex} gives rate of $\mathcal{O}\big( D_X ( \tfrac{D_XL_F}{K} + \tfrac{D_XL_H}{K^{5/4}} + \tfrac{M_F + M_H + \sigma_F + \sigma_H}{K^{1/2}} ) \big)$. This improves the overall rate from $\mathcal{O}(1/K^{1/4})$ to $\mathcal{O}(1/K^{1/2})$.
     \end{remark}
     %\vspace{-2mm}
     Next, we provide the convergence rates for the strongly monotone case, where $\mu_H > 0$. Section~\ref{sec:strong-mono} presents a complete analysis of this case.
     \begin{remark}\label{rem:strongly-mono}
         Suppose the problem \eqref{eq:bilevel-vi} is strongly monotone $(\mu_H>0)$. Consider $\tau_k = k+1, \theta_k = \tfrac{k}{k+1}$ and keep $\eta_k, \gamma_k$ as \eqref{eq:step-size-con-bivel}. Then, we obtain $\mathcal{O}\big( D_X ( \tfrac{D_XL_F}{K^{7/4}} + \tfrac{D_XL_H}{K^2} + \tfrac{M_F + \sigma_F}{K^{5/4}} + \tfrac{M_H + \sigma_H}{K^{3/2}} ) \big)$. Further, suppose $\tfrac{\alpha}{\gnorm{H(x^*)}{}{}}$ is available then the overall convergence rate is improved to $\mathcal{O}\big( D_X ( \tfrac{D_XL_F}{K^2} + \tfrac{D_XL_H}{K^{9/4}} + \tfrac{M_F + M_H + \sigma_F + \sigma_H}{K^{3/2}} ) \big)$. 
     \end{remark}
     %\vspace{-2mm}
     As one can observe from \eqref{eq:step-size-con-bivel}, we should have an estimate of the number of iterations \( K \) a priori. To address this, we propose a new step-size policy that eliminates the need to know the number of iterations in advance.
     %\vspace{-2mm}
     \subsubsection{Step-size policy for $\eta_k$ independent of $K$}\label{sec:adaptive-main}
     %\vspace{-2mm}
     This section specifies a new step-size policy independent of $K$. The full analysis is in Section \ref{sec:adaptive}. While we provide analysis of such a policy for only the nonsmooth fully stochastic case, similar claims can be made for the remaining cases. We drop their discussion for the sake of brevity.
     \begin{remark}
        Consider the following adaptive updating policy
        \begin{equation*}\label{eq:adap-step-size-main}
        \Scale[.9]{
            \tau_k = \tau = 1, \quad \eta_k = (k+1)^{-1/4}, \quad \theta_k = (\tfrac{k}{k+1})^{1/4}, \quad \gamma_k = \tfrac{D_X}{8D_X (L_F + \eta_k L_H) + \sqrt{k(M_F^2 + 2\sigma_F^2 + \eta_k^2(M_H^2+2\sigma_H^2))}}}.
        \end{equation*}
    Then  Algorithm \ref{alg:IRopex} gives $\mathcal{O}\big(D_X(\tfrac{D_XL_F}{K^{3/4}}+\tfrac{D_XL_H}{K}+\tfrac{M_F +\sigma_F} {K^{1/4}}+\tfrac{M_H +\sigma_H }{K^{1/2}} )\big)$ convergence rate.
     \end{remark}
     %\vspace{-2mm}
     Moreover, similar to Theorem \ref{thm:lower-bound-mono}, we provide an explicit lower bound for the optimality gap in Section \ref{sec:adaptive}.
     %\vspace{-3mm}
     \subsection{Smooth inner VI}
     %\vspace{-3mm}
    In this section, we analyze the convergence of Algorithm \ref{alg:IRopex} under the assumption that the operator \( F \) is smooth, i.e., \( M_F = 0 \). This setting can be broken into two subcategories depending on whether we have stochasticity in \( F \) $(\sigma_F >0)$ or not $(\sigma_F = 0)$. In case of stochastic \( F \), we use mini-batching to reduce the effect of $\sigma_F$. In particular, for size $B$ mini-batching to the operator $\Ffrak$, we have $\bar{\Ffrak}_B(x) = \tfrac{1}{B}\tsum_{i=1}^{B}\Ffrak(x,\xi_i)$ to approximate operator operator $F$ in line two of Algorithm \ref{alg:IRopex}. Remark \ref{rem:smooth-batch} states the overall convergence rate and operator complexity for an inner-smooth BVI with stochastic or deterministic $F$. The comprehensive analysis for the smooth inner VI case is presented in Section \ref{sec:smooth-VI-appden}.
    \begin{remark}\label{rem:smooth-batch}
        {\bf Stochasitc $F$:} Assume that problem \eqref{eq:bilevel-vi} is monotone $(\mu_H = 0)$. Additionally, let $M_F = 0$, and suppose we have the following step-size policy with mini-batching of size $B = K$
         %\vspace{-2mm}
         \begin{equation*}\label{eq:step-size-con-bivel-smooth-stoch-F-rem}
         \Scale[.95]{
             \tau_k =\tau =1,\quad \theta_k =\theta=1,\quad \eta_k= \eta= K^{-\tfrac{1}{2}},\quad \gamma_k = \gamma =\tfrac{D_X}{8D_X(L_F +\eta L_H)+ \sqrt{M_H^2+2(\sigma_H^2 + \sigma_F^2)}},}.
         \end{equation*}
         Then, $\iropex$ gives the overall convergence rate of $\mathcal{O}\big(D_X(\tfrac{D_XL_F}{K^{1/2}} + \tfrac{D_XL_H}{K} + \tfrac{M_H+\sigma_F+\sigma_H}{K^{1/2}})\big)$.\\
         {\bf Deterministic $F$:} If we have a smooth and deterministic inner VI $(M_F = \sigma_F = 0)$, then we obtain the same convergence rate as above. Finally, if $(M_F = \sigma_F = 0)$ and suppose BVI is strongly monotone $(\mu_H>0)$, then, the overall convergence rate improves to $\mathcal{O}\big(D_X(\tfrac{D_XL_F}{K^{3/2}} + \tfrac{D_XL_H}{K^2} + \tfrac{M_H+\sigma_F+\sigma_H}{K^{3/2}})\big)$.
    \end{remark}
    %\vspace{-2mm}
    Note that we improve from convergence rate $\mathcal{O}(K^{-1/4})$ in the general nonsmooth BVI to $\mathcal{O}(K^{-1/2})$. Although it keeps the oracle complexity as $\mathcal{O}(\epsilon^{-4})$ for the stochastic $F$ given that we do mini-batching, we improve in terms of oracle complexity in $H$ to $\mathcal{O}(\epsilon^{-2})$. Note that in the strongly monotone case for stochastic $F$, no mini-batching scheme on $F$ improves upon the current $\mathcal{O}(\epsilon^{-4/5})$ complexity in $F$ and $H$. %, we keep perform a single sample evaluation of $F$.
    Finally, results for deterministic $F$ imply that the \iropex~maintains $\mathcal{O}(\epsilon^{-2})$ complexity in $H$ when $F$ is deterministic. The complexity can be improved to $\mathcal{O}(\epsilon^{-2/3})$ if $\mu_H > 0$.
%\vspace{-3mm}
%\input{sec-conclude}

% \section{Saddle point with coupling constraints}
% Consider the general class of saddle point problems
% \begin{equation}
%     \begin{aligned}
%         \min_u \max_v & \ f(u,v) \\
%         \text{s.t.}\   & \ g(u,v) \le 0
%     \end{aligned}
% \end{equation}

% Define $\widetilde{W}_u(v)=\{u: g(u,v)\le 0\}$ and $\widetilde{W}_v(u)=\{v: g(u,v)\le 0\}$.
%  Then $(\hat{u}, \hat{v})\in U\times V$ is a Generalized Nash Equilibrium of this problem if 
% \[
% \max_{v\in V\cap \widetilde{W}_{\hat{u}}} f(\hat{u}, v) \le f(\hat{u}, \hat{v})\le \min_{u\in U\cap \widetilde{W}_{\hat{v}}} f(u, \hat{v}).
% \]

% This motivates us to use the following criterion:
% We say that $(\hat{u}, \hat{v})\in U\times V$ is a $\epsilon$-Generalized Nash Equilibrium if $g(\hat{u}, \hat{v}) \le \epsilon$  and 
% \begin{equation}
%     \begin{aligned}
%         \max_{u\in U} &\   f(\hat{u},\hat{v})-f(u,\hat{v}) & \le \epsilon\\
%         \text{s.t. }  & \ g(u,\hat{v})\le \epsilon &  \\
%         \max_{v\in V} & \ f(\hat{u},v)-f(\hat{u},\hat{v}) & \le \epsilon \\
%         \text{s.t. } & \ g(\hat{u}, v)\le \epsilon
%     \end{aligned}
% \end{equation}
\section{Numerical experiments}\label{sec:numerical}
%\vspace{-5mm}
In this section, we validate the results from Section \ref{sec:convegence-analysis-iropex}, particularly Theorem \ref{thm:convex-case-optim-feasib} and Remark \ref{rem:strongly-mono}, on two distinct settings. First, we test the performance of Algorithm $\iropex$ on a stochastic convex optimization problem whose feasible region is defined on a stochastic two-player zero-sum Nash game. Next, we test our algorithm on a traffic equilibrium problem with uncertainty in the inner or outer problems. All the experiments are implemented on 64-bit Windows 11 with Intel i7-1260P @ 2.10GHz and 16GB RAM. 
%\vspace{-5mm}
\paragraph{VI-constrained optimization problem.}
Consider the following optimization problem 
\begin{equation}\label{eq:VI_cons_num}
    \min_{x\in X^*_F}\psi(x):=\Ebb[ \tfrac{1}{2}\gnorm{x+ \zeta}{}{2}]
\end{equation}
where $\zeta \in \mathbb{R}^2$ is a random vector whose elements $\zeta_i, i=1,2$ are independent and identically distributed following $\mathcal{N}(0,1)$. Moreover, considering the set $ X  = [20,50]\times [5,15]$, we define the feasible region $X^*_F\subseteq X$ as the solution set of the following Nash equilibrium (NE) problem 
\begin{equation}\label{eq:VI_cons_inner}
   \min_{x_1\in [20,50]}\max_{x_2\in [5,15]} f(x_1,x_2):= \Ebb[25 - 2x_1x_2 + \xi x_1],
\end{equation}
where the random variable $\xi$ is following $\mathcal{N}(10,1)$. One can notice that the explicit form of $X^*_F$ can be stated as $X^*_F: = \{(x_1,x_2)| x_1 \in  [20,50], x_2 = 5\}$.  Furthermore, considering the explicit form of $X^*_F$, the minimum of \eqref{eq:VI_cons_num} is obtained at $x^* = (20,5)$. Note that the corresponding operators $\Ffrak$ and $\Hfrak$ have the following expressions 
\[
\Ffrak(x_1,x_2;\xi)=\begin{bmatrix}
-2x_2 + \xi\\
2x_1 
\end{bmatrix}
\quad
\Hfrak(x_1,x_2;\zeta)=\begin{bmatrix}
x_1 + \zeta_1 \\
x_2 + \zeta_2 
\end{bmatrix}
\]
We implement Algorithm \ref{alg:IRopex} on problem \eqref{eq:VI_cons_num} for monotone and strongly monotone cases. Note that it is a smooth problem with $L_F = \gnorm{[0,-2;2,0]}{}{}$ and $L_H =1$.  We set the number of iterations $K = 5\time 10^6$ for $R = 10$ i.i.d replications. Figure \ref{fig:full_stoch_conv} depicts the optimality and feasibility gaps of the solutions generated by $\iropex$ regarding the number of iterations and time in the monotone setting. For the optimality and feasibility gaps, we use $\psi(\Bar{x}_k) - \psi(x^*)$ and $f(\Bar{x}_{1,k}, x^*_2) - f( x^*_1, \Bar{x}_{2,k})$, respectively. Each gap is plotted against the number of iterations and the time in seconds. Figure \ref{fig:full_stoch_conv_str} shows similar results for the strongly monotone case of implementing Algorithm \ref{alg:IRopex} on \ref{eq:VI_cons_num}. As expected, we have better performance in run time and accuracy of $\iropex$ in the strongly monotone setting. 
%\vspace{-5mm}
\begin{figure}[H]
    \centering
    \includegraphics[width=.8\textwidth]{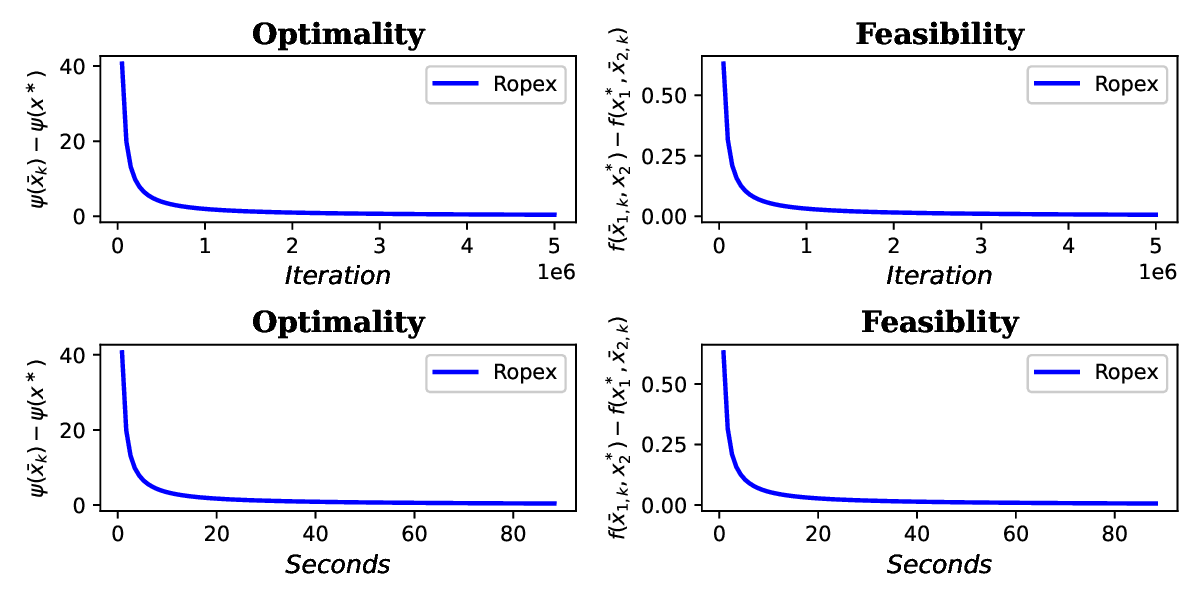}
    %\vspace{-4mm}
    \caption{Optimality and feasibility gaps for $10$ replication of Algorithm \ref{alg:IRopex} with $5\times 10^6$ iterations (monotone case)}
    \label{fig:full_stoch_conv}
\end{figure}
%\vspace{-4mm}
\begin{figure}[H]
    \centering
    \includegraphics[width=0.8\textwidth]{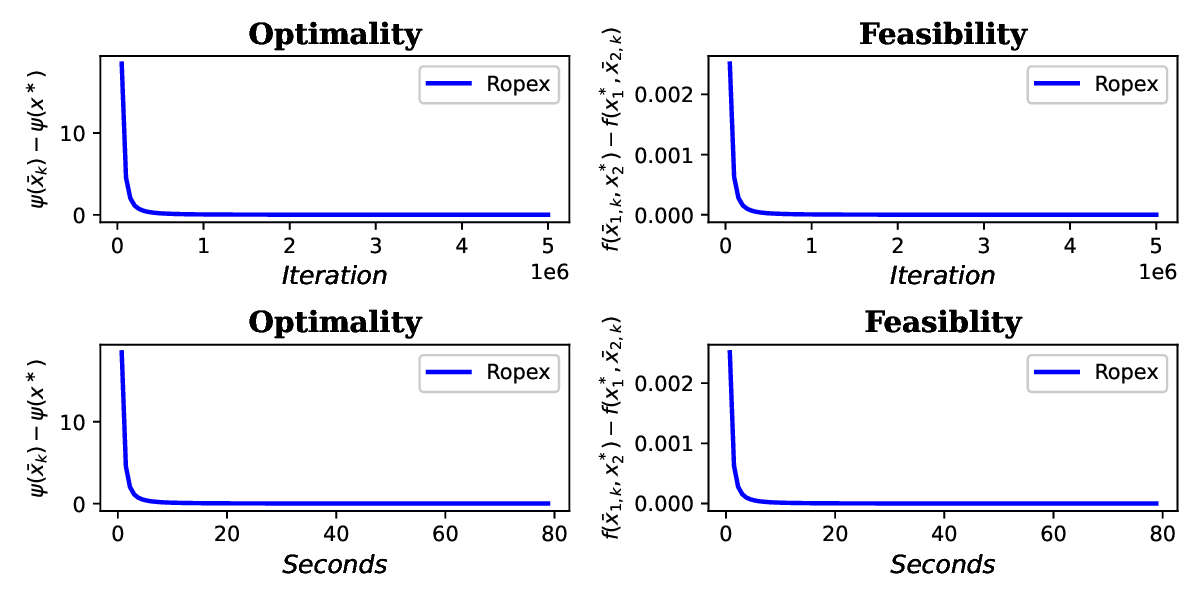}
    %\vspace{-4mm}
    \caption{Optimality and feasibility gaps for $10$ replication of Algorithm \ref{alg:IRopex} with $5\times 10^6$ iterations (strongly monotone case)}
    \label{fig:full_stoch_conv_str}
\end{figure}
%\vspace{-5mm}
\paragraph{Traffic equilibrium problem}
As shown in Figure \ref{fig:traffic}, we consider a traffic flow network.
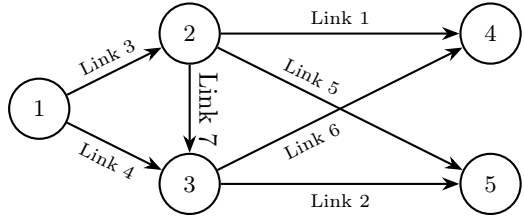
\begin{figure}[hbt!]
\centering
\begin{tikzpicture}[->, thick, node distance=1cm, auto, 
    mynode/.style={circle, draw, minimum size=0.8cm, font=\small},
    every path/.style={draw, ->, >=Stealth}
]
% Nodes
\node[mynode] (1) at (0,1) {1};
\node[mynode] (2) at (2,2) {2};
\node[mynode] (3) at (2,0) {3};
\node[mynode] (4) at (6,2) {4};
\node[mynode] (5) at (6,0) {5};
% Links with plain text labels
\draw (2) -- (4) node[midway, above] {\scriptsize Link 1};
\draw (3) -- (5) node[midway, below] {\scriptsize Link 2};
\draw (1) -- (2) node[midway, above, sloped] {\scriptsize Link 3};
\draw (1) -- (3) node[midway,below, sloped] {\scriptsize Link 4};
\draw (2) -- (5) node[midway, above left, sloped] {\scriptsize Link 5};
\draw (3) -- (4) node[midway, below left, sloped] {\scriptsize Link 6};
\draw (2) -- (3) node[midway, above, sloped] {Link 7};
\end{tikzpicture}
\caption{Traffic Network with 5 nodes and 7 links}
\label{fig:traffic}
\end{figure}
As illustrated, the traffic network we are interested in has 5 nodes, 7 links, 2 origin-destination (O-D) pairs (1$\rightarrow$ 4, 1$\rightarrow$ 5), and 6 paths $p_1 = \{3,7,6\}, p_2 = \{3,1\}, p_3 = \{4,6\}, p_4 = \{3,7,2\}, p_5 = \{3,5\}, p_6 = \{4,2\}$. Travel equilibrium problems are modeled according to travel demand between each (O-D) pair and capacity on each link. Let $d = [d_1 + \xi_1, d_2 + \xi_2]$ be the stochastic travel demand between (O-D) pairs where $\xi_i, i = 1,2$ is the noise following $\mathcal{N}(0,1)$. Also, denote $cap = [cap_1,\dots, cap_7]$ as the capacity vector associated to network links. The corresponding traffic flows for paths and links are denoted by $h = [h_1,\dots, h_6]$ and $f = [f_1, \dots, f_7]$. Furthermore, $\Delta$ represents link-path incident matrix, where $\delta_{\ell, p}=1$ if path $p$ contains link $\ell$, and $\delta_{\ell, p}=0$ otherwise. Similarly, $\Omega$ denotes (O-D)-path matrix. 
In these experiments, we follow a generalized bureau of public roads (GBPR) to compute the link travel time function \cite{YIN2009470} as follows 
\[
c_\ell(f_\ell) = t_\ell^0(1+0.15(\tfrac{f_\ell}{cap_\ell})^{n_\ell}),
\]
where $n_\ell \geq 1$, and $t_\ell^0$ are given parameters. One can see $f = \Delta h$ implying the following 
\begin{equation}\label{eq:path-cost}
    C(h) = \Delta^\top c(\Delta h) 
\end{equation}
Let $u = [u_1,u_2]$ be the minimum travel costs between each (O-D) pair. Consequently, for a realization of the random vector $\xi =[\xi_1, \xi_2]$, one can use Wardrop's user equilibrium for the decision variable $x = [h,u]$ and construct the complementarity problem $x\geq 0, F(x)\geq 0,$ and $x^\top F(x) =0$ where we have the following operator $F \in \mathbb{R}^8$
\begin{equation*}
    F(x;\xi) = 
\begin{bmatrix}
C(h) - \Omega^\top u \\
\Omega h -d 
\end{bmatrix}
\end{equation*}
We set $n_\ell = 1 \forall \ell = 1,\dots, 7$, yielding a linear complementarity problem (LCP). Moreover, we use total travel cost of the network $\psi(x) = \Ebb[\zeta^\top C(h)]$ as the function we want to minimize in traffic equilibrium where $\zeta$ is a vector with each element $\zeta_i \sim \mathcal{U}_{0,2},i\in[6]$. Given that $\psi$ is convex and $F$ is a monotone operator for $n_\ell = 1, \ell\in [7]$ (see Section 6.2 of \cite{samadi2025improved}), we can formulate the problem as a stochastic VI constrained minimization where the outer problem is associated with minimization of $\psi(x)$ and the inner problem is $\textbf{VI}(F,\mathbb{R}^8_+)$. For the experiment setting, we set the capacity of each link $\ell$ to $cap_\ell = 400$. Let the stochastic travel demand between (O-D) be $d = [200+\xi_1, 220+\xi_2]$. Moreover, we choose $t_\ell^0 = 1,\forall \ell \in [7]$. We use $\gnorm{\Bar{x}_{k+1} - \Bar{x}_k}{}{}$ to measure suboptimality where $\Bar{x}_k $ is the solution generated by $\iropex$. Since the inner problem is an LCP, we use the function $\phi(x) = \gnorm{\min\{x,0\}}{}{} + \gnorm{\min\{F(x),0\}}{}{} + \gnorm{x^\top F(x)}{}{}$ to measure the infeasibility. Figures \ref{fig:full_stoch_conv_set2} and \ref{fig:full_stoch_str_conv_set2} demonstrate the performance of Algorithm \ref{alg:IRopex} for $R=10$ i.i.d instances and $K =5\times 10^6$ iterations in monotone and strongly monotone settings, respectively. Similar to the previous section, we plot the gaps in terms of the number of iterations and time in seconds.  
%\vspace{-4mm}
\begin{figure}[H]
    \centering
    \includegraphics[width=0.8\textwidth]{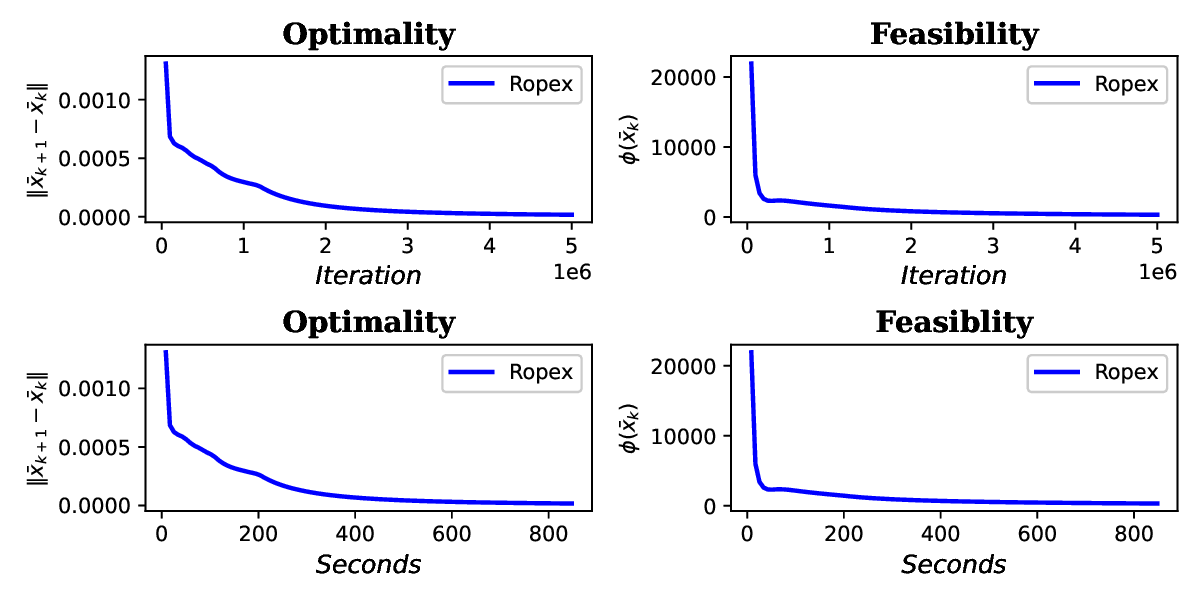}
    %\vspace{-8mm}
    \caption{Optimality and feasibility gaps for $10$ replication of Algorithm \ref{alg:IRopex} with $5\times 10^6$ iterations (monotone case)}
    \label{fig:full_stoch_conv_set2}
\end{figure}
%\vspace{-8mm}
\begin{figure}[H]
    \centering
    \includegraphics[width=0.8\textwidth]{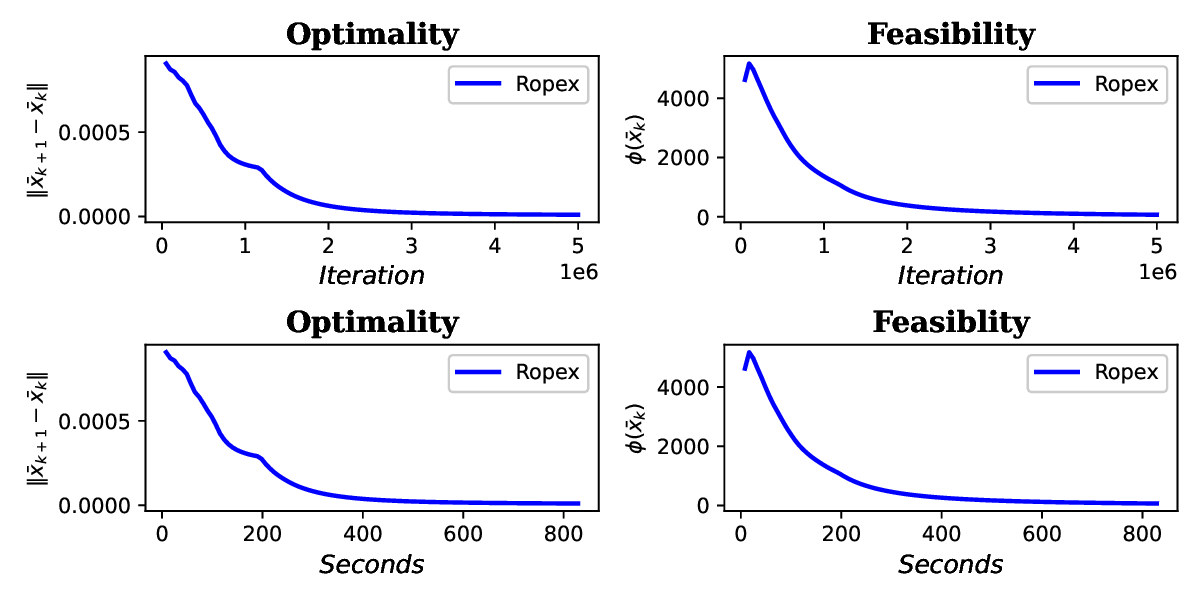}
    %\vspace{-8mm}
    \caption{Optimality and feasibility gaps for $10$ replication of Algorithm \ref{alg:IRopex} with $5\times 10^6$ iterations (strongly monotone case)}
    \label{fig:full_stoch_str_conv_set2}
\end{figure}
%\vspace{-8mm}
\section{Conclusion and discussion}\label{sec:conclude} \vspace{-3mm} This paper studies bilevel variational inequality problems with nonsmooth and stochastic operators. We propose a novel and easy-to-implement algorithm that utilizes the operator extrapolation and regularization methods and achieves the best known explicit convergence rates in both levels of BVI. These convergence rate guarantees are obtained using fixed or variable step-size policies, showcasing our method's flexibility. In addition, we improve the convergence rate and operator complexity if we have a smooth inner problem. Our proposed scheme matches the best existing convergence rates in the literature, whereas no work assumes stochasticity of the operators. Many interesting research directions exist, such as stochastic VI-constrained nonconvex optimization or last iterate methods.
\newpage
\appendix

\section{Appendix}
\subsection{Analysis of Lemma \ref{lem:one-iteration-iropex}}
First, we must state a key lemma known as the "three-point" lemma, which is crucial in the proof of Lemma \ref{lem:one-iteration-iropex}. 
 
 \begin{lemma}\label{lem:three-point}
     (See the proof in \cite{boob2023stochastic}) Let $x^*$ be a solution of problem $\min_{x\in X}\{h(x) + \tfrac{\lambda}{2}\gnorm{x-\hat{x}}{}{2}\} $ where $h(x)$ is a convex function. Then, 
     \begin{equation}\label{eq:three-point}
         h(x^*) - h(x) \leq \tfrac{\lambda}{2}[\gnorm{x-\hat{x}}{}{2} - \gnorm{x^*-x}{}{2}- \gnorm{x^*-\hat{x}}{}{2}]\quad \forall x\in X. 
     \end{equation}
 \end{lemma}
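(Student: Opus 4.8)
The plan is to exploit the fact that the proximal objective $\phi(x) := h(x) + \tfrac{\lambda}{2}\gnorm{x-\hat{x}}{}{2}$ is $\lambda$-strongly convex, being the sum of the convex function $h$ and the $\lambda$-strongly convex quadratic $\tfrac{\lambda}{2}\gnorm{x-\hat{x}}{}{2}$. Because $x^*$ minimizes $\phi$ over the convex set $X$, this structure forces a quadratic gap between $\phi(x)$ and $\phi(x^*)$ uniformly over $X$. Concretely, I would first invoke the first-order optimality condition for $x^*$: there exists a subgradient $g \in \partial h(x^*)$ such that $\inprod{g + \lambda(x^*-\hat{x})}{x-x^*} \geq 0$ for every $x \in X$. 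This variational-inequality characterization of a constrained minimizer is the step that must be handled with care, since $h$ is merely convex and possibly nonsmooth, so one works with an element of the subdifferential rather than a gradient; this is the only genuinely delicate point in the argument.

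Next I would combine the optimality condition with the convexity of $h$. Convexity gives $h(x) - h(x^*) \geq \inprod{g}{x-x^*}$, equivalently $h(x^*)-h(x) \leq -\inprod{g}{x-x^*}$. Rearranging the optimality condition yields $\inprod{g}{x-x^*} \geq \lambda\inprod{\hat{x}-x^*}{x-x^*}$, and substituting this bound produces
\begin{equation*}
    h(x^*)-h(x) \leq \lambda\inprod{x^*-\hat{x}}{x-x^*} \qquad \forall x\in X.
\end{equation*}

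Finally, I would convert the inner product on the right-hand side into the three squared-norm terms via the polarization identity. Writing $a = x^*-\hat{x}$ and $b = x-x^*$ so that $a+b = x-\hat{x}$, the expansion $\gnorm{a+b}{}{2} = \gnorm{a}{}{2} + 2\inprod{a}{b} + \gnorm{b}{}{2}$ gives $\inprod{x^*-\hat{x}}{x-x^*} = \tfrac{1}{2}[\gnorm{x-\hat{x}}{}{2} - \gnorm{x^*-\hat{x}}{}{2} - \gnorm{x-x^*}{}{2}]$. Multiplying by $\lambda$ and substituting recovers exactly \eqref{eq:three-point}. The computation is otherwise mechanical once the optimality condition is secured; an equally clean route that sidesteps subgradients entirely is to apply the strong-convexity lower bound $\phi(x) \geq \phi(x^*) + \tfrac{\lambda}{2}\gnorm{x-x^*}{}{2}$, valid at any constrained minimizer of a $\lambda$-strongly convex function, and then expand the quadratic terms of $\phi$ on both sides to isolate $h(x^*)-h(x)$.
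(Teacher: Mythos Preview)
Your argument is correct and is the standard proof of the three-point inequality: optimality of $x^*$ over $X$ gives a subgradient $g\in\partial h(x^*)$ with $\inprod{g+\lambda(x^*-\hat{x})}{x-x^*}\ge 0$, convexity of $h$ turns this into $h(x^*)-h(x)\le \lambda\inprod{x^*-\hat{x}}{x-x^*}$, and the polarization identity yields \eqref{eq:three-point}. The paper itself does not supply a proof of this lemma---it simply cites an external reference---so there is no in-paper argument to compare against; your derivation (and the alternative strong-convexity route you sketch) is exactly what that reference contains.
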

\subsubsection{Proof of Lemma \ref{lem:one-iteration-iropex}}
\begin{proof}
     From optimality of $x_{k+1}$ and Lemma \ref{lem:three-point}, we have the following 
     \begin{equation*}
    \begin{aligned}
         \inprod{\Ffrak_{k}+ \eta_k\Hfrak_{k}}{x_{k+1}-x}\leq &\tfrac{1}{2\gamma_k}[\gnorm{x-x_{k}}{}{2}-\gnorm{x-x_{k+1}}{}{2}-\gnorm{x_{k+1}-x_{k}}{}{2}]  - \theta_k\inprod{\Delta \Ofrak_k}{x_{k+1}-x},
     \end{aligned}
     \end{equation*}
Then from the definition of $\Delta \Ofrak_k$ we have 
 \begin{equation}\label{eq:recur-stoch}
 \begin{aligned}
      \inprod{\Ffrak_{k+1}+ \eta_k\Hfrak_{k+1}}{x_{k+1}-x}\leq &\tfrac{1}{2\gamma_k}[\gnorm{x-x_{k}}{}{2}-\gnorm{x-x_{k+1}}{}{2}-\gnorm{x_{k+1}-x_{k}}{}{2}]\\
     &+ \inprod{\Delta \Ofrak_{k+1}}{x_{k+1}-x} - \theta_k\inprod{\Delta \Ofrak_k}{x_{k}-x}
       -\theta_k\inprod{\Delta \Ofrak_k}{x_{k+1}-x_k}
 \end{aligned}
     \end{equation}
     Note that from definition of $\delta_k^F$ and $\delta_k^H$, and from strong monotonicity of $H$ with modules $\mu_H$, we have the following relations 
         \begin{equation}\label{eq:delta-last-error}
         \Scale[.8]{
         \begin{aligned}
             \inprod{\Ffrak_{k+1} + \eta_k \Hfrak_{k+1}}{x_{k+1}-x} &= \inprod{F(x_{k+1}) + \eta_k H_{k+1}}{x_{k+1}-x} + \inprod{\delta_{k+1}^F + \eta_k \delta_{k+1}^H}{x_{k+1}-x}\\
             &\geq \inprod{F(x) + \eta_k H(x)}{x_{k+1}-x} + \inprod{\delta_{k+1}^F + \eta_k \delta_{k+1}^H}{x_{k+1}-x} +  \eta_k\mu_H\gnorm{x-x_{k+1}}{}{2},
         \end{aligned}}
         \end{equation}
         \begin{equation}\label{eq:recur-last-error}
             \begin{aligned}
                 \theta_k \inprod{\Delta \Ofrak_k}{x_{k+1}-x_k} = \theta_k \inprod{\Delta O_k}{x_{k+1}-x_k} + \theta_k \inprod{\delta_k^F -\delta_{k-1}^F+ \eta_{k-1}[\delta_k^H -\delta_{k-1}^H]}{x_{k+1}-x_k},
             \end{aligned}
         \end{equation}
         where $\Delta O_k = F(x_k) -F(x_{k-1}) + \eta_{k-1}[H(x_k) - H(x_{k-1})]$. Therefore, from \eqref{eq:recur-stoch}, \eqref{eq:delta-last-error} and \eqref{eq:recur-last-error}, we have 
         \begin{equation}\label{eq:recur-stoch-2}
             \begin{aligned}
                  &\inprod{F(x) + \eta_k H(x)}{x_{k+1}-x}\leq \tfrac{1}{2\gamma_k}[\gnorm{x-x_{k}}{}{2}-\gnorm{x_{k+1}-x_{k}}{}{2}]
                  \\&- (\tfrac{1}{2\gamma_k}  + \eta_k\mu_H)\gnorm{x-x_{k+1}}{}{2}\\
     &+ \inprod{\Delta \Ofrak_{k+1}}{x_{k+1}-x} - \theta_k\inprod{\Delta \Ofrak_k}{x_{k}-x}-\inprod{\delta_{k+1}^F + \eta_k \delta_{k+1}^H}{x_{k+1}-x}\\
       &-\theta_k \inprod{\Delta O_k}{x_{k+1}-x_k} - \theta_k \inprod{\delta_k^F -\delta_{k-1}^F+ \eta_{k-1}[\delta_k^H -\delta_{k-1}^H]}{x_{k+1}-x_k}.
             \end{aligned}
         \end{equation}
     Consequently, one can derive the upper bound for -$\theta_k \inprod{\Delta O_k}{x_{k+1}-x_k}$ from \eqref{eq:F-Lipschitz-property} and \eqref{eq:H-Lipschitz-property} and use \eqref{eq:recur-stoch-2}, to obtain \eqref{eq:one-iteration-iropex}. 
     \end{proof}
     \subsection{Convergence analysis for the general BVI}
     This section is dedicated to the detailed illustration of the proofs and materials we used to obtain the results of $\iropex$ for the general problem. We divide this section into two cases depending on whether we assume strong monotonicity.
     \subsubsection{Convergence analysis for the monotone problem}\label{sec:analysis-monotone}
This section provides the necessary analysis for the proof of Theorems \ref{thm:convex-case-optim-feasib}, \ref{thm:lower-bound-mono}. Moreover, we provide a detailed illustration analysis regarding Remark \ref{rem:alpha-H}. The following lemma states the necessary conditions to obtain the convergence results. First, let us mention the following useful propositions.
    \begin{proposition}\label{prop:tech_res1-mo}
	Let $\rho_1, \dots, \rho_j$ be a sequence of elements in $\Rbb^n$ and let $S$ be a convex set in $\Rbb^n$. Define the sequence $v_t, t = 1,\dots$, as follows: $v_1 \in S$ and 
	$%\begin{equation*}
	v_{t+1} = \argmin_{x \in S} \inprod{\rho_t}{x} + \tfrac{1}{2}\gnorm{x-v_t}{}{2}.
	$ %\end{equation*}
	Then, for any $x \in S$ and $t \ge 0$, the following inequality holds
	\begin{equation}\label{eq:int_rel114_1-mo}
		\inprod{\rho_t}{v_t-x} \le \tfrac{1}{2}\gnorm{x-v_t}{}{2}-\tfrac{1}{2} \gnorm{x-v_{t+1}}{}{2}+ \tfrac{1}{2}\gnorm{\rho_t}{}{2},
	\end{equation}
	% \begin{equation}\label{eq:int_rel114}
	% \tsum_{ t =0}^{j} \inprod{\rho_t}{v_t-x} \le \tfrac{1}{2}\gnorm{x-v_0}{2}{2} + \tfrac{1}{2}\tsum_{t=0}^{j} \gnorm{\rho_t}{2}{2}.
	% \end{equation}
\end{proposition}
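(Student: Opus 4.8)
The plan is to prove the inequality \eqref{eq:int_rel114_1-mo} by combining the three-point lemma (Lemma \ref{lem:three-point}) with Young's inequality. First I would apply Lemma \ref{lem:three-point} with the choice $h(x) = \inprod{\rho_t}{x}$, which is linear and hence convex, $\lambda = 1$, and $\hat x = v_t$, so that $v_{t+1}$ is exactly the minimizer defining the update. This immediately gives
\begin{equation*}
    \inprod{\rho_t}{v_{t+1}} - \inprod{\rho_t}{x} \le \tfrac{1}{2}\gnorm{x-v_t}{}{2} - \tfrac{1}{2}\gnorm{v_{t+1}-x}{}{2} - \tfrac{1}{2}\gnorm{v_{t+1}-v_t}{}{2}
\end{equation*}
for every $x \in S$.

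Next I would rewrite the left-hand side of the target inequality by inserting $v_{t+1}$: namely $\inprod{\rho_t}{v_t - x} = \inprod{\rho_t}{v_{t+1} - x} + \inprod{\rho_t}{v_t - v_{t+1}}$. Substituting the bound above handles the first term, so it remains to absorb $\inprod{\rho_t}{v_t - v_{t+1}}$ using the leftover negative term $-\tfrac12\gnorm{v_{t+1}-v_t}{}{2}$. By Young's inequality (equivalently, $ab \le \tfrac12 a^2 + \tfrac12 b^2$ applied coordinate-wise via Cauchy–Schwarz), $\inprod{\rho_t}{v_t - v_{t+1}} \le \tfrac12\gnorm{\rho_t}{}{2} + \tfrac12\gnorm{v_t - v_{t+1}}{}{2}$. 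Adding this to the three-point bound, the $\pm\tfrac12\gnorm{v_{t+1}-v_t}{}{2}$ terms cancel, leaving precisely $\inprod{\rho_t}{v_t - x} \le \tfrac12\gnorm{x-v_t}{}{2} - \tfrac12\gnorm{x-v_{t+1}}{}{2} + \tfrac12\gnorm{\rho_t}{}{2}$, which is \eqref{eq:int_rel114_1-mo}.

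There is essentially no serious obstacle here; the statement is a standard one-step estimate for the proximal/mirror-descent-type update, and the only mild care needed is the bookkeeping of which quadratic terms cancel. If one prefers to avoid invoking Lemma \ref{lem:three-point}, an alternative is to use the first-order optimality condition for $v_{t+1}$ directly: $\inprod{\rho_t + v_{t+1} - v_t}{x - v_{t+1}} \ge 0$ for all $x \in S$, then expand $\inprod{v_{t+1}-v_t}{x-v_{t+1}}$ via the identity $2\inprod{a-b}{c-a} = \gnorm{b-c}{}{2} - \gnorm{a-b}{}{2} - \gnorm{a-c}{}{2}$ and finish with the same Young's inequality step. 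Both routes are short; I would present the one via Lemma \ref{lem:three-point} for consistency with the rest of the appendix. The proof applies for all $t \ge 1$ (and the degenerate case $t=0$ is vacuous or handled identically once $v_1 \in S$ is fixed), and requires only convexity of $S$ and $v_1 \in S$, exactly the hypotheses stated.
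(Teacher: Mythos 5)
Your proof is correct and follows essentially the same route as the paper's: apply Lemma \ref{lem:three-point} with the linear function $\inprod{\rho_t}{x}$ to get the three-point inequality, then absorb $\inprod{\rho_t}{v_t-v_{t+1}}$ with Young's inequality so that the $\pm\tfrac{1}{2}\gnorm{v_{t+1}-v_t}{}{2}$ terms cancel. The bookkeeping matches the paper's argument exactly, so no changes are needed.
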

\begin{proof}
	Using Lemma \ref{lem:three-point} with $g(x) =  \inprod{\rho_t}{x}$ 
	%, $W(y,x) = \tfrac{1}{2}\gnorm{y-x}{}{2}$ %, $\wt{x} = v_t$ 
, we have, due to the optimality of $v_{t+1}$,
	\begin{equation*}
		\inprod{\rho_t}{v_{t+1}-x} + \tfrac{1}{2}\gnorm{v_{t+1}-v_t}{2}{2} + \tfrac{1}{2} \gnorm{x-v_{t+1}}{}{2}\le  \tfrac{1}{2}\gnorm{x-v_t}{}{2},
	\end{equation*}
	is satisfied for all $x \in S$. The above relation and the fact
	$
	\inprod{\rho_t}{v_t-v_{t+1}} -\tfrac{1}{2}\gnorm{v_{t+1}-v_t}{}{2} \le \tfrac{1}{2}\gnorm{\rho_t}{}{2}$,
	imply \eqref{eq:int_rel114_1-mo}. 
	%    that 
	% \begin{equation*}
	% \inprod{\rho_t}{v_t-x} \le \tfrac{1}{2}\gnorm{x-v_t}{}{2}-\tfrac{1}{2} \gnorm{x-v_{t+1}}{}{2}+ \tfrac{1}{2}\gnorm{\rho_t}{}{2},
	% \end{equation*}
	% for all $x \in S$. Summing up the above relations from $t = 0$ to $j$ and noting the nonnegativity of $\gnorm{\cdot}{2}{2}$, we obtain \eqref{eq:int_rel114}. 
	Hence, we conclude the proof.
\end{proof}
\begin{lemma}\label{lemma:aug-seq}
    Consider the sequences $\{x^a_k\}_{k\geq 1}$, as follows
	\begin{equation}\label{eq:x-aug-seq_def-mo}
		\begin{aligned}
			&x_2^a =x_1^a =x_1, 
			\quad &&	x_{k+1}^a := \argmin_{x \in X_F^*} -\inprod{\gamma_{k-1}\delta_k}{x} + \tfrac{1}{2}\gnorm{x-x_k^a}{}{2},
			\quad&&\forall k\geq 2.
            \end{aligned}
        \end{equation}
      For $ \delta_ k  = \delta_{k}^F + \eta_k\delta_{k+1}^H$.  Suppose $ \tfrac{\tau_k}{\gamma_k\eta_k}\leq \tfrac{\tau_{k-1}}{\gamma_{k-1}\eta_{k-1}},$ for $k\geq 2$. Then, we have 
        \begin{equation}\label{eq:stoch_innerprod_delta-mo}
            \begin{aligned}
			&\forall x\in X_F^*,
			&&\tsum_{k=1}^{K-1} \tfrac{\tau_k}{\eta_k}\inprod{\delta_{k+1}}{x-{x}_{k+1}^a} 
			&&\le \tfrac{\tau_1}{2\gamma_1\eta_1}\gnorm{x- x_1}{}{2} 
			+ \tsum_{k=1}^{K-1} \tfrac{\gamma_k\tau_k}{2\eta_k}\gnorm{\delta_{k+1}}{}{2},
            \end{aligned}
        \end{equation}
\end{lemma}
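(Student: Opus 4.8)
The plan is to read Lemma~\ref{lemma:aug-seq} as a \emph{weighted telescoping estimate} obtained by applying Proposition~\ref{prop:tech_res1-mo} to the auxiliary sequence $\{x_k^a\}$ and then summing against the weights $\tfrac{\tau_k}{\eta_k}$. The recursion in \eqref{eq:x-aug-seq_def-mo} is exactly the template of Proposition~\ref{prop:tech_res1-mo} with $S=X_F^*$, the iterate $v_t$ playing the role of $x_k^a$, and linear term $\rho_t=-\gamma_{k-1}\delta_k$. Hence, for every $k\ge 1$ and every $x\in X_F^*$, Proposition~\ref{prop:tech_res1-mo} applied at index $k+1$ gives
\[
\gamma_k\inprod{\delta_{k+1}}{x-x_{k+1}^a}\le \tfrac12\gnorm{x-x_{k+1}^a}{}{2}-\tfrac12\gnorm{x-x_{k+2}^a}{}{2}+\tfrac{\gamma_k^2}{2}\gnorm{\delta_{k+1}}{}{2}.
\]

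First I would divide by $\gamma_k>0$ and multiply by $\tfrac{\tau_k}{\eta_k}>0$, so that for each $k\in\{1,\dots,K-1\}$,
\[
\tfrac{\tau_k}{\eta_k}\inprod{\delta_{k+1}}{x-x_{k+1}^a}\le \tfrac{\tau_k}{2\gamma_k\eta_k}\big(\gnorm{x-x_{k+1}^a}{}{2}-\gnorm{x-x_{k+2}^a}{}{2}\big)+\tfrac{\gamma_k\tau_k}{2\eta_k}\gnorm{\delta_{k+1}}{}{2}.
\]
Summing over $k=1,\dots,K-1$ immediately produces the noise term $\sum_{k=1}^{K-1}\tfrac{\gamma_k\tau_k}{2\eta_k}\gnorm{\delta_{k+1}}{}{2}$ on the right-hand side, so the only remaining task is to bound the weighted telescoping sum $\tfrac12\sum_{k=1}^{K-1}b_k(a_k-a_{k+1})$, where $b_k:=\tfrac{\tau_k}{\gamma_k\eta_k}$ and $a_k:=\gnorm{x-x_{k+1}^a}{}{2}\ge 0$.

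The main (and essentially only) subtlety is this weighted telescoping: since the $b_k$ are not constant, one cannot telescope directly. I would apply Abel summation to rewrite it as $b_1a_1+\sum_{k=2}^{K-1}(b_k-b_{k-1})a_k-b_{K-1}a_K$, and then invoke the hypothesis $\tfrac{\tau_k}{\gamma_k\eta_k}\le\tfrac{\tau_{k-1}}{\gamma_{k-1}\eta_{k-1}}$ for $k\ge 2$, which says exactly that $\{b_k\}$ is nonincreasing; hence $b_k-b_{k-1}\le 0$, and since $a_k\ge 0$ the middle sum is nonpositive, as is the term $-b_{K-1}a_K$. This leaves $\tfrac12\sum_{k=1}^{K-1}b_k(a_k-a_{k+1})\le \tfrac12 b_1 a_1=\tfrac{\tau_1}{2\gamma_1\eta_1}\gnorm{x-x_2^a}{}{2}$, and substituting $x_2^a=x_1$ from \eqref{eq:x-aug-seq_def-mo} identifies this with $\tfrac{\tau_1}{2\gamma_1\eta_1}\gnorm{x-x_1}{}{2}$. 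Combining with the noise term yields \eqref{eq:stoch_innerprod_delta-mo}. Beyond bookkeeping, the only points demanding care are the index shift between $\delta_{k+1}$ in the statement and the $\delta_k,\gamma_{k-1}$ appearing in the definition of $x_{k+1}^a$, and checking that the Abel-summation boundary terms carry the correct sign so that the monotonicity hypothesis can discard them.
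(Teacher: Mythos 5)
Your proposal is correct and follows essentially the same route as the paper: apply Proposition \ref{prop:tech_res1-mo} to the auxiliary recursion \eqref{eq:x-aug-seq_def-mo} to get the one-step bound \eqref{eq:recur-augmented}, rescale by $\tfrac{\tau_k}{\gamma_k\eta_k}$, sum, and use the monotonicity of $\tfrac{\tau_k}{\gamma_k\eta_k}$ together with $x_2^a=x_1$ to collapse the weighted telescoping sum to $\tfrac{\tau_1}{2\gamma_1\eta_1}\gnorm{x-x_1}{}{2}$. Your explicit Abel-summation step merely spells out what the paper leaves implicit, so the two arguments coincide.
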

\begin{proof}
    Noting the definition of $ \{x^a_k\}_{k\geq 2}$ in \eqref{eq:x-aug-seq_def-mo} and applying Proposition \ref{prop:tech_res1-mo} we have 
    \begin{equation}\label{eq:recur-augmented}
         - \inprod{\gamma_{k}\delta_{k+1}}{x_{k+1}^a -x}\leq \tfrac{1}{2}\gnorm{x-x_{k+1}^a}{}{2} - \tfrac{1}{2}\gnorm{x-x_{k+2}^a}{}{2} + \tfrac{\gamma_{k}^2}{2}\gnorm{\delta_{k+1}}{}{2},
    \end{equation} 
    Multiplying the above relation by $\tfrac{\tau_k}{\eta_k}$, summing it over $k=1$ to $K-1$ given that $\tfrac{\tau_k}{\gamma_k\eta_k}\leq \tfrac{\tau_{k-1}}{\gamma_{k-1}\eta_{k-1}}$, we get \eqref{eq:stoch_innerprod_delta-mo}. 
\end{proof}
\paragraph{Analysis of Theorem \ref{thm:convex-case-optim-feasib} }
\begin{lemma}\label{lemma:optim-feasib}
    Let us consider the step-size policy that satisfies the following conditions
    \begin{subequations}
        \begin{equation}\label{eq:cond-1-conv}
            \tfrac{\tau_k}{\gamma_k\eta_k}\leq \tfrac{\tau_{k-1}}{\gamma_{k-1}\eta_{k-1}},\quad \tfrac{\tau_k\theta_k}{\eta_k} =  \tfrac{\tau_{k-1}}{\eta_{k-1}},\quad  \theta_k (L_F^2 + \eta_k^2L_H^2)\leq \tfrac{1}{50\gamma_k\gamma_{k-1}}
        \end{equation}
        \begin{equation}\label{eq:cond-2-conv}
            \tfrac{\tau_k}{\gamma_k}\leq \tfrac{\tau_{k-1}}{\gamma_{k-1}},\quad \tau_k\theta_k =  \tau_{k-1},
        \end{equation}
    \end{subequations}
    Then, we have the optimality and feasibility bounds mentioned in Theorem \ref{thm:convex-case-optim-feasib}. 
\end{lemma}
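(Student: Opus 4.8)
The plan is to convert the one-iteration estimate of Lemma~\ref{lem:one-iteration-iropex} (specialized to $\mu_H = 0$) into two weighted telescoping sums: one with weights $\tau_k/\eta_k$ and reference points $x\in X_F^*$, producing the optimality gap, and one with weights $\tau_k$ and reference points $x\in X$, producing the feasibility gap. In each case I multiply \eqref{eq:one-iteration-iropex} by the appropriate weight and sum over $k=1,\dots,K-1$. The identity $\tfrac{\tau_k\theta_k}{\eta_k}=\tfrac{\tau_{k-1}}{\eta_{k-1}}$ (respectively $\tau_k\theta_k=\tau_{k-1}$) makes the extrapolation pair $\inprod{\Delta\Ofrak_{k+1}}{x_{k+1}-x}-\theta_k\inprod{\Delta\Ofrak_k}{x_k-x}$ cancel across consecutive indices, leaving only the terminal term $\tfrac{\tau_{K-1}}{\eta_{K-1}}\inprod{\Delta\Ofrak_K}{x_K-x}$ (the $k=1$ boundary term is $0$ since $x_0=x_1$ forces $\Delta\Ofrak_1=0$), while the monotonicity $\tfrac{\tau_k}{\gamma_k\eta_k}\le\tfrac{\tau_{k-1}}{\gamma_{k-1}\eta_{k-1}}$ (respectively $\tfrac{\tau_k}{\gamma_k}\le\tfrac{\tau_{k-1}}{\gamma_{k-1}}$) makes the distance terms $\tfrac{1}{2\gamma_k}[\norm{x-x_k}^2-\norm{x-x_{k+1}}^2]$ collapse to $\tfrac{\tau_1}{2\gamma_1\eta_1}\norm{x-x_1}^2$ plus nonpositive remainders, one of which, $-\tfrac{\tau_{K-1}}{2\gamma_{K-1}\eta_{K-1}}\norm{x-x_K}^2$, I keep in reserve to absorb the quadratic part of the terminal extrapolation term after a Young split.

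Next I dispose of the deterministic curvature terms. The cross term $\theta_k(L_F+\eta_{k-1}L_H)\norm{x_k-x_{k-1}}\norm{x_{k+1}-x_k}$ is split by Young's inequality into multiples of $\norm{x_{k+1}-x_k}^2$ and $\norm{x_k-x_{k-1}}^2$; using $(L_F+\eta L_H)^2\le 2(L_F^2+\eta^2L_H^2)$ together with $\theta_k(L_F^2+\eta_k^2L_H^2)\le\tfrac{1}{50\gamma_k\gamma_{k-1}}$, these are absorbed into the negative progress terms $-\tfrac{\tau_k}{2\gamma_k}\norm{x_{k+1}-x_k}^2$ (scaled by $1/\eta_k$ in the optimality computation) with constant to spare. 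Likewise the nonsmooth term $\theta_k(M_F+\eta_{k-1}M_H)\norm{x_{k+1}-x_k}$, the error-difference term $\theta_k\inprod{\delta_k^F-\delta_{k-1}^F+\eta_{k-1}(\delta_k^H-\delta_{k-1}^H)}{x_{k+1}-x_k}$, and the leftover $\tfrac{\gamma_{K-1}}{2}\norm{\Delta\Ofrak_K}^2$ from the terminal term are Young-split against the remaining $\norm{x_{k+1}-x_k}^2$ budget, producing terms proportional to $\gamma_k\tau_k$ times $M_F^2+\eta_{k-1}^2M_H^2$ and $\norm{\delta_k^F-\delta_{k-1}^F}^2+\eta_{k-1}^2\norm{\delta_k^H-\delta_{k-1}^H}^2$.

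For the genuinely stochastic term $-\inprod{\delta_{k+1}^F+\eta_k\delta_{k+1}^H}{x_{k+1}-x}$ I use the auxiliary sequence of Lemma~\ref{lemma:aug-seq} (and its analogue over $X$, with weights $\tau_k$, for the feasibility estimate): writing $\inprod{\delta_{k+1}}{x_{k+1}-x}=\inprod{\delta_{k+1}}{x_{k+1}-x_{k+1}^a}+\inprod{\delta_{k+1}}{x_{k+1}^a-x}$, the second piece is bounded uniformly over $x\in X_F^*$ by \eqref{eq:stoch_innerprod_delta-mo} with $\tfrac{\tau_1}{2\gamma_1\eta_1}\norm{x-x_1}^2+\sum_k\tfrac{\gamma_k\tau_k}{2\eta_k}\norm{\delta_{k+1}}^2$, while the first piece has zero conditional expectation because both $x_{k+1}$ and $x_{k+1}^a$ are measurable with respect to the history preceding the noise entering $\delta_{k+1}$. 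Collecting everything, dividing by $\sum_k\tau_k/\eta_k=K-1$ (respectively $\sum_k\tau_k=K-1$), using linearity of the gap in the averaged iterate $\bar x_K$, and dropping the nonnegative quantity $\inprod{F(x)}{x_{k+1}-x}\ge 0$ valid for $x\in X_F^*$ (since each such $x$ solves $\text{VI}(F,X)$ and $x_{k+1}\in X$), I obtain $\Ebb[\gap(\bar x_K,H,X_F^*)]\le$ the right-hand side of \eqref{eq:mono-optim}. For the feasibility gap the only new feature is the extra term $-\eta_k\inprod{H(x)}{x_{k+1}-x}$, bounded by $2\eta_k C_H D_X$ via \eqref{eq:boundedness-H}, which after averaging contributes the $\tfrac{2C_H}{K^{1/4}}$ summand in \eqref{eq:mono-feasib}. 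Substituting the oracle variance bounds \eqref{eq:stoch-oracle-F}--\eqref{eq:stoch-oracle-H} ($\Ebb[\norm{\delta_k^F}^2]\le\sigma_F^2$, $\Ebb[\norm{\delta_k^H}^2]\le\sigma_H^2$, hence $\Ebb[\norm{\delta_k^F-\delta_{k-1}^F}^2]\le 2\Ebb[\norm{\delta_k^F}^2]+2\Ebb[\norm{\delta_{k-1}^F}^2]\le 4\sigma_F^2$) and the closed-form step sizes \eqref{eq:step-size-con-bivel}, then simplifying, yields the stated rates. The lower bound $-B_H\Ebb[\dist(\bar x_K,X_F^*)]\le\Ebb[\gap(\bar x_K,H,X_F^*)]$ follows by taking $x=\proj_{X_F^*}(\bar x_K)\in X_F^*$ in the maximum defining the gap and applying Cauchy--Schwarz with $\norm{H(\proj_{X_F^*}(\bar x_K))}\le B_H$ from \eqref{eq:boundedness-H}.

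The main obstacle is controlling the stochastic noise uniformly over the random maximizer $x$ in the gap — this is precisely what forces the introduction of the auxiliary sequence $\{x_k^a\}$ and the measurability/conditioning argument behind Lemma~\ref{lemma:aug-seq} — together with the constant bookkeeping needed so that every Young split fits inside the $\tfrac{1}{2\gamma_k}\norm{x_{k+1}-x_k}^2$ budget (hence the deliberately loose factor $50$ in \eqref{eq:cond-1-conv}); the telescoping itself is routine once the step-size identities \eqref{eq:cond-1-conv}--\eqref{eq:cond-2-conv} are imposed.
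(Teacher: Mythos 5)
Your proposal follows essentially the same route as the paper's proof: the same weighted telescoping of Lemma \ref{lem:one-iteration-iropex} (weights $\tau_k/\eta_k$ with reference points in $X_F^*$ for optimality, weights $\tau_k$ over $X$ for feasibility), the same Young splits of the curvature, nonsmooth, and noise-difference terms against the $\norm{x_{k+1}-x_k}^2$ budget, the same auxiliary-sequence and conditioning argument from Lemma \ref{lemma:aug-seq} for the stochastic term, and the same projection-plus-Cauchy--Schwarz argument for the lower bound. The only quibbles are cosmetic and do not affect the rates: the normalizer for $\bar x_K$ is $\sum_k\tau_k$ rather than $\sum_k\tau_k/\eta_k$, and the paper's bound $\Ebb[\norm{\delta_k^F-\delta_{k-1}^F}^2]\le 2\sigma_F^2$ (via the vanishing conditional cross term) is sharper than your crude $4\sigma_F^2$.
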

\begin{proof}

First, note that \eqref{eq:step-size-con-bivel} satisfies \eqref{eq:cond-1-conv} and \eqref{eq:cond-2-conv}. Regarding Lemma \ref{lem:one-iteration-iropex} and taking $x=x^*_F\in X^*_F$ as an arbitrary solution to the inner problem $\text{VI}(F,X)$, we know that $\inprod{F(x^*_F)}{x_{k+1}-x^*_F}\geq 0$. Thus, noting $\eta_k\geq 0$, one can rewrite \eqref{eq:one-iteration-iropex} as follows 
\begin{equation}\label{eq:mono-optim-feasib-1}
\Scale[0.9]{
    \begin{aligned}
        &\inprod{H(x_F^*)}{x_{k+1}-x^*_F}\leq \tfrac{1}{2\gamma_k\eta_k}[\gnorm{x^*_F-x_{k}}{}{2}-\gnorm{x^*_F-x_{k+1}}{}{2} -  \gnorm{x_{k+1}-x_{k}}{}{2}]\\
        &+ \tfrac{1}{\eta_k}\inprod{\Delta \Ofrak_{k+1}}{x_{k+1}-x^*_F} - \tfrac{\theta_k}{\eta_k}\inprod{\Delta \Ofrak_k}{x_k-x^*_F}
         + \tfrac{\theta_k}{\eta_k}(L_F + \eta_{k-1} L_H)\gnorm{x_k-x_{k-1}}{}{}\gnorm{x_{k+1}-x_{k}}{}{} \\
         &+ \tfrac{\theta_k}{\eta_k}(M_F + \eta_{k-1}M_H)\gnorm{x_{k+1}-x_k}{}{}.\\
         & -\tfrac{1}{\eta_k}\inprod{\delta_{k+1}^F + \eta_k \delta_{k+1}^H}{x_{k+1}-x^*_F} - \tfrac{\theta_k}{\eta_k}\inprod{\delta_k^F -\delta_{k-1}^F+ \eta_{k-1}[\delta_k^H -\delta_{k-1}^H]}{x_{k+1}-x_k}.
    \end{aligned}}
\end{equation}
Multiplying both sides by $\tau_k\geq 0$ and summing up \eqref{eq:mono-optim-feasib-1} from $k=1$ to $K-1$, we have the following 
\begin{equation}\label{eq:mono-optim-feasib-2}
\Scale[1]{
    \begin{aligned}
        &\tsum_{k=1}^{K-1}\inprod{H(x_F^*)}{\tau_k(x_{k+1}-x^*_F)}\leq \tsum_{k=1}^{K-1}\tfrac{\tau_k}{2\gamma_k\eta_k}[\gnorm{x^*_F-x_{k}}{}{2}-\gnorm{x^*_F-x_{k+1}}{}{2}]\\
        &+ \tsum_{k=1}^{K-1}\tfrac{\tau_k}{\eta_k}\inprod{\Delta \Ofrak_{k+1}}{x_{k+1}-x^*_F} - \tfrac{\tau_k\theta_k}{\eta_k}\inprod{\Delta \Ofrak_k}{x_k-x^*_F}\\
         &+ \tsum_{k=1}^{K-1}\big[\tfrac{\tau_k\theta_k}{\eta_k}(L_F + \eta_{k-1} L_H)\gnorm{x_k-x_{k-1}}{}{}\gnorm{x_{k+1}-x_{k}}{}{} - \tfrac{\tau_k}{10\gamma_k\eta_k} \gnorm{x_{k+1}-x_{k}}{}{2}\\
         &- \tfrac{\tau_{k-1}}{10\gamma_{k-1}\eta_{k-1}} \gnorm{x_{k}-x_{k-1}}{}{2}]+ \tsum_{k=1}^{K-1}\tfrac{\tau_k\theta_k}{\eta_k}(M_F + \eta_{k-1}M_H)\gnorm{x_{k+1}-x_k}{}{}-\tfrac{\tau_k}{10\gamma_k\eta_k} \gnorm{x_{k+1}-x_{k}}{}{2}\\
         &- \tsum_{k=1}^{K-1}\tfrac{\tau_k}{\eta_k}\inprod{\delta_{k+1}^F + \eta_k \delta_{k+1}^H}{x_{k+1}-x^*_F}\\
         & -  \tsum_{k=1}^{K-1}\tfrac{\tau_k\theta_k}{\eta_k}\inprod{\delta_k^F -\delta_{k-1}^F}{x_{k+1}-x_k}-\tfrac{\tau_k}{10\gamma_k\eta_k} \gnorm{x_{k+1}-x_{k}}{}{2}\\
         & - \tsum_{k=1}^{K-1}\tfrac{\tau_k\theta_k\eta_{k-1}}{\eta_k}\inprod{\delta_k^H -\delta_{k-1}^H}{x_{k+1}-x_k}-\tfrac{\tau_k}{10\gamma_k\eta_k} \gnorm{x_{k+1}-x_{k}}{}{2}
    \end{aligned}}
\end{equation}
Moreover from Young's inequality ($ab\leq \tfrac{\epsilon}{2}a^2 + \tfrac{1}{2\epsilon}b^2$),  we have the followings 
\begin{subequations}\label{eq:smooth-stoch}
    \begin{equation}\label{eq:non-smooth-youngs}
        \tfrac{\tau_k\theta_k}{\eta_k}(M_F + \eta_{k-1}M_H)\gnorm{x_{k+1}-x_k}{}{}-\tfrac{\tau_k}{10\gamma_k\eta_k} \gnorm{x_{k+1}-x_{k}}{}{2}\leq \tfrac{5\theta^2\gamma_k\tau_k}{\eta_k}(M_F^2+\eta_{k-1}^2M_H^2) 
    \end{equation}
    \begin{equation}\label{eq:stoch-F-youngs-last}
        \tfrac{\tau_k\theta_k}{\eta_k}\inprod{\delta_k^F -\delta_{k-1}^F}{x_{k+1}-x_k}-\tfrac{\tau_k}{10\gamma_k\eta_k} \gnorm{x_{k+1}-x_{k}}{}{2} \leq \tfrac{5\theta^2\gamma_k\tau_k\gnorm{\delta_k^F -\delta_{k-1}^F}{}{2}}{\eta_k}
    \end{equation}
    \begin{equation}\label{eq:stoch-F-youngs}
        \tfrac{\tau_k\theta_k\eta_{k-1}}{\eta_k}\inprod{\delta_k^H -\delta_{k-1}^H}{x_{k+1}-x_k}-\tfrac{\tau_k}{10\gamma_k\eta_k} \gnorm{x_{k+1}-x_{k}}{}{2} \leq \tfrac{5\theta^2\gamma_k\tau_k\eta_{k-1}^2\gnorm{\delta_k^H -\delta_{k-1}^H}{}{2}}{\eta_k}
    \end{equation}
\end{subequations}
Now, from the monotonicity of $H$ and the conditions \eqref{eq:cond-1-conv}-\eqref{eq:cond-2-conv}, \eqref{eq:mono-optim-feasib-2}, and \eqref{eq:smooth-stoch}, we have 
\begin{equation}\label{eq:mono-optim-feasib-3}
\Scale[.9]{
    \begin{aligned}
        &\inprod{H(x_F^*)}{\bar{x}_{K}-x^*_F}\leq \tfrac{1}{\tsum_{k=1}^{K-1}\tau_k}\Big[[\tfrac{\tau_1}{2\gamma_1\eta_1}\gnorm{x^*_F-x_1}{}{2}-\tfrac{\tau_{K-1}}{2\gamma_{K-1}\eta_{K-1}}\gnorm{x^*_F-x_K}{}{2}] + \tfrac{\tau_{K-1}}{\eta_{K-1}}\inprod{\Delta \Ofrak_{K}}{x_{K}-x^*_F}\\
        & + \tsum_{k=1}^{K-1} \tfrac{5\theta^2\gamma_k\tau_k}{\eta_k}(M_F^2+\gnorm{\delta_k^F -\delta_{k-1}^F}{}{2}+\eta_{k-1}^2(M_H^2+\gnorm{\delta_k^F -\delta_{k-1}^F}{}{2}))
        \\
        &- \tsum_{k=1}^{K-1}\tfrac{\tau_k}{\eta_k}\inprod{\delta_{k+1}^F + \eta_k \delta_{k+1}^H}{x_{k+1}-x^*_F}\Big].
    \end{aligned}}
\end{equation}
Note $- \tsum_{k=1}^{K-1}\tfrac{\tau_k}{\eta_k}\inprod{\delta_{k+1}^F + \eta_k \delta_{k+1}^H}{x_{k+1}-x^*_F}$ is written as below 
\begin{equation}\label{eq:expectation-1}
\Scale[.9]{
\begin{aligned}
    - \tsum_{k=1}^{K-1}\tfrac{\tau_k}{\eta_k}\inprod{\delta_{k+1}^F + \eta_k \delta_{k+1}^H}{x_{k+1}-x^*_F} =& \tsum_{k=1}^{K-1}\tfrac{\tau_k}{\eta_k}\inprod{\delta_{k+1}^F + \eta_k \delta_{k+1}^H}{x^*_F-x_{k+1}^a}\\
    &+ \tsum_{k=1}^{K-1}\tfrac{\tau_k}{\eta_k}\inprod{\delta_{k+1}^F + \eta_k \delta_{k+1}^H}{x_{k+1}^a-x_{k+1}},
\end{aligned}}
\end{equation}
where $x_{k+1}^a$ is a sequence defined in Lemma \ref{lemma:aug-seq}. 
Moreover, let us consider $\xi_{[k]} :=(\xi_1,\dots,\xi_k) $. Thus, from \eqref{eq:x-update} and \eqref{eq:x-aug-seq_def-mo}, one can see that $x_{k+1}$ and $x_{k+1}^a$  depend on $\xi_{[k]}$. Therefore, from \eqref{eq:stoch-oracle-F}, \eqref{eq:stoch-oracle-H}, and from tower property of expectations, we have the following relation
\begin{equation}\label{eq:tower-propert}
    \Ebb[\inprod{\delta_{k+1}^F + \eta_k \delta_{k+1}^H}{x_{k+1}^a-x_{k+1}} = \Ebb[\Ebb[\inprod{\delta_{k+1}^F + \eta_k \delta_{k+1}^H}{x_{k+1}^a-x_{k+1}}|\xi_{[k]} ]] = 0.
\end{equation}
Furthermore, from \eqref{eq:stoch_innerprod_delta-mo}, 
 we can have the following upper bound for the first summation in the right-hand side of \eqref{eq:expectation-1}. 
\begin{equation}\label{eq:expectation-2}
\Scale[.9]{
\begin{aligned}
    \Ebb[\tsum_{k=1}^{K-1}\tfrac{\tau_k}{\eta_k}\inprod{\delta_{k+1}^F + \eta_k \delta_{k+1}^H}{x^*_F-x_{k+1}^a}] &\leq \Ebb[\tfrac{\tau_1}{2\gamma_1\eta_1}\gnorm{x^*_F - x_1}{}{2} ]+ \Ebb[ \tsum_{k=1}^{K-1} \tfrac{\gamma_k\tau_k}{2\eta_k}\gnorm{\delta_{k+1}^F + \eta_k \delta_{k+1}^H}{}{2}]\\
    & \leq \tfrac{\tau_1}{\gamma_1\eta_1}D_X^2 + \tsum_{k=1}^{K-1} \tfrac{\gamma_k\tau_k(\sigma_F^2 + \eta_k^2\sigma_H^2)}{\eta_k},
\end{aligned}}
\end{equation}
where the last inequality comes from $\gnorm{a+b}{}{2}\leq 2\gnorm{a}{}{2} + 2\gnorm{b}{}{2}$, \eqref{eq:stoch-oracle-F} and \eqref{eq:stoch-oracle-H}. Therefore by taking the expectations from both sides of \eqref{eq:mono-optim-feasib-3} and in view of \eqref{eq:expectation-1}-\eqref{eq:expectation-2}, $\gnorm{a+b}{}{2}\leq 2\gnorm{a}{}{2} + 2\gnorm{b}{}{2}$, Young's inequality for $\inprod{\Delta \Ofrak_{K}}{x_{K}-x^*_F}$ , $\tau_k =1$ and the third condition of \eqref{eq:cond-1-conv} in \eqref{eq:mono-optim-feasib-3}, we obtain the following 
\begin{equation}\label{eq:mono-optim-feasib-3-1}
    \begin{aligned}
        \Ebb[\inprod{H(x_F^*)}{\bar{x}_{K}-x^*_F}]\leq & \tfrac{2}{K\gamma_1\eta_1}D_X^2 + \tfrac{5\gamma_{K-1}}{K\eta_{K-1}}(M_F^2+2\sigma_F^2+\eta_{K-1}^2(M_H^2+2\sigma_H^2))\\
        & + \tsum_{k=1}^{K-1} \tfrac{5\theta^2\gamma_k}{K\eta_k}(M_F^2+2\sigma_F^2+\eta_{k-1}^2(M_H^2+2\sigma_H^2))+\tsum_{k=1}^{K-1} \tfrac{\gamma_k(\sigma_F^2 + \eta_k^2\sigma_H^2)}{K\eta_k}.
    \end{aligned}
\end{equation}
Using the step-size policy in \eqref{eq:step-size-con-bivel}, one can derive the following bound for \eqref{eq:mono-optim-feasib-3-1}. 
\begin{equation}\label{eq:mono-optim-feasib-4}
\Scale[1]{
\begin{aligned}
        \Ebb[\inprod{H(x_F^*)}{\bar{x}_{K}-x^*_F}]\leq & D_X\bigg[16D_X(\tfrac{L_F}{K^{3/4}} +\tfrac{L_H}{K}) + \tfrac{2\sqrt{M_F^2+2\sigma_F^2+\eta^2(M_H^2+ 2\sigma_H^2)}}{K^{1/4}} \\ &+\tfrac{5(M_F^2+2\sigma_F^2+\eta^2(M_H^2+2\sigma_H^2)) + \sigma_F^2 + \eta^2\sigma_H^2}{8K^{-1/4}(L_F + \eta L_H) + K^{1/4}\sqrt{M_F^2+2\sigma_F^2+\eta^2(M_H^2+2\sigma_H^2)}}\\
         &+ \tfrac{5(M_F^2+2\sigma_F^2+\eta^2(M_H^2+2\sigma_H^2))}{8K^{3/4}(L_F + \eta L_H) + K^{5/4}\sqrt{M_F^2+2\sigma_F^2+\eta^2(M_H^2+\sigma_H^2)}}\bigg].
    \end{aligned}}
\end{equation}
Therefore, by taking the maximum on both sides of \eqref{eq:mono-optim-feasib-4} concerning the set $X^*_F$ and using the definition of the optimality gap function in \eqref{eq:optim-gap}, one obtains the following 
\begin{equation}\label{eq:mono-optim-feasib-5}
\Scale[1]{
\begin{aligned}
    \Ebb[\gap(\bar{x}_K, H, X^*_F)] \leq  & D_X\bigg[16D_X(\tfrac{L_F}{K^{3/4}} +\tfrac{L_H}{K}) + \tfrac{2\sqrt{M_F^2+2\sigma_F^2+\eta^2(M_H^2+ 2\sigma_H^2)}}{K^{1/4}} \\&+\tfrac{5(M_F^2+2\sigma_F^2+\eta^2(M_H^2+2\sigma_H^2)) + \sigma_F^2 + \eta^2\sigma_H^2}{8K^{-1/4}(L_F + \eta L_H) + K^{1/4}\sqrt{M_F^2+2\sigma_F^2+\eta^2(M_H^2+2\sigma_H^2)}}\\
         &+ \tfrac{5(M_F^2+2\sigma_F^2+\eta^2(M_H^2+2\sigma_H^2))}{8K^{3/4}(L_F + \eta L_H) + K^{5/4}\sqrt{M_F^2+2\sigma_F^2+\eta^2(M_H^2+\sigma_H^2)}}\bigg].
     \end{aligned}}
\end{equation}
Now, let us move to the feasibility gap function at the inner level. Considering Lemma \ref{lem:one-iteration-iropex} and from Cauchy-Schwarz inequality, we obtain the following 
\begin{equation}\label{eq:feasib-1}
 \Scale[.95]{
     \begin{aligned}
         \inprod{F(x)}{x_{k+1}-x}\leq &\tfrac{1}{2\gamma_k}[\gnorm{x-x_{k}}{}{2}-\gnorm{x-x_{k+1}}{}{2} -  \gnorm{x_{k+1}-x_{k}}{}{2}] + 2C_HD_X\eta_k\\
        &+ \inprod{\Delta \Ofrak_{k+1}}{x_{k+1}-x} - \theta_k\inprod{\Delta \Ofrak_k}{x_k-x}\\
         &+ \theta_k(L_F + \eta_{k-1} L_H)\gnorm{x_k-x_{k-1}}{}{}\gnorm{x_{k+1}-x_{k}}{}{} \\
         &+ \theta_k(M_F + \eta_{k-1}M_H)\gnorm{x_{k+1}-x_k}{}{}\\
         & -\inprod{\delta_{k+1}^F + \eta_k \delta_{k+1}^H}{x_{k+1}-x} -\theta_k\inprod{\delta_k^F -\delta_{k-1}^F+ \eta_{k-1}[\delta_k^H -\delta_{k-1}^H]}{x_{k+1}-x_k}.
     \end{aligned}}
     \end{equation}
 Using a similar approach to obtain an optimality gap and using the conditions in \eqref{eq:cond-1-conv}-\eqref{eq:cond-2-conv}, we obtain the following 
 \begin{equation}\label{eq:mono-optim-feasib-6}
    \begin{aligned}
        \Ebb[\inprod{F(x)}{\bar{x}_K-x}]\leq & \tfrac{2}{K\gamma_1}D_X^2 + \tfrac{5\gamma_{K-1}}{K}(M_F^2+2\sigma_F^2+\eta_{K-1}^2(M_H^2+\sigma_H^2))+\tfrac{2 C_H D_X\tsum_{k=1}^{K-1}\eta_k}{K}\\
        & + \tsum_{k=1}^{K-1} \tfrac{5\theta^2\gamma_k}{K}(M_F^2+2\sigma_F^2+\eta_k^2(M_H+2\sigma_H^2))+\tsum_{k=1}^{K-1} \tfrac{\gamma_k(\sigma_F^2 + \eta_k^2\sigma_H^2)}{K}.
    \end{aligned}
\end{equation}
From the step-size policy in \eqref{eq:step-size-con-bivel}, one can drive the following 
\begin{equation}\label{eq:mono-optim-feasib-7}
\Scale[1]{
    \begin{aligned}
        \Ebb[\inprod{F(x)}{\bar{x}_K-x}]\leq &   D_X\bigg[16D_X(\tfrac{L_F}{K} +\tfrac{L_H}{K^{5/4}}) + \tfrac{2\sqrt{M_F^2+2\sigma_F^2+\eta^2(M_H^2+2\sigma_H^2)}}{K^{1/2}} \\&+\tfrac{5(M_F^2+2\sigma_F^2+\eta^2(M_H^2+2\sigma_H^2))+ \sigma_F^2 + \eta^2\sigma_H^2}{8(L_F + \eta L_H) + K^{1/2}\sqrt{M_F^2+2\sigma_F^2+\eta^2(M_H^2+2\sigma_H^2)}}
         \\
         &+ \tfrac{5(M_F^2+2\sigma_F^2+\eta^2(M_H^2+\sigma_H^2))}{8K(L_F + \eta L_H) + K^{3/2}\sqrt{M_F^2+2\sigma_F^2+\eta^2(M_H^2+2\sigma_H^2)}}+\tfrac{2 C_H }{K^{1/4}}\bigg].
    \end{aligned}}
\end{equation}
Taking the maximum from both sides of \eqref{eq:mono-optim-feasib-7} concerning set $X$, we obtain the following feasibility bound  
\begin{equation}\label{eq:mono-optim-feasib-8}
\Scale[1]{
    \begin{aligned}
        \Ebb[\gap(\bar{x}_K,F,X)]\leq &  D_X\bigg[16D_X(\tfrac{L_F}{K} +\tfrac{L_H}{K^{5/4}}) + \tfrac{2\sqrt{M_F^2+2\sigma_F^2+\eta^2(M_H^2+2\sigma_H^2)}}{K^{1/2}} \\&+\tfrac{5(M_F^2+2\sigma_F^2+\eta^2(M_H^2+2\sigma_H^2))+ \sigma_F^2 + \eta^2\sigma_H^2}{8(L_F + \eta L_H) + K^{1/2}\sqrt{M_F^2+2\sigma_F^2+\eta^2(M_H^2+2\sigma_H^2)}}
         \\
         &+ \tfrac{5(M_F^2+2\sigma_F^2+\eta^2(M_H^2+\sigma_H^2))}{8K(L_F + \eta L_H) + K^{3/2}\sqrt{M_F^2+2\sigma_F^2+\eta^2(M_H^2+2\sigma_H^2)}}+\tfrac{2 C_H }{K^{1/4}}\bigg].
    \end{aligned}}
\end{equation}
For the lower bound in \eqref{eq:mono-optim}, note that $\bar{x}_K\in X$. From Definition \ref{def:optim-gap}, 
 and for any $\Tilde{x} \in X_F^*$, one can write 
\[
\inprod{H(\Tilde{x})}{\bar{x}_K - \Tilde{x}}\leq \max_{x\in X^*_F} \inprod{H(x)}{\bar{x}_K-x}  = \gap(\bar{x}_K, H, X^*_F)
\]
From Cauchy-Schwarz inequality and Assumption \eqref{eq:boundedness-H}, we have 
\[
-B_H \gnorm{\bar{x}_K - \Tilde{x}}{}{}\leq \gap(\bar{x}_K, H, X^*_F), 
\]
Letting $\Tilde{x} = \proj_{X^*_F}(\bar{x}_K)$ and taking the expectation from both sides, we obtain the results.
\end{proof}
\vspace{-3mm}
\paragraph{Analysis of Theorem \ref{thm:lower-bound-mono} }
\vspace{-3mm}
\begin{proof}
    Let us take $x^*_F \in X_F^*$ as an arbitrary vector. Then from Definition \ref{def:weak-sharp}, one can obtain
    \[
    \Ebb[\inprod{F(x_F^*)}{\bar{x}_K - x_F^*}]\geq \alpha\Ebb[\text{dist}(\bar{x}_K,X^*_F)].
    \] 
    Therefore, from the definition of the feasibility gap function in \eqref{eq:feasib-gap}, we have the following     \begin{equation}\label{eq:feasib-gap-lower}        \gap(\bar{x}_K,F,X)=\max_{x\in X}\Ebb[\inprod{F(x)}{\bar{x}_K-x}] \geq  \Ebb[\inprod{F(x^*_F)}{\bar{x}_K-x^*_F}]\geq \alpha \Ebb[\text{dist}(\bar{x}_K,X^*_F)]. 
    \end{equation}
    \vspace{-3mm}
    From \eqref{eq:mono-optim}, \eqref{eq:mono-feasib} and \eqref{eq:feasib-gap-lower}, we obtain \eqref{eq:lower-bound-mono}.  
\end{proof}
\paragraph{Analysis of Remark \ref{rem:alpha-H}}
The following Theorem formally states the rates concerning Remark \ref{rem:alpha-H}. 
\begin{theorem}\label{thm:weakly-sharp-conv}
         Suppose $\text{VI}(F,X)$ is $\alpha$-weakly sharp, and we have the following step-size policy 
         \begin{equation}\label{eq:step-size-con-bivel-weak-sharp}
         \Scale[.8]{
             \tau_k =\tau =1,\quad \theta_k =\theta=1,\quad \eta_k= \eta\leq  \tfrac{\alpha}{2\gnorm{H(x^*)}{}{}},\quad \gamma_k = \gamma =\tfrac{D_X}{8D_X(L_F + \eta L_H)+ \sqrt{K(M_F^2 +2\sigma_F^2+\eta^2 (M_H^2+2\sigma_H^2))}},}
         \end{equation}
         then, we have the following 
         \vspace{-2mm}
         \begin{equation*}\label{eq:weak-optim}
             \Scale[1]{
     \begin{aligned}
   \Ebb[ \gap(\bar{x}_K, H, X^*_F)] \leq &\tfrac{\gnorm{H(x^*)}{}{}D_X}{\alpha}\bigg[16D_X(\tfrac{L_F}{K} +\tfrac{L_H}{K^{5/4}}) + \tfrac{2\sqrt{M_F^2+2\sigma_F^2+\eta^2(M_H^2+2\sigma_H^2)}}{K^{1/2}} \\&+\tfrac{2[5(M_F^2+2\sigma_F^2+\eta^2(M_H^2+2\sigma_H^2))+ \sigma_F^2 + \eta^2\sigma^2_H]}{8D_X(L_F + \eta L_H) + K^{1/2}\sqrt{M_F^2+2\sigma_F^2+\eta^2(M_H^2+2\sigma_H^2)}}\\
       &  + \tfrac{10(M_F^2+2\sigma_F^2+\eta^2(M_H^2+2\sigma_H^2))}{8 KD_X(L_F + \eta L_H) + K^{3/2}\sqrt{M_F^2+2\sigma_F^2+\eta^2(M_H^2+2\sigma_H^2)}}\bigg].
         \end{aligned}}
         \end{equation*}
         and 
         \begin{equation*}\label{eq:weak-feasib}
   \Scale[1]{
   \begin{aligned}
   \Ebb[\text{dist}(\bar{x}_K,X^*_F)] \leq& \tfrac{D_X}{\alpha}\bigg[32D_X(\tfrac{L_F}{K} +\tfrac{L_H}{K^{5/4}}) + \tfrac{4\sqrt{M_F^2+2\sigma_F^2+\eta^2(M_H^2+2\sigma_H^2)}}{K^{1/2}} \\&+\tfrac{10(M_F^2+2\sigma_F^2+\eta^2(M_H^2+2\sigma_H^2))+ 2(\sigma_F^2 + \eta^2\sigma^2_H)}{8D_X(L_F + \eta L_H) + K^{1/2}\sqrt{M_F^2+2\sigma_F^2+\eta^2(M_H^2+2\sigma_H^2)}}\\
       &  + \tfrac{10(M_F^2+2\sigma_F^2+\eta^2(M_H^2+2\sigma_H^2)) }{8 KD_X(L_F + \eta L_H) + K^{3/2}\sqrt{M_F^2+2\sigma_F^2+\eta^2(M_H^2+2\sigma_H^2)}}\bigg].
\end{aligned}.}
         \end{equation*}
     \end{theorem}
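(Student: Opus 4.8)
The plan is to re-run the one-iteration recursion of Lemma~\ref{lem:one-iteration-iropex} together with the summation/telescoping bookkeeping already carried out in the proof of Lemma~\ref{lemma:optim-feasib}, but to exploit the $\alpha$-weak sharpness of $\text{VI}(F,X)$ to replace the crude estimate $\inprod{F(x^*_F)}{x_{k+1}-x^*_F}\ge 0$ by a \emph{strictly positive} quantity proportional to $\text{dist}(x_{k+1},X^*_F)$. The constraint $\eta\le\alpha/(2\gnorm{H(x^*)}{}{})$ in \eqref{eq:step-size-con-bivel-weak-sharp} is exactly what guarantees that the (possibly negative) contribution of the outer operator does not cancel this gain. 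Both stated estimates then follow from this single strengthened recursion (closed convexity of $X^*_F$, already used implicitly via $\proj_{X^*_F}$ in Lemma~\ref{lemma:optim-feasib}, is taken for granted).

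For the distance bound I would instantiate Lemma~\ref{lem:one-iteration-iropex} at $x=x^*$, the solution of $\text{VI}(H,X^*_F)$, with $\mu_H=0$. The crucial step is to lower bound the left-hand side $\inprod{F(x^*)+\eta H(x^*)}{x_{k+1}-x^*}$: by Definition~\ref{def:weak-sharp}, $\inprod{F(x^*)}{x_{k+1}-x^*}\ge\alpha\,\text{dist}(x_{k+1},X^*_F)$; and, writing $x_{k+1}-x^*=(x_{k+1}-\hat{x}_{k+1})+(\hat{x}_{k+1}-x^*)$ with $\hat{x}_{k+1}:=\proj_{X^*_F}(x_{k+1})$, using that $x^*$ solves the outer VI over $X^*_F$ (hence $\inprod{H(x^*)}{\hat{x}_{k+1}-x^*}\ge0$) together with Cauchy--Schwarz and \eqref{eq:boundedness-H} gives $\inprod{H(x^*)}{x_{k+1}-x^*}\ge-\gnorm{H(x^*)}{}{}\,\text{dist}(x_{k+1},X^*_F)$. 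Adding and invoking $\eta\le\alpha/(2\gnorm{H(x^*)}{}{})$ yields $\inprod{F(x^*)+\eta H(x^*)}{x_{k+1}-x^*}\ge\tfrac{\alpha}{2}\text{dist}(x_{k+1},X^*_F)$. From here the argument follows the proof of Lemma~\ref{lemma:optim-feasib} step by step: multiply by $\tau_k=1$, sum over $k=1,\dots,K-1$, telescope the $\Delta\Ofrak_{k+1}$ and $\theta_k\Delta\Ofrak_k$ cross terms, absorb the Lipschitz cross term and the nonsmooth term $(M_F+\eta M_H)\gnorm{x_{k+1}-x_k}{}{}$ against the available $-\gnorm{x_{k+1}-x_k}{}{2}$ slack via Young's inequality and the third relation of \eqref{eq:cond-1-conv}, take expectations, and control $-\sum_k\inprod{\delta_{k+1}^F+\eta\delta_{k+1}^H}{x_{k+1}-x^*}$ through the auxiliary-sequence device of Lemma~\ref{lemma:aug-seq} (note $x^*\in X^*_F$) and the tower property. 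The only structural change from \eqref{eq:mono-optim-feasib-6} is that the $2C_HD_X\eta_k$ term is absent, since $\eta H(x^*)$ is now kept on the left rather than discarded. Dividing by $\sum_k\tau_k=K-1$ and using convexity of $\text{dist}(\cdot,X^*_F)$ with Jensen's inequality bounds $\tfrac{\alpha}{2}\Ebb[\text{dist}(\bar x_K,X^*_F)]$ by that right-hand side; substituting $\gamma$ from \eqref{eq:step-size-con-bivel-weak-sharp} gives the stated estimate, the factor $2/\alpha$ producing the $1/\alpha$ prefactor once the $2$ is absorbed into the bracketed constants.

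For the optimality bound I would reuse the computation \eqref{eq:mono-optim-feasib-1}--\eqref{eq:mono-optim-feasib-5}, which relies only on \eqref{eq:cond-1-conv}--\eqref{eq:cond-2-conv} and hence applies verbatim to the constant policy \eqref{eq:step-size-con-bivel-weak-sharp}: taking $x=x^*_F$ arbitrary in $X^*_F$, discarding $\inprod{F(x^*_F)}{x_{k+1}-x^*_F}\ge0$ (equivalently, dropping the nonnegative $\text{dist}$ term after the weak-sharpness step), dividing by $\eta_k=\eta$, summing, taking expectations, and maximizing over $X^*_F$ bounds $\Ebb[\gap(\bar x_K,H,X^*_F)]$ by $\tfrac1\eta$ times the same right-hand side (without the $C_H$ term). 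With $\eta=\alpha/(2\gnorm{H(x^*)}{}{})$, the largest admissible value, $\tfrac1\eta=2\gnorm{H(x^*)}{}{}/\alpha$, and substituting $\gamma$ delivers the claimed $\tfrac{\gnorm{H(x^*)}{}{}D_X}{\alpha}[\cdots]$ form.

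The main obstacle is conceptual rather than computational: once the strengthened lower bound is available, everything reduces to the estimates already established. The point is to see that weak sharpness controls only the $F$-part of $O(\cdot,\eta)$, that the $H$-part must be handled by projecting onto $X^*_F$ and invoking outer-VI optimality of $x^*$, and that the two can be balanced only when $\eta$ is a fixed constant no larger than $\alpha/(2\gnorm{H(x^*)}{}{})$---which is also precisely what improves the rate from $\mathcal{O}(K^{-1/4})$ to $\mathcal{O}(K^{-1/2})$, since a constant $\eta$ removes the $\tfrac1{\gamma\eta}$ blow-up in the leading term that the vanishing choice $\eta=K^{-1/4}$ produced in \eqref{eq:step-size-con-bivel}. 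A minor point to watch is that under a constant $\eta$ the $C_H$-dependent term in the inner feasibility gap $\gap(\bar x_K,F,X)$ becomes $\mathcal{O}(\eta)=\mathcal{O}(1)$, which is why the theorem reports $\text{dist}(\bar x_K,X^*_F)$ rather than $\gap(\bar x_K,F,X)$ as the inner-level convergence criterion.
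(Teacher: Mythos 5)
Your proposal is correct and follows essentially the same route as the paper: lower-bound $\inprod{F(x^*)+\eta H(x^*)}{\cdot-x^*}$ via weak sharpness for the $F$-part and via $\proj_{X^*_F}$ plus outer-VI optimality of $x^*$ for the $H$-part, use $\eta\le\alpha/(2\gnorm{H(x^*)}{}{})$ to retain $\tfrac{\alpha}{2}\,\text{dist}(\cdot,X^*_F)$, and recycle the bound \eqref{eq:mono-optim-feasib-3-1} with the constant-$\eta$ policy for the optimality gap. The only cosmetic difference is that you apply weak sharpness per iterate and then invoke Jensen for $\text{dist}(\bar x_K,X^*_F)$, whereas the paper applies it directly to $\bar x_K$ (equivalent here since $F(x^*)+\eta H(x^*)$ is a fixed vector); you also correctly note that the bound requires taking $\eta$ at its largest admissible value, a point the paper leaves implicit.
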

\begin{proof}
%\vspace{-5mm}
Take an arbitrary solution $x^*\in X^*_H$ where $X^*_H$ denotes the solution set of problem \eqref{eq:bilevel-vi}. Using Lemma \ref{lem:one-iteration-iropex} and the similar relations we used in Lemma \ref{lemma:optim-feasib}, we have the following 
\begin{equation}\label{eq:weak-1}
\Scale[.9]{
    \begin{aligned}
       \Ebb[ \inprod{F(x^*)+ \eta H(x^*)}{\bar{x}_K-x^*}]\leq &\tfrac{2}{K\gamma_1}D_X^2 + \tfrac{5\gamma_{K-1}\theta_{K-1}^2}{K}(M_F^2+2\sigma_F^2+\eta_{K-1}^2(M_H^2+2\sigma_H^2))\\
        & + \tsum_{k=1}^{K-1} \tfrac{5\theta^2\gamma_k}{K}(M_F^2+2\sigma_F^2+\eta_{k-1}^2(M_H^2+\sigma_H^2))+\tsum_{k=1}^{K-1} \tfrac{\gamma_k(\sigma_F^2 + \eta_k^2\sigma_H^2)}{K}.
    \end{aligned}}
\end{equation}
Given that $x^*\in X^*_F$ and from Definition \ref{def:weak-sharp}, we have 
\begin{equation*}
    \Ebb[\inprod{F(x^*)}{\bar{x}_K-x^*}]\geq \alpha \Ebb[\text{dist}(\bar{x}_K,X *_F)].
\end{equation*}
Then 
\begin{equation}
\begin{aligned}
     \Ebb[\inprod{H(x^*)}{\bar{x}_K-x^*}] =&  \Ebb[\inprod{H(x^*)}{\bar{x}_K-\proj_{X^*_F}(\bar{x}_K)+\proj_{X^*_F}(\bar{x}_K)-x^*}]\\
     &\geq -\gnorm{H(x^*)}{}{}\Ebb[\gnorm{\bar{x}_K-\proj_{X^*_F}(\bar{x}_K)}{}{}].
\end{aligned}
\end{equation}
Note that given $\proj_{X^*_F}(\bar{x}_K) \in X^*_F$ and $x^*$ solves VI$(H,X^*_F)$, we know $\Ebb[\inprod{H(x^*)}{\proj_{X^*_F}(\bar{x}_K)-x^*}]\geq 0$. Therefore from \eqref{eq:weak-1}, we have 
\begin{equation}
\Scale[.9]{
\begin{aligned}
        \Ebb[\alpha& \text{dist}(\bar{x}_K,X^*_F) - \eta \gnorm{H(x^*)}{}{}\text{dist}(\bar{x}_K,X^*_F)] \leq  \tfrac{2D_X^2}{K\gamma_1} + \tfrac{5\gamma_{K-1}\theta_{K-1}^2}{K}(M_F^2+2\sigma_F^2+\eta_{K-1}^2(M_H^2+2\sigma_H^2))\\
         &+ \tsum_{k=1}^{K-1} \tfrac{5\theta^2\gamma_k}{K}(M_F^2+2\sigma_F^2+\eta_k^2(M_H^2+2\sigma_H^2))+\tsum_{k=1}^{K-1} \tfrac{\gamma_k(\sigma_F^2 + \eta_k^2\sigma_H^2)}{K}.
\end{aligned}}
\end{equation}
Taking step-size policy in \eqref{eq:step-size-con-bivel-weak-sharp}, we have 
\begin{equation*}
\Scale[1]{
\begin{aligned}
    \Ebb[\text{dist}(\bar{x}_K,X^*_F)] \leq& \tfrac{D_X}{\alpha}\bigg[32D_X(\tfrac{L_F}{K} +\tfrac{L_H}{K^{5/4}}) + \tfrac{4\sqrt{M_F^2+2\sigma_F^2+\eta^2(M_H^2+2\sigma_H^2)}}{K^{1/2}} \\&+\tfrac{10(M_F^2+2\sigma_F^2+\eta^2(M_H^2+2\sigma_H^2))+ 2(\sigma_F^2 + \eta^2\sigma^2_H)}{8D_X(L_F + \eta L_H) + K^{1/2}\sqrt{M_F^2+2\sigma_F^2+\eta^2(M_H^2+2\sigma_H^2)}}\\
       &  + \tfrac{10(M_F^2+2\sigma_F^2+\eta^2(M_H^2+2\sigma_H^2)) }{8 KD_X(L_F + \eta L_H) + K^{3/2}\sqrt{M_F^2+2\sigma_F^2+\eta^2(M_H^2+2\sigma_H^2)}}\bigg]
\end{aligned}
    }
\end{equation*}
Moreover, from \eqref{eq:mono-optim-feasib-3-1} and applying the step-size policy in \eqref{eq:step-size-con-bivel-weak-sharp}, one can get the following 
\begin{equation}\label{eq:mono-optim-feasib-weak-2}
\Scale[1]{
    \begin{aligned}
        \Ebb[\inprod{H(x_F^*)}{\bar{x}_{K}-x^*_F}]\leq & \tfrac{\gnorm{H(x^*)}{}{}D_X}{\alpha}\bigg[16D_X(\tfrac{L_F}{K} +\tfrac{L_H}{K^{5/4}}) + \tfrac{2\sqrt{M_F^2+2\sigma_F^2+\eta^2(M_H^2+2\sigma_H^2)}}{K^{1/2}} \\&+\tfrac{2[5(M_F^2+2\sigma_F^2+\eta^2(M_H^2+2\sigma_H^2))+ \sigma_F^2 + \eta^2\sigma^2_H]}{8D_X(L_F + \eta L_H) + K^{1/2}\sqrt{M_F^2+2\sigma_F^2+\eta^2(M_H^2+2\sigma_H^2)}}\\
       &  + \tfrac{10(M_F^2+2\sigma_F^2+\eta^2(M_H^2+2\sigma_H^2))}{8 KD_X(L_F + \eta L_H) + K^{3/2}\sqrt{M_F^2+2\sigma_F^2+\eta^2(M_H^2+2\sigma_H^2)}}\bigg].
    \end{aligned}}
\end{equation}
Taking the maximum from both sides, we obtain 
\begin{equation*}
\Scale[1]{
\begin{aligned}
    \Ebb[ \gap(\bar{x}_K, H, X^*_F)] \leq &\tfrac{\gnorm{H(x^*)}{}{}D_X}{\alpha}\bigg[16D_X(\tfrac{L_F}{K} +\tfrac{L_H}{K^{5/4}}) + \tfrac{2\sqrt{M_F^2+2\sigma_F^2+\eta^2(M_H^2+2\sigma_H^2)}}{K^{1/2}} \\&+\tfrac{2[5(M_F^2+2\sigma_F^2+\eta^2(M_H^2+2\sigma_H^2))+ \sigma_F^2 + \eta^2\sigma^2_H]}{8D_X(L_F + \eta L_H) + K^{1/2}\sqrt{M_F^2+2\sigma_F^2+\eta^2(M_H^2+2\sigma_H^2)}}\\
       &  + \tfrac{10(M_F^2+2\sigma_F^2+\eta^2(M_H^2+2\sigma_H^2))}{8 KD_X(L_F + \eta L_H) + K^{3/2}\sqrt{M_F^2+2\sigma_F^2+\eta^2(M_H^2+2\sigma_H^2)}}\bigg].
         \end{aligned}}
\end{equation*}
\end{proof}
\subsubsection{Convergence results for strongly monotone problem}\label{sec:strong-mono}
\begin{theorem}\label{thm:convex-case-optim-feasib-strong}
         Let us assume problem \eqref{eq:bilevel-vi} is strongly monotone $(\mu_H > 0)$. Suppose we have the following step-size policy 
         \vspace{-3mm}
         \begin{equation}\label{eq:step-size-con-bivel-strong}
         \Scale[.95]{
             \tau_k =k+1,\quad \theta_k =\tfrac{k}{k+1},\quad \eta_k= \eta= K^{-\tfrac{1}{4}},\quad \gamma_k = \gamma =\tfrac{D_X}{8D_X(L_F + \eta L_H)+ \sqrt{K(M_F^2 +2\sigma_F^2+\eta^2 (M_H^2+2\sigma_H^2))}},}
         \end{equation}
         \vspace{-2mm}
         Then, for $K\geq \tfrac{1}{2\gamma_k \eta_{k-1}\mu_H}$, Algorithm \ref{alg:IRopex} gives the following optimality and feasibility gaps 
         \begin{equation}\label{eq:mono-optim-strong}
\Scale[.9]{
\begin{aligned}
   -B_H\Ebb[\text{dist}(\bar{x}_K,X^*_F)] \leq  \Ebb[\gap(\bar{x}_K, H, X^*_F)] \leq  &D_X\bigg[\big(16D_X(\tfrac{L_F}{K^{7/4}} +\tfrac{L_H}{K^2}) + \tfrac{2\sqrt{M_F^2+2\sigma_F^2+\eta^2(M_H^2+ 2\sigma_H^2)}}{K^{5/4}} \big)\\
   &+\tfrac{5(M_F^2+2\sigma_F^2+\eta^2(M_H^2+2\sigma_H^2)) + \sigma_F^2 + \eta^2\sigma_H^2}{8K^{3/4}D_X(L_F + \eta L_H) + K^{5/4}\sqrt{M_F^2+2\sigma_F^2+\eta^2(M_H^2+2\sigma_H^2)}}\\
         &+ \tfrac{5(M_F^2+2\sigma_F^2+\eta^2(M_H^2+2\sigma_H^2))}{8K^{7/4}D_X(L_F + \eta L_H) + K^{9/4}\sqrt{M_F^2+2\sigma_F^2+\eta^2(M_H^2+\sigma_H^2)}}\bigg],
     \end{aligned}}
\end{equation}
\vspace{-2mm}
and 
\begin{equation}\label{eq:mono-feasib-strong}
\Scale[.9]{
    \begin{aligned}
        &\Ebb[\gap(\bar{x}_K,F,X)]\leq   D_X\bigg[\big(16D_X(\tfrac{L_F}{K^2} +\tfrac{L_H}{K^{9/4}}) + \tfrac{2\sqrt{M_F^2+2\sigma_F^2+\eta^2(M_H^2+2\sigma_H^2)}}{K^{3/2}} \big)\\
        &+\tfrac{5(M_F^2+2\sigma_F^2+\eta^2(M_H^2+2\sigma_H^2)) + \sigma_F^2 + \eta^2\sigma_H^2}{8K(L_F + \eta L_H) + K^{3/2}\sqrt{M_F^2+2\sigma_F^2+\eta^2(M_H^2+2\sigma_H^2)}}
         + \tfrac{5(M_F^2+2\sigma_F^2+\eta^2(M_H^2+\sigma_H^2))}{8K^2(L_F + \eta L_H) + K^{5/2}\sqrt{M_F^2+2\sigma_F^2+\eta^2(M_H^2+2\sigma_H^2)}}+\tfrac{2 C_H}{K^{5/4}}\bigg].
    \end{aligned}}
\end{equation}
Further, the explicit lower bound for the strongly monotone case has the following form
\begin{equation}\label{eq:lower-bound-mono-strong}
           \Scale[1]{
           \begin{aligned}
                \Ebb[\gap(\bar{x}_K, H, X^*_F)]& \geq  -\tfrac{B_H}{\alpha}\Bigg[D_X\big[\big(16D_X(\tfrac{L_F}{K^2} +\tfrac{L_H}{K^{9/4}}) + \tfrac{2\sqrt{M_F^2+2\sigma_F^2+\eta^2(M_H^2+2\sigma_H^2)}}{K^{3/2}} \big)\\&+\tfrac{5(M_F^2+2\sigma_F^2+\eta^2(M_H^2+2\sigma_H^2)) + \sigma_F^2 + \eta^2\sigma_H^2}{8KD_X(L_F + \eta L_H) + K^{3/2}\sqrt{M_F^2+2\sigma_F^2+\eta^2(M_H^2+2\sigma_H^2)}}
         \\
         &+ \tfrac{5(M_F^2+2\sigma_F^2+\eta^2(M_H^2+\sigma_H^2))}{8K^2D_X(L_F + \eta L_H) + K^{5/2}\sqrt{M_F^2+2\sigma_F^2+\eta^2(M_H^2+2\sigma_H^2)}}\big]+\tfrac{2 C_H}{K^{5/4}}\Bigg].
           \end{aligned}}
           \end{equation}
Moreover, consider the following step-size policy 
\begin{equation}\label{eq:step-size-con-bivel-weak-sharp-str}
         \Scale[.8]{
             \tau_k =k+1,\quad \theta_k =\tfrac{k}{k+1},\quad \eta_k= \eta= \tfrac{\alpha}{2\gnorm{H(x^*)}{}{}},\quad \gamma_k = \gamma =\tfrac{D_X}{8D_X(L_F + \eta L_H)+ \sqrt{K(M_F^2 +2\sigma_F^2+\eta^2 (M_H^2+2\sigma_H^2))}},}
         \end{equation}
         Then, similar to results in Theorem \ref{thm:weakly-sharp-conv} and under the weak sharpness assumption for $F$, we obtain the following bounds for $K\geq \tfrac{1}{2\gamma_k \eta_{k-1}\mu_H}$
        
         \begin{equation}\label{eq:weak-optim-str}
             \Scale[1]{
     \begin{aligned}
    \Ebb[ \gap(\bar{x}_K, H, X^*_F)] \leq &\tfrac{\gnorm{H(x^*)}{}{}D_X^2}{\alpha}\bigg[32(\tfrac{L_F}{K^2} +\tfrac{L_H}{K^{9/4}}) + \tfrac{4\sqrt{M_F^2+2\sigma_F^2+\eta^2(M_H^2+2\sigma_H^2)}}{K^{3/2}} \\&+\tfrac{2[5(M_F^2+2\sigma_F^2+\eta^2(M_H^2+2\sigma_H^2)) + \sigma_F^2 + \eta^2\sigma_H^2]}{8K(L_F + \eta L_H) + K^{3/2}\sqrt{M_F^2+2\sigma_F^2+\eta^2(M_H^2+2\sigma_H^2)}}\\
       &  + \tfrac{10(M_F^2+2\sigma_F^2+\eta^2(M_H^2+2\sigma_H^2))}{8 K^2(L_F + \eta L_H) +  K^{5/2}\sqrt{M_F^2+2\sigma_F^2+\eta^2(M_H^2+2\sigma_H^2)}}\bigg],
         \end{aligned}}
         \end{equation}
          \begin{equation}\label{eq:weak-feasib-str}
   \Scale[1]{
   \begin{aligned}
    \Ebb[\text{dist}(\bar{x}_K,X^*_F)] \leq& \tfrac{D_X}{\alpha}\bigg[32D_X(\tfrac{L_F}{K^2} +\tfrac{L_H}{K^{9/4}}) + \tfrac{4\sqrt{M_F^2+2\sigma_F^2+\eta^2(M_H^2+2\sigma_H^2)}}{K^{3/2}} \\&+\tfrac{10(M_F^2+2\sigma_F^2+\eta^2(M_H^2+2\sigma_H^2)) + 2(\sigma_F^2 + \eta^2\sigma_H^2)}{8KD_X(L_F + \eta L_H) + K^{3/2}\sqrt{M_F^2+2\sigma_F^2+\eta^2(M_H^2+2\sigma_H^2)}}\\
       &  + \tfrac{10(M_F^2+2\sigma_F^2+\eta^2(M_H^2+2\sigma_H^2))}{8 K^2D_X(L_F + \eta L_H) + K^{5/2}\sqrt{M_F^2+2\sigma_F^2+\eta^2(M_H^2+2\sigma_H^2)}}\bigg],
\end{aligned}}
         \end{equation}
     \end{theorem}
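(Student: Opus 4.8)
The plan is to run the monotone argument of Lemmas~\ref{lemma:aug-seq} and~\ref{lemma:optim-feasib} and of Theorems~\ref{thm:lower-bound-mono} and~\ref{thm:weakly-sharp-conv} essentially line for line, but with the increasing weights $\tau_k=k+1$ and extrapolation $\theta_k=\tfrac{k}{k+1}$, now exploiting the curvature term $-\eta_k\mu_H\gnorm{x-x_{k+1}}{}{2}$ that Lemma~\ref{lem:one-iteration-iropex} supplies when $\mu_H>0$. First I would take $x=x^*_F\in X^*_F$ in~\eqref{eq:one-iteration-iropex}, drop the nonnegative term $\inprod{F(x^*_F)}{x_{k+1}-x^*_F}$, divide by $\eta_k=\eta$, multiply by $\tau_k$, and sum over $k=1,\dots,K-1$. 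With this choice one still has $\tau_k\theta_k=\tau_{k-1}$, $\tfrac{\tau_k\theta_k}{\eta_k}=\tfrac{\tau_{k-1}}{\eta_{k-1}}$, $\tau_k\theta_k^2\le\tau_{k-1}$, and $\theta_k(L_F^2+\eta^2L_H^2)\le\tfrac1{50\gamma^2}$ from the choice of $\gamma$, so the telescoping of the $\inprod{\Delta\Ofrak_k}{\cdot}$ terms and the matching of the cross terms against the $\tfrac{\tau_k}{10\gamma\eta}\gnorm{x_{k+1}-x_k}{}{2}$ slack go through exactly as in~\eqref{eq:mono-optim-feasib-2}--\eqref{eq:smooth-stoch}. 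The conditions that now \emph{fail} are the monotonicities $\tfrac{\tau_k}{\gamma_k\eta_k}\le\tfrac{\tau_{k-1}}{\gamma_{k-1}\eta_{k-1}}$ and $\tfrac{\tau_k}{\gamma_k}\le\tfrac{\tau_{k-1}}{\gamma_{k-1}}$; this is where $\mu_H$ enters, since the quadratic part of the sum becomes $\sum_k\big[\tfrac{\tau_k}{2\gamma\eta}\gnorm{x^*_F-x_k}{}{2}-(\tfrac{\tau_k}{2\gamma\eta}+\tau_k\mu_H)\gnorm{x^*_F-x_{k+1}}{}{2}\big]$, whose reindexed middle coefficients equal $\tfrac1{2\gamma\eta}-k\mu_H$ and are nonpositive once $k\ge\tfrac1{2\gamma\eta\mu_H}$ --- which the hypothesis $K\ge\tfrac1{2\gamma_k\eta_{k-1}\mu_H}$ is there to guarantee --- leaving, up to the finitely many early indices, only $\tfrac{\tau_1}{2\gamma\eta}\gnorm{x^*_F-x_1}{}{2}\le\tfrac{2D_X^2}{\gamma\eta}$.

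Second I would treat the stochastic terms. The tower-property device of~\eqref{eq:expectation-1}--\eqref{eq:tower-propert} still annihilates the inner-product of the noise against $x_{k+1}^a-x_{k+1}$ in expectation, but Lemma~\ref{lemma:aug-seq} cannot be quoted verbatim because its hypothesis $\tfrac{\tau_k}{\gamma_k\eta_k}\le\tfrac{\tau_{k-1}}{\gamma_{k-1}\eta_{k-1}}$ is now violated; I would instead define the auxiliary sequence $\{x_k^a\}\subseteq X^*_F$ by a prox step on $-\gamma(\delta_k^F+\eta\delta_k^H)$ with strong-convexity modulus $1+\gamma\eta\mu_H$, apply Proposition~\ref{prop:tech_res1-mo} with that modulus, and close the resulting telescoping precisely because of the $\mu_H$ surplus, obtaining $\Ebb\sum_k\tfrac{\tau_k}{\eta}\inprod{\delta_{k+1}^F+\eta\delta_{k+1}^H}{x^*_F-x_{k+1}^a}\le\tfrac{2D_X^2}{\gamma\eta}+\sum_k\tfrac{\gamma\tau_k(\sigma_F^2+\eta^2\sigma_H^2)}{\eta}$. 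Combining this with the Young-inequality bounds~\eqref{eq:non-smooth-youngs}--\eqref{eq:stoch-F-youngs} yields the analogue of~\eqref{eq:mono-optim-feasib-3-1},
\[
\Ebb\inprod{H(x^*_F)}{\bar{x}_K-x^*_F}\ \le\ \frac{1}{\sum_{k=1}^{K-1}\tau_k}\Big[\tfrac{2D_X^2}{\gamma\eta}+\sum_{k=1}^{K-1}\tfrac{5\gamma\tau_k}{\eta}\big(M_F^2+2\sigma_F^2+\eta^2(M_H^2+2\sigma_H^2)\big)+\sum_{k=1}^{K-1}\tfrac{\gamma\tau_k(\sigma_F^2+\eta^2\sigma_H^2)}{\eta}\Big],
\]
and since $\sum_{k=1}^{K-1}\tau_k=\Theta(K^2)$, dividing by it contributes exactly one extra factor $1/K$ relative to~\eqref{eq:mono-optim-feasib-3-1}; substituting $\gamma,\eta$ from~\eqref{eq:step-size-con-bivel-strong} and maximizing over $x^*_F\in X^*_F$ produces~\eqref{eq:mono-optim-strong}, whose lower bound $-B_H\Ebb[\text{dist}(\bar{x}_K,X^*_F)]$ is immediate from $\bar{x}_K\in X$, Cauchy--Schwarz and $\gnorm{H}{}{}\le B_H$ as in Theorem~\ref{thm:lower-bound-mono}.

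Third, the feasibility display~\eqref{eq:mono-feasib-strong} follows from the same computation with a generic $x\in X$: keep the term $-\eta_k\inprod{H(x)}{x_{k+1}-x}$ and bound it by $2C_HD_X\eta_k$ exactly as in~\eqref{eq:feasib-1}, do not divide by $\eta$, and carry the $\tau_k$-weighted average through (now the failing condition $\tfrac{\tau_k}{\gamma_k}\le\tfrac{\tau_{k-1}}{\gamma_{k-1}}$ is repaired by the $\tau_k\eta_k\mu_H$ slack under the same threshold on $K$). The lower bound~\eqref{eq:lower-bound-mono-strong} is then obtained from~\eqref{eq:mono-feasib-strong} and $\alpha$-weak sharpness precisely as in Theorem~\ref{thm:lower-bound-mono}. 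For the last two displays I would repeat the argument of Theorem~\ref{thm:weakly-sharp-conv}: take $x^*\in X^*_H$, use weak sharpness to get $\Ebb\inprod{F(x^*)}{\bar{x}_K-x^*}\ge\alpha\,\Ebb[\text{dist}(\bar{x}_K,X^*_F)]$ together with $\Ebb\inprod{H(x^*)}{\bar{x}_K-x^*}\ge-\gnorm{H(x^*)}{}{}\,\Ebb[\text{dist}(\bar{x}_K,X^*_F)]$, so that with $\eta=\tfrac{\alpha}{2\gnorm{H(x^*)}{}{}}$ the coefficient of $\text{dist}(\bar{x}_K,X^*_F)$ in the summed $\inprod{F(x^*)+\eta H(x^*)}{\bar{x}_K-x^*}$ inequality is at least $\tfrac{\alpha}{2}$; this gives~\eqref{eq:weak-feasib-str}, and feeding the distance bound back into the optimality estimate (with $\gnorm{H(x^*)}{}{}\,\text{dist}$ now on the right) gives~\eqref{eq:weak-optim-str}.

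The main obstacle I expect is exactly the quadratic telescoping under the increasing weights: one must verify that the strong-convexity surplus $\tau_k\mu_H$ genuinely dominates the growth $\tfrac{\tau_k-\tau_{k-1}}{2\gamma_k\eta_k}$ for every index past the stated threshold, rebuild the auxiliary-sequence lemma with the matching strong-convexity modulus so the noise terms still telescope, and confirm that the early indices $k<\tfrac1{2\gamma\eta\mu_H}$ --- which are unavoidable because $\gamma\eta\to0$ --- contribute only lower-order error after normalization by $\sum_{k=1}^{K-1}\tau_k$, so that pinning the condition on $K$ tightly is the part needing care; once the weights and this bookkeeping are installed, the cross-term, Young-inequality, and weak-sharpness steps are routine transcriptions of the monotone proofs.
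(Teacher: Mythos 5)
Your proposal follows essentially the same route as the paper: the authors likewise prove this theorem by replacing the monotone step-size conditions \eqref{eq:cond-1-conv}--\eqref{eq:cond-2-conv} with the $\mu_H$-augmented conditions \eqref{eq:cond-1-conv-str}--\eqref{eq:cond-2-conv-str} (i.e., $\tfrac{\tau_k}{2\gamma_k\eta_k}\leq \tfrac{\tau_{k-1}}{\eta_{k-1}}(\tfrac{1}{2\gamma_{k-1}}+\eta_{k-1}\mu_H)$ and its $\eta$-free analogue), which is exactly your observation that the curvature term $-\tau_k\mu_H\gnorm{x-x_{k+1}}{}{2}$ absorbs the growth of $\tau_k=k+1$ past the threshold $k\gtrsim\tfrac{1}{2\gamma\eta\mu_H}$, after which they invoke the Lemma~\ref{lemma:optim-feasib}, Theorem~\ref{thm:lower-bound-mono}, and Theorem~\ref{thm:weakly-sharp-conv} machinery verbatim. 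If anything you are more explicit than the paper on two points it leaves silent --- rebuilding the auxiliary-sequence telescoping of Lemma~\ref{lemma:aug-seq} under the now-increasing weights $\tfrac{\tau_k}{\gamma_k\eta_k}$, and the bookkeeping for the early indices $k<\tfrac{1}{2\gamma\eta\mu_H}$ where the augmented condition cannot hold --- both of which are genuine loose ends in the paper's own argument rather than defects of your proposal.
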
 
\paragraph{Convergence analysis for the strongly monotone problem}

     \begin{proof}
         Since the proof of Theorem \ref{thm:convex-case-optim-feasib-strong} is very similar to the proof we had for Theorems \ref{thm:convex-case-optim-feasib}-\ref{thm:weakly-sharp-conv}, we only mention the differences. To prove \eqref{eq:mono-optim-strong} and \eqref{eq:mono-feasib-strong}, we have to satisfy the changes we have in conditions \eqref{eq:cond-1-conv} and \eqref{eq:cond-2-conv}. In particular, the new conditions to satisfy are the following 
          \begin{subequations}\label{}
        \begin{equation}\label{eq:cond-1-conv-str}
            \tfrac{\tau_k}{2\gamma_k\eta_k}\leq \tfrac{\tau_{k-1}}{\eta_{k-1}}(\tfrac{1}{2\gamma_{k-1}} + \eta_{k-1}\mu_H),\quad \tfrac{\tau_k\theta_k}{\eta_k} =  \tfrac{\tau_{k-1}}{\eta_{k-1}},\quad  \theta_k (L_F^2 + \eta_k^2L_H^2)\leq \tfrac{1}{50\gamma_k\gamma_{k-1}}
        \end{equation}
        \begin{equation}\label{eq:cond-2-conv-str}
            \tfrac{\tau_k}{2\gamma_k}\leq \tau_{k-1}(\tfrac{1}{2\gamma_k}\eta_{k-1}\mu_H),\quad \tau_k\theta_k =  \tau_{k-1},
        \end{equation}
    \end{subequations}
    Considering the new conditions in \eqref{eq:cond-1-conv-str} and \eqref{eq:cond-2-conv-str}, and following the same procedure used in Lemma \ref{lemma:optim-feasib} and the analysis of Theorem \ref{thm:convex-case-optim-feasib}—specifically with respect to the step-size policy in \eqref{eq:step-size-con-bivel-strong}—we derive \eqref{eq:mono-optim-strong} and \eqref{eq:mono-feasib-strong}. The lower bound in \eqref{eq:lower-bound-mono-strong} follows from the same argument presented in Theorem \ref{thm:lower-bound-mono}. Finally, the upper bounds in \eqref{eq:weak-feasib-str} and \eqref{eq:weak-optim-str} are established using the approach taken in Theorem \ref{thm:weakly-sharp-conv}.
     \end{proof}
     \subsubsection{Convergence analysis of variable step-size policy}\label{sec:adaptive}
    In this section, we analyze the convergence rates of the update policy introduced in Section \ref{sec:adaptive} where $\eta_k$ does not depend on $K$. 
    \begin{theorem}\label{thm:adaptive}
        Consider the following conditions 
        \begin{equation}\label{eq:adap-con}
            \tfrac{\tau_k \theta_k}{\eta_k} = \tfrac{\tau_{k-1}}{\eta_{k-1}},\quad \theta_k(L_F^2 + \eta_k^2L_H^2)\leq \tfrac{1}{50\gamma_k\gamma_{k-1}}.
        \end{equation}
        Then the following step-size policy satisfies \eqref{eq:adap-con} 
        \begin{equation}\label{eq:adap-step-size}
        \Scale[.9]{
            \tau_k = \tau = 1, \quad \eta_k = (k+1)^{-1/4}, \quad \theta_k = (\tfrac{k}{k+1})^{1/4}, \quad \gamma_k = \tfrac{D_X}{8D_X (L_F + \eta_k L_H) + \sqrt{k(M_F^2 + 2\sigma_F^2 + \eta_k^2(M_H^2+2\sigma_H^2))}}},
        \end{equation}
        and gives the following upper bounds for optimality and feasibility gaps 
        \begin{equation}\label{eq:adapt-optim-mono}
\Scale[0.85]{
    \begin{aligned}
        -B_H&\Ebb[\text{dist}(\bar{x}_K,X_F^*)]\leq \Ebb[\gap(\bar{x}_K, H, X_F^*)]\leq\tfrac{D_X^2 +D_{X_F^*}^2}{D_X}\bigg[\tfrac{16D_X(L_F + \eta_{K-1}L_H)}{K^{3/4}} + \tfrac{2\sqrt{M_F^2 + 2\sigma_F^2 + \eta^2_{K-1}(M_H^2 + 2\sigma_H^2)}}{K^{1/4}}\bigg]\\
        & + \tfrac{5D_X(M_F^2 + 2\sigma_F^2 + \eta^2_{K-1}(M_H^2 + 2\sigma_H^2))}{8K^{3/4}D_X (L_F+\eta_{K-1}L_H)+K^{5/4}\sqrt{M_F^2 + 2\sigma_F^2+\eta_{K-1}(M_H^2+ 2\sigma_H^2)}} + \tfrac{D_X(20 (M_F^2 + 2\sigma_F^2) + \sigma_F^2)}{3K^{1/4}\sqrt{M_F^2+2\sigma_F^2}} +\tfrac{D_X (10(M_H^2 + 2\sigma_H^2) + \sigma_H^2)}{K^{1/2}\sqrt{M_H^2 + 2\sigma_H^2}} 
    \end{aligned}}
\end{equation}
        \begin{equation}\label{eq:adapt-feasib}
    \Scale[.9]{
\begin{aligned}
    &\Ebb[\gap(\bar{x}_K, F, X)]\leq  D_X\bigg[\tfrac{32D_X(L_F + \eta_{K-1}L_H)}{K} + \tfrac{2\sqrt{M_F^2 + 2\sigma_F^2 + \eta^2_{K-1}(M_H^2 + 2\sigma_H^2)}}{K^{1/2}}\bigg]\\
    &
    + 2\tfrac{D_X}{K}[2D_X(2L_F + (\eta_{K-1} + \eta_1)L_H)
     + 2M_F + 4\sigma_F + (\eta_{K-1}+\eta_1)(M_H+2\sigma_H)]\\
     &+ 4\tfrac{D_X(K+2)^{3/4}}{K}[2D_X(L_F +  \eta_1 L_H)
     + M_F + 2\sigma_F + \eta_1(M_H+2\sigma_H)]\\
         &+ \tfrac{D_X(20 (M_F^2 + 2\sigma_F^2) + \sigma_F^2)}{3K^{1/2}\sqrt{M_F^2+2\sigma_F^2}} +\tfrac{D_X (10(M_H^2 + 2\sigma_H^2) + \sigma_H^2)}{K^{3/4}\sqrt{M_H^2 + 2\sigma_H^2}}
        + \tfrac{2C_HD_X}{K^{1/4}}.\\
\end{aligned}
    }
\end{equation}
        Additionally, similar to theorem \ref{thm:lower-bound-mono}, we obtain the following lower bound for optimality gap
        \begin{equation}\label{eq:adapt-lower}
        \begin{aligned}
          & \Ebb[\gap(\bar{x}_K, H, X^*_F)]\geq -\tfrac{B_H}{\alpha}\bigg[D_X\big[\tfrac{32D_X(L_F + \eta_{K-1}L_H)}{K} + \tfrac{2\sqrt{M_F^2 + 2\sigma_F^2 + \eta^2_{K-1}(M_H^2 + 2\sigma_H^2)}}{K^{1/2}}\big]\\
    &
    + 2\tfrac{D_X}{K}[2D_X(2L_F + (\eta_{K-1} + \eta_1)L_H)
     + 2M_F + 4\sigma_F + (\eta_{K-1}+\eta_1)(M_H+2\sigma_H)]\\
     &+ 4\tfrac{D_X(K+2)^{3/4}}{K}[2D_X(L_F +  \eta_1 L_H)
     + M_F + 2\sigma_F + \eta_1(M_H+2\sigma_H)]\\
         &+ \tfrac{D_X(20 (M_F^2 + 2\sigma_F^2) + \sigma_F^2)}{3K^{1/2}\sqrt{M_F^2+2\sigma_F^2}} +\tfrac{D_X (10(M_H^2 + 2\sigma_H^2) + \sigma_H^2)}{K^{3/4}\sqrt{M_H^2 + 2\sigma_H^2}}
        + \tfrac{2C_HD_X}{K^{1/4}}\bigg].
        \end{aligned}
        \end{equation}
    \end{theorem}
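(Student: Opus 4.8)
I plan to follow the proof of Theorem~\ref{thm:convex-case-optim-feasib} (i.e.\ Lemma~\ref{lemma:optim-feasib}) essentially line by line, and to isolate the two places where the $k$-dependence of $\eta_k$ breaks the clean telescoping that the fixed-$K$ policy \eqref{eq:step-size-con-bivel} enjoyed. First I would verify that \eqref{eq:adap-step-size} meets \eqref{eq:adap-con}: the first identity is direct, $\tfrac{\tau_k\theta_k}{\eta_k}=\big(\tfrac{k}{k+1}\big)^{1/4}(k+1)^{1/4}=k^{1/4}=\tfrac{\tau_{k-1}}{\eta_{k-1}}$; and since $\theta_k\le 1$, $\eta_{k-1}\ge\eta_k$ and $\gamma_k^{-1}=8(L_F+\eta_kL_H)+\sqrt{k(M_F^2+2\sigma_F^2+\eta_k^2(M_H^2+2\sigma_H^2))}/D_X\ge 8(L_F+\eta_kL_H)$, one gets $\tfrac{1}{50\gamma_k\gamma_{k-1}}\ge\tfrac{64}{50}(L_F+\eta_kL_H)^2\ge L_F^2+\eta_k^2L_H^2\ge\theta_k(L_F^2+\eta_k^2L_H^2)$, which is the second condition.

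Next I would start from Lemma~\ref{lem:one-iteration-iropex} with $\mu_H=0$, take $x=x_F^*\in X_F^*$, drop the nonnegative term $\inprod{F(x_F^*)}{x_{k+1}-x_F^*}$, divide \eqref{eq:one-iteration-iropex} by $\eta_k$, multiply by $\tau_k=1$, and sum over $k=1,\dots,K-1$, exactly as in \eqref{eq:mono-optim-feasib-1}--\eqref{eq:mono-optim-feasib-2}. The Lipschitz cross term $\tfrac{\theta_k}{\eta_k}(L_F+\eta_{k-1}L_H)\|x_k-x_{k-1}\|\,\|x_{k+1}-x_k\|$ is absorbed into $-\tfrac{1}{10\gamma_k\eta_k}\|x_{k+1}-x_k\|^2$ via AM--GM and the second relation of \eqref{eq:adap-con} (using $\eta_{k-1}\le 2^{1/4}\eta_k$ to trade $\eta_{k-1}$ for $\eta_k$ in the constant); the nonsmooth and stochastic perturbations are handled by the Young-type estimates \eqref{eq:non-smooth-youngs}--\eqref{eq:stoch-F-youngs}; and because $\tfrac{\tau_k\theta_k}{\eta_k}=\tfrac{\tau_{k-1}}{\eta_{k-1}}$, the terms $\tfrac{\tau_k}{\eta_k}\inprod{\Delta\Ofrak_{k+1}}{x_{k+1}-x_F^*}-\tfrac{\tau_k\theta_k}{\eta_k}\inprod{\Delta\Ofrak_k}{x_k-x_F^*}$ still telescope, leaving only the $k=K-1$ boundary term, controlled by Young's inequality and the operator bounds.

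The heart of the argument is the quadratic term $\tsum_k\tfrac{\tau_k}{2\gamma_k\eta_k}\big[\|x_F^*-x_k\|^2-\|x_F^*-x_{k+1}\|^2\big]$. In contrast to \eqref{eq:step-size-con-bivel}, the weight $c_k:=\tfrac{\tau_k}{2\gamma_k\eta_k}=\tfrac{k^{1/4}}{2\gamma_k}$ is now \emph{increasing}, so the sum no longer collapses to a single term; I would instead use summation by parts, $\tsum_{k=1}^{K-1}c_k(b_k-b_{k+1})=c_1b_1-c_{K-1}b_K+\tsum_{k=2}^{K-1}(c_k-c_{k-1})b_k$ with $b_k=\|x_F^*-x_k\|^2\le 4D_X^2$, producing a bound of order $c_{K-1}D_X^2$. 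The same difficulty affects the stochastic cross term $-\tsum_k\tfrac{\tau_k}{\eta_k}\inprod{\delta_{k+1}^F+\eta_k\delta_{k+1}^H}{x_{k+1}-x_F^*}$: the monotonicity hypothesis $\tfrac{\tau_k}{\gamma_k\eta_k}\le\tfrac{\tau_{k-1}}{\gamma_{k-1}\eta_{k-1}}$ of Lemma~\ref{lemma:aug-seq} is now \emph{violated}, so I would replace that lemma by the version obtained by applying Proposition~\ref{prop:tech_res1-mo} to the auxiliary sequence $\{x_k^a\}$ (which takes values in $X_F^*$) and then Abel-summing, yielding the analogue of \eqref{eq:stoch_innerprod_delta-mo} with leading term $\tfrac{\tau_{K-1}}{2\eta_{K-1}}D_{X_F^*}^2$ in place of $\tfrac{\tau_1}{2\gamma_1\eta_1}D_X^2$, plus $\tsum_k\tfrac{\gamma_k\tau_k}{2\eta_k}\,\Ebb\|\delta_{k+1}^F+\eta_k\delta_{k+1}^H\|^2\le\tsum_k\tfrac{\gamma_k}{\eta_k}(\sigma_F^2+\eta_k^2\sigma_H^2)$ after taking expectations and invoking the tower property as in \eqref{eq:tower-propert}--\eqref{eq:expectation-2}; this is exactly where the combined factor $\tfrac{D_X^2+D_{X_F^*}^2}{D_X}$ of \eqref{eq:adapt-optim-mono} comes from. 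Dividing through by $\tsum_k\tau_k=K-1$ and inserting the explicit step-sizes then reduces everything to the elementary estimates $\tfrac{\tau_{K-1}}{\gamma_{K-1}\eta_{K-1}}=O\!\big(K^{3/4}\sqrt{M_F^2+2\sigma_F^2}/D_X\big)$, $\tsum_k\gamma_k=O\!\big(D_X\sqrt K/\sqrt{M_F^2+2\sigma_F^2}\big)$, $\tsum_k\gamma_k\eta_k^2=O\!\big(D_XK^{1/4}/\sqrt{M_H^2+2\sigma_H^2}\big)$ and $\tsum_k\eta_k\le\tfrac43(K+1)^{3/4}$, which give \eqref{eq:adapt-optim-mono}.

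Finally, for the feasibility gap I would repeat the same steps starting from \eqref{eq:feasib-1}, which already absorbs $\eta_kH(x)$ into the term $2C_HD_X\eta_k$, applying summation by parts once more to $\tsum_k\tfrac{1}{2\gamma_k}\big[\|x-x_k\|^2-\|x-x_{k+1}\|^2\big]$ since $\gamma_k^{-1}$ increases. The one genuine discrepancy is that the feasibility telescoping of the $\Delta\Ofrak$ terms requires $\tau_k\theta_k=\tau_{k-1}$ (the last relation of \eqref{eq:cond-2-conv}), which \eqref{eq:adap-step-size} does \emph{not} satisfy; the leftover is controlled by $\tsum_k\big|\inprod{\Delta\Ofrak_k}{x_k-x}\big|\le 2D_X\tsum_k\Ebb\|\Delta\Ofrak_k\|$ together with $\Ebb\|\Delta\Ofrak_k\|\le 2D_X(L_F+\eta_1L_H)+M_F+2\sigma_F+\eta_1(M_H+2\sigma_H)$ and the decay of $\gamma_k$, which is the origin of the $O\!\big((K+2)^{3/4}/K\big)$ term in \eqref{eq:adapt-feasib}; combined with $\tsum_k 2C_HD_X\eta_k\le\tfrac83C_HD_X(K+1)^{3/4}$ divided by $K-1$, it produces the remaining summands. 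The lower bound \eqref{eq:adapt-lower} then follows verbatim from the proof of Theorem~\ref{thm:lower-bound-mono}: $\alpha$-weak sharpness gives $\gap(\bar x_K,F,X)\ge\alpha\,\Ebb[\dist(\bar x_K,X_F^*)]$, and Cauchy--Schwarz with \eqref{eq:boundedness-H} at $\tilde x=\proj_{X_F^*}(\bar x_K)$ gives $\gap(\bar x_K,H,X_F^*)\ge-B_H\Ebb[\dist(\bar x_K,X_F^*)]\ge-\tfrac{B_H}{\alpha}\Ebb[\gap(\bar x_K,F,X)]$, into which \eqref{eq:adapt-feasib} is substituted. I expect the main obstacle to be exactly the Abel-summation bookkeeping in the two non-monotone spots — in particular establishing a replacement for Lemma~\ref{lemma:aug-seq} without its monotonicity hypothesis — and then checking that after dividing by $K-1$ the new leading terms are still $O(K^{-1/4})$, with the $D_{X_F^*}$-dependent and $\sigma_H$-dependent pieces contributing only at the smaller orders displayed in \eqref{eq:adapt-optim-mono}--\eqref{eq:adapt-feasib}.
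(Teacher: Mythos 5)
Your proposal is correct and follows essentially the same route as the paper's proof: the add-and-subtract manipulations in the paper's equations \eqref{eq:adapt-mono-optim-feasib-2}--\eqref{eq:adapt-mono-optim-feasib-4} are exactly your Abel summation on the increasing weights $\tfrac{\tau_k}{2\gamma_k\eta_k}$ combined with the auxiliary sequence $\{x_k^a\}\subset X_F^*$, which is indeed the source of the $\tfrac{D_X^2+D_{X_F^*}^2}{D_X}$ factor, and your treatment of the broken $\Delta\Ofrak$ telescoping in the feasibility bound via $\tsum_k(1-(\tfrac{k+1}{k+2})^{1/4})\le 2(K+2)^{3/4}$ is precisely the paper's \eqref{eq:adapt-telescope}--\eqref{eq:adapt-telescope-epectation-2}. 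The only blemish is the dropped factor $\gamma_{K-1}$ in your displayed leading term $\tfrac{\tau_{K-1}}{2\eta_{K-1}}D_{X_F^*}^2$ (it should read $\tfrac{\tau_{K-1}}{2\gamma_{K-1}\eta_{K-1}}D_{X_F^*}^2$), which your subsequent asymptotic $\tfrac{\tau_{K-1}}{\gamma_{K-1}\eta_{K-1}}=O(K^{3/4}\sqrt{M_F^2+2\sigma_F^2}/D_X)$ shows is merely a typo.
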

    \vspace{-5mm}
    \begin{proof}
        First, it is easy to see that step-size policy in \eqref{eq:adap-step-size} satisfies \eqref{eq:adap-con}. Therefore, similar to \eqref{eq:mono-optim-feasib-1} and multiplying both sides of \eqref{eq:mono-optim-feasib-1} we obtain the following 
        \begin{equation}\label{eq:adapt-mono-optim-feasib-1}
\Scale[0.9]{
    \begin{aligned}
        &\inprod{H(x_F^*)}{\tau_k(x_{k+1}-x^*_F)}\leq \tfrac{\tau_k}{2\gamma_k\eta_k}[\gnorm{x^*_F-x_{k}}{}{2}-\gnorm{x^*_F-x_{k+1}}{}{2} -  \gnorm{x_{k+1}-x_{k}}{}{2}]\\
        &+ \tfrac{\tau_k}{\eta_k}\inprod{\Delta \Ofrak_{k+1}}{x_{k+1}-x^*_F} - \tfrac{\tau_k\theta_k}{\eta_k}\inprod{\Delta \Ofrak_k}{x_k-x^*_F}
         + \tfrac{\tau_k\theta_k}{\eta_k}(L_F + \eta_{k-1} L_H)\gnorm{x_k-x_{k-1}}{}{}\gnorm{x_{k+1}-x_{k}}{}{} \\
         &+ \tfrac{\tau_k\theta_k}{\eta_k}(M_F + \eta_{k-1}M_H)\gnorm{x_{k+1}-x_k}{}{}\\
         & -\tfrac{\tau_k}{\eta_k}\inprod{\delta_{k+1}^F + \eta_k \delta_{k+1}^H}{x_{k+1}-x^*_F} - \tfrac{\tau_k\theta_k}{\eta_k}\inprod{\delta_k^F -\delta_{k-1}^F+ \eta_{k-1}[\delta_k^H -\delta_{k-1}^H]}{x_{k+1}-x_k}.
    \end{aligned}}
\end{equation}
Next, considering $-\tfrac{\tau_k}{\eta_k}\inprod{\delta_{k+1}^F + \eta_k \delta_{k+1}^H}{x_{k+1}-x^*_F}$, one can rewrite it as below 
\begin{equation}\label{eq:adapt-expectation-1}
\Scale[1]{
\begin{aligned}
    - \tfrac{\tau_k}{\eta_k}\inprod{\delta_{k+1}^F + \eta_k \delta_{k+1}^H}{x_{k+1}-x^*_F} = -&\tfrac{\tau_k}{\eta_k}\inprod{\delta_{k+1}^F + \eta_k \delta_{k+1}^H}{x_{k+1}^a-x^*_F} \\&-\tfrac{\tau_k}{\eta_k}\inprod{\delta_{k+1}^F + \eta_k \delta_{k+1}^H}{x_{k+1}-x_{k+1}^a},
\end{aligned}}
\end{equation}
Moreover, we add and subtract $\tfrac{\tau_{k-1}}{2\gamma_{k-1}\eta_{k-1}}\gnorm{x_F^*-x_k}{}{}$ to \eqref{eq:adapt-mono-optim-feasib-1}. Thus, we have the following 
\begin{equation}\label{eq:adapt-mono-optim-feasib-2}
\Scale[0.9]{
    \begin{aligned}
        &\inprod{H(x_F^*)}{\tau_k(x_{k+1}-x^*_F)}\leq \tfrac{\tau_{k-1}}{2\gamma_{k-1}\eta_{k-1}}\gnorm{x^*_F-x_{k}}{}{2}-\tfrac{\tau_k}{2\gamma_k\eta_k}\gnorm{x^*_F-x_{k+1}}{}{2} -  \tfrac{\tau_k}{2\gamma_k\eta_k} \gnorm{x_{k+1}-x_{k}}{}{2}\\
        &+ \tfrac{\tau_k}{\eta_k}\inprod{\Delta \Ofrak_{k+1}}{x_{k+1}-x^*_F} - \tfrac{\tau_k\theta_k}{\eta_k}\inprod{\Delta \Ofrak_k}{x_k-x^*_F}
         + \tfrac{\tau_k\theta_k}{\eta_k}(L_F + \eta_{k-1} L_H)\gnorm{x_k-x_{k-1}}{}{}\gnorm{x_{k+1}-x_{k}}{}{} \\
         &+ \tfrac{\tau_k\theta_k}{\eta_k}(M_F + \eta_{k-1}M_H)\gnorm{x_{k+1}-x_k}{}{}- \tfrac{\tau_k\theta_k}{\eta_k}\inprod{\delta_k^F -\delta_{k-1}^F+ \eta_{k-1}[\delta_k^H -\delta_{k-1}^H]}{x_{k+1}-x_k}\\
         &  -\tfrac{\tau_k}{\eta_k}\inprod{\delta_{k+1}^F + \eta_k \delta_{k+1}^H}{x_{k+1}^a-x^*_F} -\tfrac{\tau_k}{\eta_k}\inprod{\delta_{k+1}^F + \eta_k \delta_{k+1}^H}{x_{k+1}-x_{k+1}^a}\\ 
         & + [\tfrac{\tau_{k}}{2\gamma_{k}\eta_{k}}-\tfrac{\tau_{k-1}}{2\gamma_{k-1}\eta_{k-1}}]\gnorm{x^*_F-x_{k}}{}{2}
    \end{aligned}}
\end{equation}
Moreover, regarding \eqref{eq:recur-augmented} in Lemma \ref{lemma:aug-seq} and by adding and subtracting $\tfrac{\tau_{k-1}}{2\gamma_{k-1}\eta_{k-1}}\gnorm{x_F^*-x_{k+1}^a}{}{}$ to right-hand side of \eqref{eq:recur-augmented}, one can rewrite \eqref{eq:adapt-mono-optim-feasib-2} as follows 
\begin{equation}\label{eq:adapt-mono-optim-feasib-3}
\Scale[0.9]{
    \begin{aligned}
        &\inprod{H(x_F^*)}{\tau_k(x_{k+1}-x^*_F)}\leq \tfrac{\tau_{k-1}}{2\gamma_{k-1}\eta_{k-1}}\gnorm{x^*_F-x_{k}}{}{2}-\tfrac{\tau_k}{2\gamma_k\eta_k}\gnorm{x^*_F-x_{k+1}}{}{2} -  \tfrac{\tau_k}{2\gamma_k\eta_k} \gnorm{x_{k+1}-x_{k}}{}{2}\\
        &+ \tfrac{\tau_k}{\eta_k}\inprod{\Delta \Ofrak_{k+1}}{x_{k+1}-x^*_F} - \tfrac{\tau_k\theta_k}{\eta_k}\inprod{\Delta \Ofrak_k}{x_k-x^*_F}
         + \tfrac{\tau_k\theta_k}{\eta_k}(L_F + \eta_{k-1} L_H)\gnorm{x_k-x_{k-1}}{}{}\gnorm{x_{k+1}-x_{k}}{}{} \\
         &+ \tfrac{\tau_k\theta_k}{\eta_k}(M_F + \eta_{k-1}M_H)\gnorm{x_{k+1}-x_k}{}{}- \tfrac{\tau_k\theta_k}{\eta_k}\inprod{\delta_k^F -\delta_{k-1}^F+ \eta_{k-1}[\delta_k^H -\delta_{k-1}^H]}{x_{k+1}-x_k}\\
         & -\tfrac{\tau_k}{\eta_k}\inprod{\delta_{k+1}^F + \eta_k \delta_{k+1}^H}{x_{k+1}-x_{k+1}^a}
         + [\tfrac{\tau_{k}}{2\gamma_{k}\eta_{k}}-\tfrac{\tau_{k-1}}{2\gamma_{k-1}\eta_{k-1}}]\gnorm{x^*_F-x_{k}}{}{2} + [\tfrac{\tau_{k}}{2\gamma_{k}\eta_{k}}-\tfrac{\tau_{k-1}}{2\gamma_{k-1}\eta_{k-1}}]\gnorm{x^*_F-x_{k+1}^a}{}{2} \\
         &  + \tfrac{\tau_{k-1}}{2\gamma_{k-1}\eta_{k-1}}\gnorm{x^*_F-x_{k+1}^a}{}{2}  -  \tfrac{\tau_{k}}{2\gamma_{k}\eta_{k}} \gnorm{x^*_F-x_{k+2}^a}{}{2} + \tfrac{\tau_k\gamma_k}{2\eta_k} \gnorm{\delta_{k+1}^F+\eta_k\delta_{k+1}^H}{}{2},
    \end{aligned}}
\end{equation}
Now, let us sum \eqref{eq:adapt-mono-optim-feasib-3} from $k=1$ to $K-1$. We have 
\begin{equation*}\label{eq:adapt-mono-optim-feasib-4}
\Scale[1]{
    \begin{aligned}
        &\tsum_{k=1}^{K-1}\inprod{H(x_F^*)}{\tau_k(x_{k+1}-x^*_F)}\leq \tfrac{\tau_{0}}{2\gamma_{0}\eta_{0}}\gnorm{x^*_F-x_{1}}{}{2}-\tfrac{\tau_{K-1}}{2\gamma_{K-1}\eta_{K-1}}\gnorm{x^*_F-x_{K}}{}{2}
        + \tfrac{\tau_{0}}{2\gamma_{0}\eta_{0}}\gnorm{x^*_F-x_{2}^a}{}{2}\\
        &-  \tfrac{\tau_{K-1}}{2\gamma_{K-1}\eta_{K-1}} \gnorm{x^*_F-x_{K+1}^a}{}{2}
        + \tsum_{k=1}^{K-1}\tfrac{\tau_k}{\eta_k}\inprod{\Delta \Ofrak_{k+1}}{x_{k+1}-x^*_F} - \tfrac{\tau_k\theta_k}{\eta_k}\inprod{\Delta \Ofrak_k}{x_k-x^*_F}\\
        &
         + \tsum_{k=1}^{K-1}[\tfrac{\tau_k\theta_k}{\eta_k}(L_F + \eta_{k-1} L_H)\gnorm{x_k-x_{k-1}}{}{}\gnorm{x_{k+1}-x_{k}}{}{} -\tfrac{\tau_k}{10\gamma_k\eta_k}\gnorm{x_{k+1}-x_k}{}{2}\\
         &-\tfrac{\tau_{k-1}}{10\gamma_{k-1}\eta_{k-1}}\gnorm{x_{k}-x_{k-1}}{}{2}] 
         + \tsum_{k=1}^{K-1}\tfrac{\tau_k\theta_k}{\eta_k}(M_F + \eta_{k-1}M_H)\gnorm{x_{k+1}-x_k}{}{}-\tfrac{\tau_k}{10\gamma_k\eta_k}\gnorm{x_{k+1}-x_k}{}{2}\\
         & -  \tsum_{k=1}^{K-1}\tfrac{\tau_k\theta_k}{\eta_k}\inprod{\delta_k^F -\delta_{k-1}^F}{x_{k+1}-x_k}-\tfrac{\tau_k}{10\gamma_k\eta_k} \gnorm{x_{k+1}-x_{k}}{}{2}\\
         & - \tsum_{k=1}^{K-1}\tfrac{\tau_k\theta_k\eta_{k-1}}{\eta_k}\inprod{\delta_k^H -\delta_{k-1}^H}{x_{k+1}-x_k}-\tfrac{\tau_k}{10\gamma_k\eta_k} \gnorm{x_{k+1}-x_{k}}{}{2}\\
         & -\tsum_{k=1}^{K-1}\tfrac{\tau_k}{\eta_k}\inprod{\delta_{k+1}^F + \eta_k \delta_{k+1}^H}{x_{k+1}-x_{k+1}^a}\\
         &
         + [\tfrac{2\tau_{K-1}}{\gamma_{K-1}\eta_{K-1}}-\tfrac{2\tau_{0}}{\gamma_{0}\eta_{0}}](D_X^2 +D_{X_F^*}^2)\\
         &  +\tsum_{k=1}^{K-1} \tfrac{\tau_k\gamma_k}{2\eta_k} \gnorm{\delta_{k+1}^F+\eta_k\delta_{k+1}^H}{}{2}.
    \end{aligned}}
\end{equation*}
Note that 
\vspace{-3mm}
\begin{equation}\label{eq:adapt_DX}
    \tfrac{\tau_0}{2\gamma_0\eta_0}\gnorm{x_F^* -x_1}{}{2}\leq \tfrac{2\tau_0}{\gamma_0\eta_0}D_X^2,\quad \tfrac{\tau_0}{2\gamma_0\eta_0}\gnorm{x_F^* -x_2^a}{}{2}\leq \tfrac{2\tau_0}{\gamma_0\eta_0}D_{X_F^*}^2
\end{equation}
Then, considering conditions \eqref{eq:adap-con} and relations in \eqref{eq:non-smooth-youngs}, one obtains the following 
\begin{equation}\label{eq:adapt-mono-optim-feasib-5}
\Scale[0.9]{
    \begin{aligned}
        &\tsum_{k=1}^{K-1}\inprod{H(x_F^*)}{\tau_k(x_{k+1}-x^*_F)}\leq [\tfrac{2\tau_{K-1}}{\gamma_{K-1}\eta_{K-1}}](D_X^2 +D_{X_F^*}^2) + \tfrac{\tau_{K-1}}{\eta_{K-1}}\inprod{\Delta \Ofrak_{K}}{x_{K}-x^*_F}\\
         &+ \tsum_{k=1}^{K-1} \tfrac{5\theta^2\gamma_k\tau_k}{\eta_k}(M_F^2+\gnorm{\delta_k^F -\delta_{k-1}^F}{}{2}+\eta_{k-1}^2(M_H^2+\gnorm{\delta_k^F -\delta_{k-1}^F}{}{2}))
        \\
         &+\tsum_{k=1}^{K-1} \tfrac{\tau_k\gamma_k}{2\eta_k} \gnorm{\delta_{k+1}^F+\eta_k\delta_{k+1}^H}{}{2} -\tsum_{k=1}^{K-1}\tfrac{\tau_k}{\eta_k}\inprod{\delta_{k+1}^F + \eta_k \delta_{k+1}^H}{x_{k+1}-x_{k+1}^a}.
    \end{aligned}}
\end{equation}
Further, considering \eqref{eq:tower-propert} and using the similar arguments we used to obtain \eqref{eq:mono-optim-feasib-3-1}, we have the following bound for the optimality 
\begin{equation}\label{eq:adapt-mono-optim-feasib-6}
\Scale[0.9]{
    \begin{aligned}
        &\Ebb[\inprod{H(x_F^*)}{\bar{x}_K-x^*_F}]\leq [\tfrac{2\tau_{K-1}}{K\gamma_{K-1}\eta_{K-1}}](D_X^2 +D_{X_F^*}^2) + \tfrac{5\gamma_{K-1}}{K\eta_{K-1}}(M_F^2+2\sigma_F^2+\eta_{K-1}^2(M_H^2+2\sigma_H^2))\\
         &+ \tsum_{k=1}^{K-1} \tfrac{5\theta^2\gamma_k\tau_k}{K\eta_k}(M_F^2+2\sigma_F^2+\eta_{k-1}^2(M_H^2+2\sigma_H^2))
        +\tsum_{k=1}^{K-1} \tfrac{\tau_k\gamma_k}{2K\eta_k} (\sigma_F^2 + \eta_k\sigma_H^2).
    \end{aligned}}
\end{equation}
Using the updating rule in \eqref{eq:adap-step-size} we obtain the following 
\begin{equation}\label{eq:adapt-mono-optim-feasib-7}
\Scale[0.9]{
    \begin{aligned}
        &\Ebb[\inprod{H(x_F^*)}{\bar{x}_K-x^*_F}]\leq\tfrac{D_X^2 +D_{X_F^*}^2}{D_X}\bigg[\tfrac{16D_X(L_F + \eta_{K-1}L_H)}{K^{3/4}} + \tfrac{2\sqrt{M_F^2 + 2\sigma_F^2 + \eta^2_{K-1}(M_H^2 + 2\sigma_H^2)}}{K^{1/4}}\bigg]\\
        & + \tfrac{5D_X(M_F^2 + 2\sigma_F^2 + \eta^2_{K-1}(M_H^2 + 2\sigma_H^2))}{8K^{3/4}D_X (L_F+\eta_{K-1}L_H)+K^{5/4}\sqrt{M_F^2 + 2\sigma_F^2+\eta_{K-1}(M_H^2+ 2\sigma_H^2)}} + \tfrac{D_X(20 (M_F^2 + 2\sigma_F^2) + \sigma_F^2)}{3K^{1/4}\sqrt{M_F^2+2\sigma_F^2}} +\tfrac{D_X (10(M_H^2 + 2\sigma_H^2) + \sigma_H^2)}{K^{1/2}\sqrt{M_H^2 + 2\sigma_H^2}}.
    \end{aligned}}
\end{equation}
Finally, taking the maximum concerning the set $X^*_F$ and the definition of the optimality gap function, we obtain \eqref{eq:adapt-optim-mono}. The left-hand side of optimality uses the same argument we mentioned in Theorem \ref{thm:convex-case-optim-feasib}. Now, we move to the feasibility gap. Considering Lemma \ref{lem:one-iteration-iropex} and using the same procedure we used to get \eqref{eq:adapt-mono-optim-feasib-3}, we have
\begin{equation}\label{eq:adapt-mono-optim-feasib-8}
\Scale[0.9]{
    \begin{aligned}
        &\inprod{F(x)}{\tau_k(x_{k+1}-x)}\leq \tfrac{\tau_{k-1}}{2\gamma_{k-1}}\gnorm{x-x_{k}}{}{2}-\tfrac{\tau_k}{2\gamma_k}\gnorm{x-x_{k+1}}{}{2} -  \tfrac{\tau_k}{2\gamma_k} \gnorm{x_{k+1}-x_{k}}{}{2}\\
        &+ \tau_k\inprod{\Delta \Ofrak_{k+1}}{x_{k+1}-x} - \tau_k\theta_k\inprod{\Delta \Ofrak_k}{x_k-x}
         + \tau_k\theta_k(L_F + \eta_{k-1} L_H)\gnorm{x_k-x_{k-1}}{}{}\gnorm{x_{k+1}-x_{k}}{}{} \\
         &+ \tau_k\theta_k(M_F + \eta_{k-1}M_H)\gnorm{x_{k+1}-x_k}{}{}- \tau_k\theta_k\inprod{\delta_k^F -\delta_{k-1}^F+ \eta_{k-1}[\delta_k^H -\delta_{k-1}^H]}{x_{k+1}-x_k}\\
         & -\tau_k\inprod{\delta_{k+1}^F + \eta_k \delta_{k+1}^H}{x_{k+1}-x_{k+1}^a}
         + [\tfrac{\tau_{k}}{2\gamma_{k}}-\tfrac{\tau_{k-1}}{2\gamma_{k-1}}]\gnorm{x-x_{k}}{}{2} + [\tfrac{\tau_{k}}{2\gamma_{k}}-\tfrac{\tau_{k-1}}{2\gamma_{k-1}}]\gnorm{x-x_{k+1}^a}{}{2} \\
         &  + \tfrac{\tau_{k-1}}{2\gamma_{k-1}}\gnorm{x-x_{k+1}^a}{}{2}  -  \tfrac{\tau_{k}}{2\gamma_{k}} \gnorm{x-x_{k+2}^a}{}{2} + \tfrac{\tau_k\gamma_k}{2} \gnorm{\delta_{k+1}^F+\eta_k\delta_{k+1}^H}{}{2} + 2 C_H D_X\eta_k,
    \end{aligned}}
\end{equation}
Summing \eqref{eq:adapt-mono-optim-feasib-8} from $k=1$ to $K-1$, in view of \eqref{eq:adapt_DX} and using the same argument we used in \eqref{eq:adapt-mono-optim-feasib-5} we have  
\begin{equation}\label{eq:adapt-mono-optim-feasib-9}
    \Scale[.9]{
\begin{aligned}
    &\tsum_{k=1}^{K-1}\inprod{F(x)}{\tau_k(x_{k+1}-x)}\leq  \tfrac{4\tau_{K-1}}{\gamma_{K-1}}D_X^2
    + \tsum_{k=1}^{K-1}\tau_k\inprod{\Delta \Ofrak_{k+1}}{x_{k+1}-x} - \tau_k\theta_k\inprod{\Delta \Ofrak_k}{x_k-x}\\
         &+ \tsum_{k=1}^{K-1} 5\theta^2\gamma_k\tau_k(M_F^2+\gnorm{\delta_k^F -\delta_{k-1}^F}{}{2}+\eta_{k-1}^2(M_H^2+\gnorm{\delta_k^F -\delta_{k-1}^F}{}{2}))
        \\
         &+\tsum_{k=1}^{K-1} \tfrac{\tau_k\gamma_k}{2} \gnorm{\delta_{k+1}^F+\eta_k\delta_{k+1}^H}{}{2} + \tsum_{k=1}^{K-1} 2C_HD_X\eta_k -\tsum_{k=1}^{K-1}\tfrac{\tau_k}{\eta_k}\inprod{\delta_{k+1}^F + \eta_k \delta_{k+1}^H}{x_{k+1}-x_{k+1}^a}.
\end{aligned}
    }
\end{equation}
Moreover, we need to simplify $\tsum_{k=1}^{K-1}\tau_k\inprod{\Delta \Ofrak_{k+1}}{x_{k+1}-x} - \tau_k\theta_k\inprod{\Delta \Ofrak_k}{x_k-x}$ as below 
\begin{equation}\label{eq:adapt-telescope}
\begin{aligned}
     \tsum_{k=1}^{K-1}\tau_k\inprod{\Delta \Ofrak_{k+1}}{x_{k+1}-x} &- \tau_k\theta_k\inprod{\Delta \Ofrak_k}{x_k-x} = \tau_{K-1}\inprod{\Delta \Ofrak_{K}}{x_{K}-x}\\
     &+ \tsum_{k=1}^{K-1}(\tau_k - \tau_{k+1}\theta_{k+1})\inprod{\Delta \Ofrak_{k+1}}{x_{k+1}-x} - \tau_{1}\inprod{\Delta \Ofrak_{1}}{x_{1}-x}
\end{aligned}
\end{equation}
Further, from \eqref{eq:adapt-telescope} and \eqref{eq:adap-step-size}, one can bound $\Ebb[\tsum_{k=1}^{K-1}\tau_k\inprod{\Delta \Ofrak_{k+1}}{x_{k+1}-x} - \tau_k\theta_k\inprod{\Delta \Ofrak_k}{x_k-x}] $  as follows 
\begin{equation}\label{eq:adapt-telescope-epectation}
    \begin{aligned}
     \Ebb&[\tsum_{k=1}^{K-1}\tau_k\inprod{\Delta \Ofrak_{k+1}}{x_{k+1}-x} - \tau_k\theta_k\inprod{\Delta \Ofrak_k}{x_k-x}]\leq 2D_X[2D_X(2L_F + (\eta_{K-1} + \eta_1)L_H)\\
     &+ 2M_F + 4\sigma_F + (\eta_{K-1}+\eta_1)(M_H+2\sigma_H)]\\
     &+ 2D_X[2D_X(L_F +  \eta_1 L_H)
     + M_F + 2\sigma_F + \eta_1(M_H+2\sigma_H)]\tsum_{k=1}^{K-1}(1-(\tfrac{k+1}{k+2})^{1/4})
\end{aligned}
\end{equation}
Note that $\tsum_{k=1}^{K-1}(1-(\tfrac{k+1}{k+2})^{1/4}) =\tsum_{k=1}^{K-1}(1-(1-\tfrac{1}{k+2})^{1/4}) \leq 2(K+2)^{3/4} $. Therefore we can rewrite \eqref{eq:adapt-telescope-epectation} below 
\begin{equation}\label{eq:adapt-telescope-epectation-2}
    \begin{aligned}
     \Ebb&[\tsum_{k=1}^{K-1}\tau_k\inprod{\Delta \Ofrak_{k+1}}{x_{k+1}-x} - \tau_k\theta_k\inprod{\Delta \Ofrak_k}{x_k-x}]\leq 2D_X[2D_X(2L_F + (\eta_{K-1} + \eta_1)L_H)\\
     &+ 2M_F + 4\sigma_F + (\eta_{K-1}+\eta_1)(M_H+2\sigma_H)]
     + 2D_X[2D_X(L_F +  \eta_1 L_H)\\
     &+ M_F + 2\sigma_F + \eta_1(M_H+2\sigma_H)]2(K+2)^{3/4}.
\end{aligned}
\end{equation}
By an analogous reasoning we used to obtain \eqref{eq:adapt-mono-optim-feasib-6}, we have
\begin{equation}\label{eq:adapt-mono-optim-feasib-11}
    \Scale[1]{
\begin{aligned}
    &\Ebb[\inprod{F(x)}{\bar{x}_K-x}]\leq\tfrac{4\tau_{K-1}}{K\gamma_{K-1}}D_X^2
    + 2\tfrac{D_X}{K}[2D_X(2L_F + (\eta_{K-1} + \eta_1)L_H)\\
     &+ 2M_F + 4\sigma_F + (\eta_{K-1}+\eta_1)(M_H+2\sigma_H)]\\
     &+ 4\tfrac{D_X(K+2)^{3/4}}{K}[2D_X(L_F +  \eta_1 L_H)
     + M_F + 2\sigma_F + \eta_1(M_H+2\sigma_H)]\\
         &+ \tsum_{k=1}^{K-1} \tfrac{5\theta^2\gamma_k\tau_k}{K}(M_F^2+2\sigma_F^2+\eta_{k-1}^2(M_H^2+2\sigma_H^2))
        \\
         &+\tsum_{k=1}^{K-1} \tfrac{\tau_k\gamma_k}{2K} (\sigma_F^2 + \eta_k \sigma_H^2) + \tsum_{k=1}^{K-1} \tfrac{2C_HD_X\eta_k }{K}.
\end{aligned}
    }
\end{equation}
Using the step-size policy in \eqref{eq:adap-step-size} and taking the maximum from both sides regarding set $X$ of \eqref{eq:adapt-mono-optim-feasib-11}, we obtain the following upper bound
\begin{equation*}\label{eq:adapt-mono-optim-feasib-12}
    \Scale[.9]{
\begin{aligned}
    &\Ebb[\gap(\bar{x}_K, F, X)]\leq  D_X\bigg[\tfrac{32D_X(L_F + \eta_{K-1}L_H)}{K} + \tfrac{2\sqrt{M_F^2 + 2\sigma_F^2 + \eta^2_{K-1}(M_H^2 + 2\sigma_H^2)}}{K^{1/2}}\bigg]\\
    &
    + 2\tfrac{D_X}{K}[2D_X(2L_F + (\eta_{K-1} + \eta_1)L_H)
     + 2M_F + 4\sigma_F + (\eta_{K-1}+\eta_1)(M_H+2\sigma_H)]\\
     &+ 4\tfrac{D_X(K+2)^{3/4}}{K}[2D_X(L_F +  \eta_1 L_H)
     + M_F + 2\sigma_F + \eta_1(M_H+2\sigma_H)]\\
         &+ \tfrac{D_X(20 (M_F^2 + 2\sigma_F^2) + \sigma_F^2)}{3K^{1/2}\sqrt{M_F^2+2\sigma_F^2}} +\tfrac{D_X (10(M_H^2 + 2\sigma_H^2) + \sigma_H^2)}{K^{3/4}\sqrt{M_H^2 + 2\sigma_H^2}}
        + \tfrac{2C_HD_X}{K^{1/4}}.\\
\end{aligned}
    }
\end{equation*}
The lower bound for the optimality gap is obtained through the same line of argument we used in Theorem \ref{thm:lower-bound-mono}. 
    \end{proof}
    Note that the convergence results for the strongly monotone case are the same as Remark \ref{rem:strongly-mono} and we do not mention it again for brevity. 
\subsection{Convergence analysis for the smooth VI}\label{sec:smooth-VI-appden}
 \begin{theorem}\label{thm:convex-case-optim-feasib-smooth-stoch-F}
         Let us assume that problem \eqref{eq:bilevel-vi} is monotone $(\mu_H = 0)$. Additionally, let $M_F = 0$, and suppose we have the following step-size policy with mini-batching of size $B = K$
         \vspace{-2mm}
         \begin{equation}\label{eq:step-size-con-bivel-smooth-stoch-F}
         \Scale[.95]{
             \tau_k =\tau =1,\quad \theta_k =\theta=1,\quad \eta_k= \eta= K^{-\tfrac{1}{2}},\quad \gamma_k = \gamma =\tfrac{D_X}{8D_X(L_F +\eta L_H)+ \sqrt{M_H^2+2(\sigma_H^2 + \sigma_F^2)}},}
         \end{equation}
         Then, Algorithm \ref{alg:IRopex} gives the following optimality and feasibility gaps for $K\geq 1$
         \begin{equation}\label{eq:mono-optim-smooth-stoch-2}
\Scale[1]{
\begin{aligned}
    -B_H\Ebb[\text{dist}(\bar{x}_K,X^*_F)]\leq \Ebb[\gap(\bar{x}_K, H, X^*_F)] \leq  & D_X\bigg[\big(16D_X(\tfrac{L_F}{K^{1/2}} +\tfrac{ L_H}{K}) + \tfrac{2\sqrt{M_H^2+ 2(\sigma_H^2 + \sigma_F^2)}}{K^{1/2}} \big)\\
    &+\tfrac{5(M_H^2+2(\sigma_H^2+\sigma_F^2)) + \sigma_H^2+\sigma_F^2}{8D_XK^{1/2}(L_F + \eta L_H) + K^{1/2}\sqrt{M_H^2+2(\sigma_H^2+\sigma_F^2)}}\\
         &+ \tfrac{5(M_H^2+2(\sigma_H^2+\sigma_F^2))}{8D_XK^{3/2}(L_F + \eta L_H) + K^{3/2}\sqrt{M_H^2+2(\sigma_H^2+\sigma_F^2)}}\bigg].
     \end{aligned}}
\end{equation}
\vspace{-3mm}
and 
\begin{equation}\label{eq:mono-feasib-smooth-stoch-F}
\Scale[.9]{
    \begin{aligned}
        \Ebb[\gap(\bar{x}_K,F,X)]\leq &  D_X\bigg[\big(16D_X(\tfrac{L_F}{K} +\tfrac{ L_H}{K^{3/2}}) + \tfrac{\sqrt{M_H^2+2(\sigma_H^2+\sigma_F^2)}}{K} \big)+\tfrac{5(M_H^2+2(\sigma_H^2+\sigma_F^2)) + \sigma_H^2+\sigma_F^2}{8D_XK(L_F + \eta L_H) + K\sqrt{M_H^2+2(\sigma_H^2+\sigma_F^2)}}
         \\
         &+ \tfrac{5(M_H^2+2(\sigma_H^2+\sigma_F^2))}{8D_XK^{2}(L_F + \eta L_H) + K^{2}\sqrt{M_H^2+2(\sigma_H^2+\sigma_F^2)}}+\tfrac{2 C_H}{K^{1/2}}].
    \end{aligned}}
\end{equation}
\end{theorem}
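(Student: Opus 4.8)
The plan is to follow the proof of Theorem~\ref{thm:convex-case-optim-feasib} (that is, Lemma~\ref{lemma:optim-feasib}) essentially verbatim, changing only the stochastic-error bookkeeping to account for the fact that $F$ is now smooth ($M_F=0$) and is estimated by a mini-batch of size $B=K$. First I would check that the policy \eqref{eq:step-size-con-bivel-smooth-stoch-F} satisfies the conditions \eqref{eq:cond-1-conv}--\eqref{eq:cond-2-conv}: since $\tau_k,\theta_k,\eta_k$ are all constant and $\theta_k=1$, the monotonicity requirements and the identities $\tfrac{\tau_k\theta_k}{\eta_k}=\tfrac{\tau_{k-1}}{\eta_{k-1}}$ and $\tau_k\theta_k=\tau_{k-1}$ hold with equality, and since $\gamma\le\tfrac{1}{8(L_F+\eta L_H)}\le\tfrac{1}{8\sqrt{L_F^2+\eta^2L_H^2}}$ the curvature bound $\theta_k(L_F^2+\eta_k^2L_H^2)\le\tfrac{1}{50\gamma_k\gamma_{k-1}}$ holds as well.

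Next I would re-run the derivation leading to \eqref{eq:mono-optim-feasib-3-1}. Replacing $\Ffrak_k$ by the size-$K$ average $\tfrac1K\tsum_{i=1}^K\Ffrak(x_k,\xi_i)$ in Algorithm~\ref{alg:IRopex} leaves the martingale structure intact — $x_{k+1}$ and the auxiliary iterate $x_{k+1}^a$ of Lemma~\ref{lemma:aug-seq} still depend only on the samples used up to iteration $k$, so the tower-property identity \eqref{eq:tower-propert} and the conclusion of Lemma~\ref{lemma:aug-seq} are unchanged — but it reduces the inner-operator variance to $\Ebb[\gnorm{\delta_k^F}{}{2}]\le\sigma_F^2/K$. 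Therefore every occurrence of $\sigma_F^2$ in \eqref{eq:mono-optim-feasib-3-1}, and in its feasibility counterpart \eqref{eq:mono-optim-feasib-6}, gets replaced by $\sigma_F^2/K$; and because $M_F=0$ and $\eta^2=1/K$, the composite noise coefficient $M_F^2+2\sigma_F^2+\eta^2(M_H^2+2\sigma_H^2)$ collapses to $\tfrac1K\bigl(M_H^2+2(\sigma_H^2+\sigma_F^2)\bigr)$ while $\sigma_F^2+\eta^2\sigma_H^2$ becomes $\tfrac1K(\sigma_F^2+\sigma_H^2)$. Substituting the explicit $\gamma$ and $\eta=K^{-1/2}$ and using $\tfrac{D_X^2}{K\gamma\eta}=\tfrac{D_X}{K^{1/2}}\bigl(8D_X(L_F+\eta L_H)+\sqrt{M_H^2+2(\sigma_H^2+\sigma_F^2)}\bigr)$ together with $\tfrac{\gamma}{K\eta}=\tfrac{\gamma}{K^{1/2}}$, then maximizing over $x_F^*\in X_F^*$, gives the optimality upper bound \eqref{eq:mono-optim-smooth-stoch-2}; in particular the $\tfrac{L_F}{K^{1/2}}$ and $\tfrac{L_H}{K}$ rates emerge from $\tfrac{1}{K\gamma\eta}$ combined with $\eta=K^{-1/2}$. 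The feasibility bound \eqref{eq:mono-feasib-smooth-stoch-F} follows identically from \eqref{eq:feasib-1}--\eqref{eq:mono-optim-feasib-6}, the $\tfrac{2C_H}{K^{1/2}}$ term coming from $\tfrac1K\tsum_{k=1}^{K-1}\eta_k=K^{-1/2}$ in the estimate $\inprod{F(x)}{x_{k+1}-x}\le\inprod{F(x)+\eta_kH(x)}{x_{k+1}-x}+2C_HD_X\eta_k$.

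For the left-hand (lower-bound) inequality I would reuse the Cauchy--Schwarz argument from Theorem~\ref{thm:convex-case-optim-feasib}: since $\bar{x}_K\in X$, for any $\tilde{x}\in X_F^*$ we have $\inprod{H(\tilde{x})}{\bar{x}_K-\tilde{x}}\le\gap(\bar{x}_K,H,X_F^*)$, hence $-B_H\gnorm{\bar{x}_K-\tilde{x}}{}{}\le\gap(\bar{x}_K,H,X_F^*)$ by \eqref{eq:boundedness-H}; taking $\tilde{x}=\proj_{X_F^*}(\bar{x}_K)$ and then expectations gives the stated lower bound.

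The main obstacle — really the only genuinely new point beyond transcribing the earlier proof — is verifying that $B=K$ is precisely the batch size that balances the two dominant error contributions: after summing and averaging, the $F$-variance enters at order $\gamma\sigma_F^2/(B\eta)$ while the regularization bias enters at order $\gamma\eta(M_H^2+\sigma_H^2)$, and these are of the same order only when $B$ is proportional to $K$ for the choice $\eta=K^{-1/2}$. I would therefore keep the batch size $B$ symbolic through the summation step and specialize to $B=K$ only at the end, and I would double-check that shrinking the variance does not affect the third condition in \eqref{eq:cond-1-conv}, which it does not since that condition is independent of the stochastic parameters.
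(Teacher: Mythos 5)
Your proposal is correct and follows exactly the route the paper intends: the paper's own proof of this theorem is a one-line reference to the argument of Theorem \ref{thm:convex-case-optim-feasib}, and your transcription of that argument — verifying the step-size conditions \eqref{eq:cond-1-conv}--\eqref{eq:cond-2-conv}, replacing $\sigma_F^2$ by $\sigma_F^2/K$ via the size-$K$ mini-batch while keeping the martingale structure of Lemma \ref{lemma:aug-seq} intact, and collapsing the composite noise term to $\tfrac{1}{K}(M_H^2+2(\sigma_H^2+\sigma_F^2))$ under $M_F=0$, $\eta^2=1/K$ — is precisely the bookkeeping that yields \eqref{eq:mono-optim-smooth-stoch-2} and \eqref{eq:mono-feasib-smooth-stoch-F}. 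Your added explanation of why $B=K$ balances the $\gamma\sigma_F^2/(B\eta)$ variance term against the $\gamma\eta(M_H^2+\sigma_H^2)$ bias term is a correct and useful detail that the paper leaves implicit.
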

\vspace{-3mm}
\begin{proof}
    The proof stems from the same reasoning we used in Theorem \ref{thm:convex-case-optim-feasib}.
\end{proof}
 \begin{theorem}\label{thm:convex-case-optim-feasib-smooth}
         Let us assume problem \eqref{eq:bilevel-vi} is monotone $(\mu_H = 0)$ and $H_F = \sigma_F =0$. Suppose we have the following step-size policy 
         \begin{equation}\label{eq:step-size-con-bivel-smooth}
         \Scale[.95]{
             \tau_k =\tau =1,\quad \theta_k =\theta=1,\quad \eta_k= \eta= K^{-\tfrac{1}{2}},\quad \gamma_k = \gamma =\tfrac{D_X}{8D_X(L_F +\eta L_H)+ \sqrt{M_H^2+2\sigma_H^2}},}
         \end{equation}
         Then, Algorithm \ref{alg:IRopex} gives the following optimality and feasibility gaps for $K\geq 1$
         \begin{equation}\label{eq:mono-optim-smooth}
\Scale[1]{
\begin{aligned}
    -B_H\Ebb[\text{dist}(\bar{x}_K,X^*_F)]\leq \Ebb[\gap(\bar{x}_K, H, X^*_F)] \leq  & D_X\bigg[\big(16D_X(\tfrac{L_F}{K^{1/2}} +\tfrac{ L_H}{K}) + \tfrac{2\sqrt{M_H^2+ 2\sigma_H^2 }}{K^{1/2}} \big)\\
    &+\tfrac{5(M_H^2+2\sigma_H^2) + \sigma_H^2}{8D_XK^{1/2}(L_F + \eta L_H) + K^{1/2}\sqrt{M_H^2+2\sigma_H^2}}\\
         &+ \tfrac{5(M_H^2+2\sigma_H^2)}{8D_XK^{3/2}(L_F + \eta L_H) + K^{3/2}\sqrt{M_H^2+2\sigma_H^2}}\bigg].
     \end{aligned}}
\end{equation}
and 
\begin{equation}\label{eq:mono-feasib-smooth}
\Scale[.9]{
    \begin{aligned}
        \Ebb[\gap(\bar{x}_K,F,X)]\leq &  D_X\bigg[\big(16D_X(\tfrac{L_F}{K} +\tfrac{ L_H}{K^{3/2}}) + \tfrac{\sqrt{M_H^2+2\sigma_H^2}}{K} \big)+\tfrac{5(M_H^2+2\sigma_H^2) + \sigma_H^2}{8D_XK(L_F + \eta L_H) + K\sqrt{M_H^2+2\sigma_H^2}}
         \\
         &+ \tfrac{5(M_H^2+2\sigma_H^2)}{8D_XK^{2}(L_F + \eta L_H) + K^{2}\sqrt{M_H^2+2\sigma_H^2}}+\tfrac{2 C_H}{K^{1/2}}].
    \end{aligned}}
\end{equation}
Moreover, considering the following policy in the strongly monotone case $(\mu_H>0)$, 
\begin{equation}\label{eq:step-size-con-bivel-smooth-strong}
         \Scale[.95]{
             \tau_k  =k+1,\quad \theta_k =\tfrac{k}{k+1},\quad \eta_k= \eta= K^{-\tfrac{1}{2}},\quad \gamma_k = \gamma =\tfrac{D_X}{8D_X(L_F +\eta L_H)+ \sqrt{M_H^2+2\sigma_H^2}},}
         \end{equation}
         Then, for $K\geq \tfrac{1}{2\gamma_k\eta_{k-1}\mu_H}$, Algorithm \ref{alg:IRopex} gives the following optimality and feasibility gaps
         \begin{equation}\label{eq:mono-optim-smooth-strong}
\Scale[1]{
\begin{aligned}
   -B_H\Ebb[\text{dist}(\bar{x}_K,X^*_F)]\leq \Ebb[\gap(\bar{x}_K, H, X^*_F)] \leq  & D_X\bigg[\big(16D_X(\tfrac{L_F}{K^{3/2}} +\tfrac{ L_H}{K^2}) + \tfrac{2\sqrt{M_H^2+ 2\sigma_H^2 }}{K^{3/2}} \big)\\
   &+\tfrac{5(M_H^2+2\sigma_H^2) + \sigma_H^2}{8D_XK^{3/2}(L_F + \eta L_H) + K^{3/2}\sqrt{M_H^2+2\sigma_H^2}}\\
         &+ \tfrac{5(M_H^2+2\sigma_H^2}{8D_XK^{3/2}(L_F + \eta L_H) + K^{5/2}\sqrt{M_H^2+2\sigma_H^2}}\bigg].
     \end{aligned}}
\end{equation}
\begin{equation}\label{eq:mono-feasib-smooth-strong}
\Scale[.9]{
    \begin{aligned}
        \Ebb[\gap(\bar{x}_K,F,X)]\leq &  D_X\bigg[\big(16D_X(\tfrac{L_F}{K^2} +\tfrac{ L_H}{K^{5/2}}) + \tfrac{\sqrt{M_H^2+2\sigma_H^2}}{K^2} \big)+\tfrac{5(M_H^2+2\sigma_H^2) + \sigma_H^2}{8D_XK^2(L_F + \eta L_H) + K^2\sqrt{M_H^2+2\sigma_H^2}}
         \\
         &+ \tfrac{5(M_H^2+2\sigma_H^2)}{8D_XK^{3}(L_F + \eta L_H) + K^{3}\sqrt{M_H^2+2\sigma_H^2}}+\tfrac{2 C_H}{K^{3/2}}].
    \end{aligned}}
\end{equation}
\end{theorem}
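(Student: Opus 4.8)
The plan is to specialize the analyses of Theorem~\ref{thm:convex-case-optim-feasib} (monotone) and Theorem~\ref{thm:convex-case-optim-feasib-strong} (strongly monotone) to the regime $M_F=\sigma_F=0$, in which several terms of the one-iteration recursion in Lemma~\ref{lem:one-iteration-iropex} vanish identically; this is the $\sigma_F\to 0$ further simplification of Theorem~\ref{thm:convex-case-optim-feasib-smooth-stoch-F}, so no mini-batching is needed. First I would note that $\sigma_F=0$ forces $\Ffrak_k=F(x_k)$ deterministically, hence $\delta_k^F=0$ for every $k$, while $M_F=0$ removes the nonsmooth contribution of the inner operator. Consequently, in \eqref{eq:one-iteration-iropex} the term $\theta_k M_F\gnorm{x_{k+1}-x_k}{}{}$ disappears, the inner products $\inprod{\delta_{k+1}^F}{\cdot}$ and $\theta_k\inprod{\delta_k^F-\delta_{k-1}^F}{\cdot}$ vanish, and the only surviving noise enters through $\eta_k\delta_{k+1}^H$ and $\eta_{k-1}(\delta_k^H-\delta_{k-1}^H)$, i.e.\ scaled down by the regularization weight --- which is precisely why the larger choice $\eta=K^{-1/2}$ is affordable here.

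For the monotone case I would run the proof of Lemma~\ref{lemma:optim-feasib} with the simplifications above: take $x=x^*_F$ in the recursion and use $\inprod{F(x^*_F)}{x_{k+1}-x^*_F}\ge 0$ to isolate $\inprod{H(x^*_F)}{x_{k+1}-x^*_F}$; multiply by $\tau_k$ and telescope the $\tfrac{1}{2\gamma_k\eta_k}$ terms under $\tfrac{\tau_k}{\gamma_k\eta_k}\le\tfrac{\tau_{k-1}}{\gamma_{k-1}\eta_{k-1}}$ and $\tfrac{\tau_k\theta_k}{\eta_k}=\tfrac{\tau_{k-1}}{\eta_{k-1}}$; absorb the cross term $\theta_k(L_F+\eta_{k-1}L_H)\gnorm{x_k-x_{k-1}}{}{}\gnorm{x_{k+1}-x_k}{}{}$ using $\theta_k(L_F^2+\eta_k^2L_H^2)\le\tfrac{1}{50\gamma_k\gamma_{k-1}}$; invoke the auxiliary sequence $\{x_k^a\}$ of Lemma~\ref{lemma:aug-seq} together with the tower property \eqref{eq:tower-propert} to dispose of $\tsum_k\tfrac{\tau_k}{\eta_k}\inprod{\eta_k\delta_{k+1}^H}{x_{k+1}-x^*_F}$; and finally substitute \eqref{eq:step-size-con-bivel-smooth}. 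The feasibility bound follows in the same way with $x\in X$ arbitrary, the extra contribution $\inprod{\eta_k H(x)}{x_{k+1}-x}\le 2C_HD_X\eta_k$ producing the $\tfrac{2C_H}{K^{1/2}}$ term in \eqref{eq:mono-feasib-smooth}. The lower bound $-B_H\Ebb[\dist(\bar{x}_K,X^*_F)]$ follows exactly as in Theorem~\ref{thm:convex-case-optim-feasib}: with $\tilde{x}=\proj_{X^*_F}(\bar{x}_K)$ we have $\inprod{H(\tilde{x})}{\bar{x}_K-\tilde{x}}\le\gap(\bar{x}_K,H,X^*_F)$, and Cauchy--Schwarz with \eqref{eq:boundedness-H} closes the argument.

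For the strongly monotone case ($\mu_H>0$) the recursion carries the additional term $-\eta_k\mu_H\gnorm{x-x_{k+1}}{}{2}$, the weights become $\tau_k=k+1$, $\theta_k=\tfrac{k}{k+1}$, and the telescoping conditions \eqref{eq:cond-1-conv}--\eqref{eq:cond-2-conv} are replaced by \eqref{eq:cond-1-conv-str}--\eqref{eq:cond-2-conv-str}; the hypothesis $K\ge\tfrac{1}{2\gamma_k\eta_{k-1}\mu_H}$ is precisely what makes $\tfrac{\tau_k}{2\gamma_k\eta_k}\le\tfrac{\tau_{k-1}}{\eta_{k-1}}\big(\tfrac{1}{2\gamma_{k-1}}+\eta_{k-1}\mu_H\big)$ hold. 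Following the template of Theorem~\ref{thm:convex-case-optim-feasib-strong} with $M_F=\sigma_F=0$, the averaging denominator $\tsum_{k=1}^{K-1}\tau_k$, now of order $K^2$, sharpens each term by a factor $K$ relative to the monotone case, yielding \eqref{eq:mono-optim-smooth-strong}--\eqref{eq:mono-feasib-smooth-strong}, with the accompanying lower bound coming from the same Cauchy--Schwarz step.

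I expect the main obstacle to be bookkeeping rather than any conceptual difficulty: verifying that the policies \eqref{eq:step-size-con-bivel-smooth} and \eqref{eq:step-size-con-bivel-smooth-strong} genuinely satisfy \eqref{eq:cond-1-conv}--\eqref{eq:cond-2-conv} (resp.\ \eqref{eq:cond-1-conv-str}--\eqref{eq:cond-2-conv-str}), in particular the quadratic Lipschitz inequality with the explicit constant $50$, and then carefully tracking the exact powers of $K$ through the four summation terms after inserting $\eta=K^{-1/2}$ and $\gamma=\Theta\big(D_X/(D_X(L_F+\eta L_H)+\sqrt{M_H^2+2\sigma_H^2})\big)$. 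The delicate point is that $\eta=K^{-1/2}$ (rather than $K^{-1/4}$ as in the general nonsmooth case) is chosen so that the regularization-bias term $2C_H\eta$ in the feasibility gap is balanced against the $K^{-1/2}$ statistical terms, which is what upgrades the overall rate from $\Ocal(K^{-1/4})$ to $\Ocal(K^{-1/2})$.
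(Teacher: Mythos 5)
Your proposal is correct and follows essentially the same route as the paper, whose own proof of this theorem is simply the remark that it is obtained by specializing the arguments of Theorem~\ref{thm:convex-case-optim-feasib} (via Lemma~\ref{lemma:optim-feasib}) and Theorem~\ref{thm:convex-case-optim-feasib-strong} to $M_F=\sigma_F=0$ with the step-sizes \eqref{eq:step-size-con-bivel-smooth} and \eqref{eq:step-size-con-bivel-smooth-strong}. Your observations that $\delta_k^F\equiv 0$ kills the inner-level noise terms, that the surviving $H$-noise is damped by $\eta$, and that $\eta=K^{-1/2}$ balances the bias term $2C_H\eta$ against the remaining $K^{-1/2}$ terms are exactly the points that make this specialization go through.
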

\begin{proof}
    The proof is similar to Theorem \ref{thm:convex-case-optim-feasib} for the monotone case and Theorem \ref{thm:convex-case-optim-feasib-strong} for the strongly monotone case. 
\end{proof}
Theorem \ref{thm:convex-case-optim-feasib-smooth} shows the improvement in convergence rate in terms of $M_H$ and $\sigma_H$ from $\mathcal{O}(K^{-1/4})$ to $\mathcal{O}(K^{-1/2})$ .
Similar to the nonsmooth stochastic case, we provide the following lower bound for the monotone problem with smooth deterministic operator $F$. 
\begin{theorem}\label{thm:monotone-lower-smooth}
    Suppose $\text{VI}(F,X)$ is $\alpha$-weakly sharp. Then we have 
           \begin{equation}\label{eq:lower-bound-mono-smooth}
           \Scale[1]{
           \begin{aligned}
                \Ebb[\gap(\bar{x}_K, H, X^*_F)]& \geq  -\tfrac{B_H}{\alpha}\Big[2D_X^2\big(8(\tfrac{L_F}{K^{1/2}} +\tfrac{ L_H}{K}) + \tfrac{\sqrt{M_H^2+2\sigma_H^2}}{K^{1/2}} \big)\\
                &+\tfrac{5(M_H^2+2\sigma_H^2) + \sigma_H^2}{8K^{1/2}(L_F + \eta L_H) + K^{1/2}\sqrt{M_H^2+2\sigma_H^2}}
         \\
         &+ \tfrac{5(M_H^2+2\sigma_H^2)}{8K^{3/2}(L_F + \eta L_H) + K^{3/2}\sqrt{M_H^2+\sigma_H^2}}+\tfrac{2 C_H D_X}{K^{1/2}}\Big].
           \end{aligned}}
           \end{equation}
\end{theorem}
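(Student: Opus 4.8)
The plan is to mirror the argument used for Theorem~\ref{thm:lower-bound-mono}, now invoking the feasibility bound for the inner-smooth deterministic setting rather than the nonsmooth one. First I would fix an arbitrary $x^*_F\in X^*_F$ and apply the $\alpha$-weak sharpness of $\text{VI}(F,X)$ from Definition~\ref{def:weak-sharp}, which gives $\Ebb[\inprod{F(x^*_F)}{\bar{x}_K-x^*_F}]\geq \alpha\,\Ebb[\text{dist}(\bar{x}_K,X^*_F)]$. Since $\bar{x}_K\in X$ and $x^*_F\in X$, the definition of the feasibility gap in \eqref{eq:feasib-gap} yields $\Ebb[\gap(\bar{x}_K,F,X)]\geq \Ebb[\inprod{F(x^*_F)}{\bar{x}_K-x^*_F}]$, so combining the two inequalities produces the key bound $\Ebb[\text{dist}(\bar{x}_K,X^*_F)]\leq \tfrac{1}{\alpha}\,\Ebb[\gap(\bar{x}_K,F,X)]$.

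Next I would recall the lower bound on the optimality gap that is already part of Theorem~\ref{thm:convex-case-optim-feasib-smooth}: namely $-B_H\,\Ebb[\text{dist}(\bar{x}_K,X^*_F)]\leq \Ebb[\gap(\bar{x}_K,H,X^*_F)]$, which follows exactly as in the last paragraph of the proof of Lemma~\ref{lemma:optim-feasib} (using the Cauchy--Schwarz inequality, the boundedness assumption \eqref{eq:boundedness-H}, and the choice $\Tilde{x}=\proj_{X^*_F}(\bar{x}_K)$ in Definition~\ref{def:optim-gap}). Substituting the bound on $\Ebb[\text{dist}(\bar{x}_K,X^*_F)]$ from the first step into this inequality gives
\[
\Ebb[\gap(\bar{x}_K,H,X^*_F)]\geq -\tfrac{B_H}{\alpha}\,\Ebb[\gap(\bar{x}_K,F,X)].
\]
Finally I would plug in the explicit feasibility upper bound \eqref{eq:mono-feasib-smooth} of Theorem~\ref{thm:convex-case-optim-feasib-smooth}, specialized to the deterministic smooth inner case $M_F=\sigma_F=0$ and the step-size choice $\eta=K^{-1/2}$, and simplify the resulting constants to arrive at \eqref{eq:lower-bound-mono-smooth}.

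The argument is essentially a concatenation of three facts already established in the excerpt (weak sharpness, the optimality-gap lower bound, and the feasibility-gap upper bound), so no single step is a genuine obstacle; the only care needed is bookkeeping in the last step --- tracking how the $D_X$ and $D_X^2$ prefactors combine after the $\eta=K^{-1/2}$ substitution, and verifying that the $C_H$ term together with the $\sqrt{M_H^2+2\sigma_H^2}$ denominators match the stated form once the $B_H/\alpha$ factor is absorbed.
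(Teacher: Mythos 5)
Your proposal is correct and follows essentially the same route as the paper, which proves Theorem~\ref{thm:monotone-lower-smooth} by ``the same arguments used in Theorem~\ref{thm:lower-bound-mono}'': weak sharpness gives $\Ebb[\text{dist}(\bar{x}_K,X^*_F)]\leq \tfrac{1}{\alpha}\Ebb[\gap(\bar{x}_K,F,X)]$, this is combined with the lower bound $\Ebb[\gap(\bar{x}_K,H,X^*_F)]\geq -B_H\Ebb[\text{dist}(\bar{x}_K,X^*_F)]$, and the explicit feasibility bound \eqref{eq:mono-feasib-smooth} is substituted. Your bookkeeping caveat is warranted only insofar as the displayed constants in \eqref{eq:lower-bound-mono-smooth} do not exactly match what \eqref{eq:mono-feasib-smooth} yields (the $K$-exponents in the first bracket and in the denominators track the optimality bound \eqref{eq:mono-optim-smooth} rather than the feasibility bound), which appears to be a transcription slip in the paper rather than a flaw in your argument.
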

\begin{proof}
    The proof uses the same arguments used in Theorem \ref{thm:lower-bound-mono}
\end{proof}

%%%%%%%%%%%%%%%%%%%%%%%%%%%%%%%%%%%%%%%%%%%%%%%%%%%%%%%%%%%%

\newpage

\bibliographystyle{plainnat}
\bibliography{ref.bib}

\end{document}